\theoremstyle{plain}
\newtheorem{theorem}{{Theorem}}[section] 
\newtheorem*{theorem*}{{Theorem}}
\newtheorem{proposition}[theorem]{Proposition}
\newtheorem*{proposition*}{Proposition}
\newtheorem{corollary}[theorem]{Corollary}
\newtheorem*{corollary*}{Corollary}
\newtheorem{lemma}[theorem]{Lemma}
\newtheorem*{lemma*}{Lemma}
\newtheorem*{assumption*}{Assumption}
\newtheorem{definition}[theorem]{Definition}
\newtheorem*{definition*}{Definition}
\theoremstyle{remark}
\newtheorem*{notation*}{Notation}
\newtheorem*{remark*}{Remark}
\newtheorem{remark}{Remark}
\newcommand{\Trace}{\mathrm{Tr}}
\newcommand{\E}{\mathbb{E}}
\newcommand{\V}{\mathbb{V}}
\newcommand{\Cov}{\mathrm{Cov}}
\newcommand{\hproj}[2]{p^{#1}_{(#2)}}
\newcommand{\Card}{\mathrm{Card}}
\def\projecttitle{
    
}
\title{Hoeffding-type decomposition for $U$-statistics on bipartite networks}
\author{Tâm \textsc{Le Minh}, Sophie \textsc{Donnet}, Fran\c{c}ois \textsc{Massol}, and St\'ephane \textsc{Robin}}
\begin{document}

\begin{center}
{\Large
	\textsc {Hoeffding-type decomposition for $U$-statistics on bipartite networks }
}
\bigskip

 T\^am Le Minh $^{1}$ \& Sophie Donnet $^{1}$ \& Fran\c{c}ois Massol $^{2}$ \& St\'ephane Robin $^{3}$
\bigskip

\textit{
$^{1}$ Universit\'e Paris-Saclay, AgroParisTech, INRAE, UMR MIA Paris-Saclay, 91120, Palaiseau, France \\
$^{2}$ Univ. Lille, CNRS, Inserm, CHU Lille, Institut Pasteur de Lille, U1019 - UMR 9017 - CIIL - Center for Infection and Immunity of Lille, F-59000 Lille, France \\
$^{3}$ Sorbonne Universit\'e, CNRS, Laboratoire de Probabilit\'es, Statistique et Mod\'elisation, UMR 8001, 75005 Paris, France
}
\end{center}

\paragraph{Abstract.}
We consider a broad class of random bipartite networks, the distribution of which is invariant under permutation within each type of nodes. We are interested in $U$-statistics defined on the adjacency matrix of such a network, for which we define a new type of Hoeffding decomposition based on the Aldous-Hoover-Kallenberg representation of row-column exchangeable matrices. This decomposition enables us to characterize non-degenerate $U$-statistics -- which are then asymptotically normal -- and provides us with a natural and easy-to-implement estimator of their asymptotic variance. \\
We illustrate the use of this general approach on some typical random graph models and use it to estimate or test some quantities characterizing the topology of the associated network. We also assess the accuracy and the power of the proposed estimates or tests, via a simulation study.

\paragraph{Keywords.} bipartite networks, $U$-statistics, variance estimation, Hoeffding decomposition, row-column exchangeability, Central Limit Theorem

\section{Introduction}

Networks are popular objects to represent a set of interacting entities.  The last decades have witnessed an explosion in the number of network datasets. The fields of application range from sociology to ecology, from economics to computer science.  Understanding the organization of the network is a first step towards a better insight into the system it represents.  Several strategies exist to study or describe the topology of a network. Many of them are based on the calculation of one or several numeric quantities (statistics) such as density, clustering coefficients, or counts of given motifs to name but a few.  These statistics generally rely on several nodes. 

The calculation of these numerical quantities on a given network naturally leads to comparing them to a reference value, or to the value obtained on another network. The concept of hypothesis tests naturally meets this expectation. The challenging step of statistical hypothesis testing is identifying the statistic distribution under the null assumption. In particular, one class of statistics considered is the $U$-statistics which, in the context of network analysis, have complex dependencies.

\paragraph{Networks and dissociated RCE matrices.} In networks, entities are represented by nodes which are linked by edges when they interact. In bipartite networks,  the nodes are divided into two types and the interactions only happen between nodes of the two different types. Some examples of bipartite networks connect users and items in recommender systems \cite{zhou2007bipartite}, papers and scientists in authorship networks \cite{newman2001structure}, or plants and pollinators in ecological interaction networks \cite{dormann2009indices}.  The networks are naturally encoded in matrices.  In the adjacency matrix $Y$ of a bipartite network (sometimes also called incidence matrix), the two types of nodes are represented by rows and columns, so that $Y_{ij}$ encodes the interaction between entity $i$ of the first type and entity $j$ of the second type. In binary networks, $Y_{ij} = 1$ if $i$ and $j$ interact, else $Y_{ij} = 0$. Some networks are weighted, meaning that $Y_{ij}$ represents the intensity of the interaction.

We consider the asymptotic framework where $Y$ is an infinite adjacency matrix and the adjacency matrix of an observed network of size $m \times n$ is the submatrix extracted from the leading $m$ rows and $n$ columns of $Y$. Probabilistic models define a joint distribution on the values of the matrix entries. In random graph models, it is common to assume that the nodes of the networks are exchangeable, i.e. that the distribution of the network does not change if its nodes are permuted. For instance, the stochastic blockmodel \cite{snijders1997estimation}, the random dot product graph model \cite{young2007random} or the latent space model \cite{hoff2002latent} are all node-exchangeable. On the corresponding adjacency matrix of a bipartite network, this assumption implies the row-column exchangeability. $Y$ is said to be row-column exchangeable (RCE) if for any couple $\Phi = (\sigma_1,\sigma_2)$ of finite permutations of $\mathbb{N}$,  
\begin{equation*}
    \Phi Y \overset{\mathcal{D}}{=} Y,
\end{equation*}
where $\Phi Y := (Y_{\sigma_1(i)\sigma_2(j)})_{i \ge 1, j \ge 1}$. Many exchangeable random graph models also have a dissociatedness property, i.e. their adjacency matrices are also dissociated \cite{silverman1976limit, lauritzen2018random}. An RCE matrix is said to be dissociated if for all $m$ and $n$, $(Y_{ij})_{1 \le i \le m, 1 \le j \le n}$ is independent of $(Y_{ij})_{i > m, j > n}$. In the present work, we only consider RCE dissociated matrices.

\paragraph{$U$-statistics and Hoeffding decomposition.} 
$U$-statistics are a generalization of the empirical mean to functions of more than one variable. Many estimators fall under the category of $U$-statistics. Given a sequence of random variables $Y = (Y_i)_{i \ge 1}$ numbered with a unique index a $U$-statistic $U^h_n(Y)$ of order $n$ and kernel function $h$ is defined as the following average
\begin{equation*}
    U^h_n(Y) = \binom{n}{k}^{-1} \sum_{1 \le i_1 < i_2 < ... < i_k \le n} h(Y_{i_1}, Y_{i_2}, ..., Y_{i_k}),
\end{equation*}
where $h : \mathbb{R}^k \rightarrow \mathbb{R}$ is a symmetric function referred to as the kernel. Denote $\llbracket n \rrbracket := \{ 1, ..., n \}$ and for a set $A$, $\mathcal{P}_{k}(A)$ the set of all subsets of cardinal $k$ of $A$. Let $\mathbf{i} = \{i_1, ..., i_k\} \in \mathcal{P}_k(\llbracket n \rrbracket)$, then by symmetry of $h$, $h(Y_{i_1}, ..., Y_{i_k})$ does not depend on the order of the elements of $\mathbf{i}$. Therefore, we will denote $h(Y_{\mathbf{i}}) := h(Y_{i_1}, ..., Y_{i_k})$. Finally, the $U$-statistic $U^h_n(Y)$ can be formulated as follows:
\begin{equation*}
    U^h_n(Y) = \binom{n}{k}^{-1} \sum_{\mathbf{i} \in \mathcal{P}_k(\llbracket n \rrbracket)} h(Y_{\mathbf{i}}).
\end{equation*}
When $Y$ is an exchangeable sequence, $h(Y_{\mathbf{i}})$ has the same distribution for all $\mathbf{i} \in \mathcal{P}_k(\llbracket n \rrbracket)$, therefore $U^h_n(Y)$ is an unbiased estimate of $h(Y_{\llbracket k \rrbracket})$. The case where $Y$ is an i.i.d. sequence is well-studied: the $U$-statistics are known to be asymptotically normal \cite{hoeffding1948class} and can be used for inference tasks such as estimation and hypothesis testing. 

In the i.i.d. case, a useful technique to study the asymptotic behavior of $U$-statistics is the Hoeffding decomposition, formalized for the first time in~\cite{hoeffding1961strong}. For $1 \le c \le k$, define the function $\psi^ch : \mathbb{R}^c \rightarrow \mathbb{R}$ as 
\begin{equation*}
    \psi^ch : (y_1, ..., y_c) \longmapsto \E[h(Y_{1}, ..., Y_{k}) \mid Y_1 = y_{1}, ..., Y_c = y_c].
\end{equation*} 
Again, by symmetry of $h$, for some set $\mathbf{i} \in \mathcal{P}_c(\llbracket n \rrbracket)$, we can denote $\psi^ch(Y_{\mathbf{i}}) := \psi^ch(Y_{i_1}, ..., Y_{i_c})$ since the order of the elements of $\mathbf{i}$ does not matter. Set $p^0h = \E[h(Y_{\llbracket k \rrbracket})]$ and define recursively
\begin{equation*}
    p^ch(Y_{\mathbf{i}}) = \psi^ch(Y_{\mathbf{i}}) - \sum_{c'=0}^{c-1} \sum_{ \mathbf{i}' \in \mathcal{P}_{c'} (\mathbf{i})} p^{c'}h(Y_{\mathbf{i}'}).
\end{equation*}
for all subsets $\mathbf{i} \in \mathcal{P}_c(\llbracket n \rrbracket)$, for all $1 \le c \le k$. Then, for   $\mathbf{i} \in \mathcal{P}_k(\llbracket n \rrbracket)$, $h(Y_{\mathbf{i}})$ can be written
\begin{equation*}
    h(Y_{\mathbf{i}}) = \sum_{0 \le c \le k} \sum_{\mathbf{i}' \in \mathcal{P}_{c}(\mathbf{i})} p^ch(Y_{\mathbf{i}'}).
\end{equation*}
The $U$-statistic $U^h_n$ can be written as
\begin{equation*}
    U^h_n(Y) = \sum_{c=0}^{k} \binom{k}{c} P^c_{n} h(Y),
\end{equation*}
where for $1 \le c \le k$, $P^c_{n}h(Y) = \binom{n}{c}^{-1} \sum_{ \mathbf{i}  \in \mathcal{P}_c(\llbracket n \rrbracket)} p^ch(Y_{\mathbf{i}})$.

This decomposition is interesting as all the quantities $p^ch(Y_{\mathbf{i}})$ are orthogonal. By extension the $U$-statistics $P^c_{n} h$ are also orthogonal. The leading terms of this decomposition have been used by~\cite{hoeffding1948class} to prove the asymptotic normality of $U$-statistics. The decomposition also yields a decomposition of the variance of $U$-statistics.

This framework naturally extends to a network $U$-statistics setting, where the kernel $h$ is defined on submatrices of size $p \times q$. Let $Y$ be an infinite adjacency matrix from which we observe the first $m$ rows and $n$ columns. Let $h : \mathcal{M}_{p,q}(\mathbb{R}) \rightarrow \mathbb{R}$ be a function defined on $p \times q$ matrices, $1 \le p \le m$, $1 \le q \le n$, verifying the following symmetry property: for all $(\sigma_1, \sigma_2) \in \mathbb{S}_p \times \mathbb{S}_q$, 
\begin{equation}\label{eq:sym}
h(Y_{(i_{\sigma_1(1)},...,i_{\sigma_1(p)};j_{\sigma_2(1)},...,j_{\sigma_2(q)})}) = h(Y_{(i_1,i_2,...,i_p;j_1,j_2,...,j_q)}),
\end{equation}
where $Y_{(i_1,...,i_p;j_1,...,j_q)}$ is the $p \times q$ submatrix consisting of the rows and columns of $Y$ indexed by $i_1,...,i_p$ and $j_1,...,j_q$ respectively. Therefore, since the order of the elements of $\mathbf{i} = \{ i_1, ..., i_p \}$ and $\mathbf{j} = \{ j_1, ..., j_q \}$ does not matter, we can denote $h(Y_{\mathbf{i},\mathbf{j}}) := h(Y_{(i_1,i_2,...,i_p;j_1,j_2,...,j_q)})$.
Then the associated $U$-statistic is  
\begin{equation}
    U^h_{m,n}(Y) = \binom{m}{p}^{-1} \binom{n}{q}^{-1} \sum_{\substack{\mathbf{i} \in \mathcal{P}_p(\llbracket m \rrbracket)\\ \mathbf{j} \in \mathcal{P}_q(\llbracket n \rrbracket)}} h(Y_{\mathbf{i},\mathbf{j}}).
    \label{eq:kernel_symmetry}
\end{equation}
Note that the assumption on the symmetry of $h$ can be made without loss of generality. Indeed, if $h^0 : \mathcal{M}_{p,q}(\mathbb{R}) \rightarrow \mathbb{R}$ is not symmetric, then $h : \mathcal{M}_{p,q}(\mathbb{R}) \rightarrow \mathbb{R}$ defined by
\begin{equation}
    h(Y_{(i_1,i_2,...,i_p;j_1,j_2,...,j_q)}) = (p!q!)^{-1} \sum_{(\sigma_1, \sigma_2) \in \mathbb{S}_p \times \mathbb{S}_q} h^0(Y_{(i_{\sigma_1(1)},...,i_{\sigma_1(p)};j_{\sigma_2(1)},...,j_{\sigma_2(q)})})
    \label{eq:kernel_symmetric_version}
\end{equation}
 verifies Equation~\eqref{eq:sym} and leads to the same $U$-statistic ($U^{h^0}_{m,n}(Y) = U^h_{m,n}(Y)$). 

Formally, the network $U$-statistic defined by \eqref{eq:kernel_symmetry} is a weighted $U$-statistic \cite{shapiro1979asymptotic}, and more precisely, an incomplete $U$-statistic \cite{blom1976some}, where the weights are binary and determined by a sampling design. Flattening the matrix $Y$ into a vector in $\mathbb{R}^{m \times n}$, and interpreting $h$ as a function of $p \times q$ variables, clarifies this perspective. The asymptotic properties of weighted $U$-statistics \cite{brown1978reduced, janson1984asymptotic, oneil1993asymptotic} have been studied extensively, although a general distributional characterization has been derived only for the case of linear kernels \cite{bhattacharya2024fluctuations}. Moreover, most existing results rely on the assumption of i.i.d. observations. In contrast, the RCE structure of $Y$ implies a different dependency pattern and suggests a natural sampling design that respects both row and column symmetries.

The asymptotic behavior of network $U$-statistics fundamentally depends on the dependency structure of the data $Y$. Many studies have considered the case where the matrix $Y$ represents an exchangeable network, more precisely through the investigation of subgraph counts \cite{bickel2011method, kaur2021higher, ouadah2022motif, shao2022higher, bhattacharya2023fluctuations}. Some have investigated their asymptotic behavior through the theory of generalized $U$-statistics \cite{janson1991asymptotic}. However, most of them consider \textit{unipartite} networks. These results do not apply to \textit{bipartite} networks, since they have two disjoint sets of nodes which can be of different sizes. \cite{wang2015u} considered bipartite graphs, but the $U$-statistics they have defined are averages over only the pairs of present edges not sharing any node. They do not make assumptions on the distribution of networks, so they do not derive the limit distribution of these $U$-statistics. \cite{davezies2021empirical} investigated averages of separately exchangeable arrays, which corresponds to the network $U$-statistic $U^h_{m,n}$ on $Y$, where $Y$ is an RCE matrix but $h$ is a function of only one variable. Therefore, their results do not directly apply. More closely related, \cite{leminh2023ustatistics} used a martingale approach to obtain a weak convergence result for $U^h_{m,n}$ when $m$ and $n$ grow to infinity at the same rate. Applying this result requires specific development to get the asymptotic variance. 

In this paper, we propose a Hoeffding decomposition-based approach to analyze $U^h_{m,n}$. In our context, the Hoeffding decomposition yields a simpler variance decomposition than for classic generalized $U$-statistics. This strategy also has the advantage of providing a method to estimate the asymptotic variance of network $U$-statistics. Indeed, estimating this variance is a required condition to perform practical inference tasks, such as hypothesis testing. However, it remains a complex problem that has been tackled with various methods in the literature.

\paragraph{Variance estimation of $U$-statistics.}
The standard error of $U$-statistics of i.i.d. random variables is most often computed using resampling techniques such as the jackknife \cite{arvesen1969jackknifing} and bootstrap \cite{efron1979bootstrap, bickel1981some} estimators of variance. \cite{sen1960some, sen1977some, callaert1981order, schucany1989small} suggested various estimators of the asymptotic variance. However, all these estimators are biased for both the variance and the asymptotic variances. \cite{callaert1981order, schucany1989small} also discussed unbiased estimators, but they are computationally more demanding than all the previous estimators and they find them to have a positive probability of being negative, which is undesirable. For $U$-statistics on RCE matrices, no generic variance estimator has been proposed. The bootstrap procedures considered in \cite{davezies2021empirical, menzel2021bootstrap} are proven to be consistent for the sample average of an RCE matrix, but have not been extended to $U$-statistics. Previous studies have estimated the variance by identifying estimable quantities in the problem-specific analytical expressions \cite{leminh2023ustatistics, leminh2024characterization}.

\paragraph{Contribution.}

We show how $U$-statistics of size $p \times q$ can be used for exchangeable network inference. In particular, we propose a Hoeffding-type decomposition to identify the asymptotic distribution of these $U$-statistics. Due to the RCE structure of $Y$, this decomposition uses projections on orthogonal spaces generated by latent variables obtained through the Aldous-Hoover-Kallenberg (AHK) representation. In our asymptotic framework, the total number of rows and columns, denoted by $N$, tends to infinity. To treat this setting, we assume that the matrix dimensions $m_N$ and $n_N$ grow at the same rate, satisfying $N = m_N + n_N$, with $m_N/N \rightarrow \rho$ and $n_N/N \rightarrow 1-\rho$, where $\rho \in (0,1)$. The case in which the dimensions grow at different rates ($\rho = 0$ or $\rho = 1$) is not considered in this paper, although our results and proofs can be generalized to accommodate this scenario. For simplification, we denote $U^h_N(Y) := U^h_{m_N,n_N}(Y)$. First, we show that $\sqrt{N}(U^h_N(Y) - \E[h])$ converges to a Gaussian distribution, except in degenerate cases. Then, for these $U$-statistics, we build a computationally efficient estimator for their variances. This variance estimator relies on the prior estimation of conditional expectations given the AHK latent variables, which makes it unique compared to traditional ways of estimating the variance of $U$-statistics.

\paragraph{Outline.}

In Section~\ref{sec:ahk}, we recall the Aldous-Hoover-Kallenberg (AHK) representation for RCE matrices. In Section~\ref{sec:hoeffding}, we leverage the AHK representation to derive an orthogonal decomposition for $U$-statistics on RCE matrices. Section \ref{sec:asymptotic_normality} exploits this new Hoeffding-type decomposition to establish the asymptotic normality of the $U$-statistics under consideration. In Section~\ref{sec:variance_estimator}, we use this decomposition to construct an efficient estimator for the asymptotic variance. Section~\ref{sec:examples} demonstrates how our results can be applied to define a methodological framework for bipartite network analysis, providing examples of models for RCE matrices and network analysis questions that can be addressed. Sections~\ref{sec:simulations} and~\ref{sec:illustrations} are dedicated to simulation studies and an illustration using a legislature dataset, showcasing this methodology.

\section{Aldous-Hoover-Kallenberg representation}
\label{sec:ahk}
\label{SEC:AHK}

Traditionally, orthogonal decompositions for $U$-statistics on i.i.d. observations \cite{hoeffding1961strong} or generalized $U$-statistics \cite{janson1991asymptotic} are obtained by partitioning the probability space into orthogonal subspaces generated by subsets of observations. However, this approach generally fails for $U$-statistics on RCE matrices, as the dependencies between matrix elements can prevent orthogonality. To ensure orthogonality, the decomposition proposed in this paper uses the Aldous–Hoover–Kallenberg representation of RCE matrices, which expresses the matrix as a function of i.i.d. random variables. Although these variables are unobserved, they provide a natural framework for constructing suitable orthogonal subspaces. Before specifying these subspaces and deriving the decomposition, we first introduce this representation.

\paragraph{Aldous-Hoover-Kallenberg representation.} Corollary 7.23 of~\cite{kallenberg2005probabilistic} states that for any dissociated RCE matrix $Y$, there exists $(\xi_i)_{i \geq 1}$, $(\eta_j)_{j \geq 1}$ and $(\zeta_{ij})_{i,j \geq 1}$ arrays of i.i.d. random variables with uniform distribution over $[0,1]$ and a real measurable function $\phi$ such that for all $1 \le i,j < \infty$, 
\begin{equation*}
    Y_{ij} \overset{a.s.}{=} \phi(\xi_i, \eta_j, \zeta_{ij}).
\end{equation*} 
With such a representation, the kernel function taken on a $p \times q$ submatrix indexed by the rows $\mathbf{i} \in \mathcal{P}_{p}(\mathbb{N})$ and columns $\mathbf{j} \in \mathcal{P}_{q}(\mathbb{N})$ can be written $h(Y_{\mathbf{i}, \mathbf{j}}) \overset{a.s.}{=} h_\phi((\xi_{i})_{i \in \mathbf{i}}; (\eta_{j})_{j \in \mathbf{j}}; (\zeta_{ij})_{\substack{i \in \mathbf{i} \\ j \in \mathbf{j}}})$, where $h_\phi$ is some function depending on $h$ and $\phi$.

\paragraph{Projection sets of AHK variables.} For $\mathbf{i}' \in \mathcal{P}(\mathbb{N})$ and $\mathbf{j}' \in \mathcal{P}(\mathbb{N})$, we define the sets of AHK variables defined as
\begin{equation*}
    A(\mathbf{i}', \mathbf{j}') := ((\xi_{i})_{i \in \mathbf{i}'}, (\eta_{j})_{j \in \mathbf{j}'}, (\zeta_{ij})_{\substack{i \in \mathbf{i}' \\ j \in \mathbf{j}'}}).
\end{equation*}
These sets play a crucial role, as they form the foundation of our $U$-statistic decomposition. More specifically, the decomposition will be defined through projections onto probability spaces generated by these sets of AHK variables. Consequently, we consider the $\sigma$-algebras given by
\begin{equation*}
    \mathcal{A}_{\mathbf{i}', \mathbf{j}'} := \sigma(A(\mathbf{i}', \mathbf{j}')) = \sigma((\xi_{i})_{i \in \mathbf{i}'}, (\eta_{j})_{j \in \mathbf{j}'}, (\zeta_{ij})_{\substack{i \in \mathbf{i}' \\ j \in \mathbf{j}'}}).
\end{equation*}

\paragraph{Notations.} In the rest of the paper, we assume that for each dissociated RCE matrix $Y$, we have picked an AHK representation, i.e. a suitable function $\phi$, and suitable i.i.d. random variables $(\xi_{i})_{i \ge 1}$, $(\eta_{j})_{j \ge 1}$ and $(\zeta_{ij})_{\substack{i \ge 1 \\ j \ge 1}}$. In the rest of the paper, we will write abusively, but without ambiguity, $Y_{ij} = \phi(\xi_i, \eta_j, \zeta_{ij})$ and 
\begin{equation}
  h(Y_{\mathbf{i}, \mathbf{j}}) = h_\phi((\xi_{i})_{i \in \mathbf{i}}; (\eta_{j})_{j \in \mathbf{j}}; (\zeta_{ij})_{\substack{i \in \mathbf{i} \\ j \in \mathbf{j}}}).
  \label{eq:ahk_function}
\end{equation}
Therefore, it follows from our notations that
\begin{equation*}
    \E[h(Y_{\mathbf{i}, \mathbf{j}}) \mid \mathcal{A}_{\mathbf{i}', \mathbf{j}'}] = \E[h_\phi((\xi_{i})_{i \in \mathbf{i}}; (\eta_{j})_{j \in \mathbf{j}}; (\zeta_{ij})_{\substack{i \in \mathbf{i} \\ j \in \mathbf{j}}}) \mid (\xi_{i})_{i \in \mathbf{i}'}; (\eta_{j})_{j \in \mathbf{j}'}; (\zeta_{ij})_{\substack{i \in \mathbf{i}' \\ j \in \mathbf{j}'}}].
\end{equation*}

For fixed sets $\mathbf{i}'$ and $\mathbf{j}'$, the quantity $\E[h(Y_{\mathbf{i}, \mathbf{j}}) \mid \mathcal{A}_{\mathbf{i}', \mathbf{j}'}]$ only depends on the elements shared by $\mathbf{i}$ and $\mathbf{i}'$ and the elements shared by $\mathbf{j}$ and $\mathbf{j}'$, and not on the other elements of $\mathbf{i}$, $\mathbf{i}'$, $\mathbf{j}$ and $\mathbf{j}'$. Suppose $r = \Card(\mathbf{i} \cap \mathbf{i}')$ and $c = \Card(\mathbf{j} \cap \mathbf{j}')$. Without loss of generality, we can assume that $\mathbf{i}' \in \mathcal{P}_r(\mathbf{i})$ and $\mathbf{j}' \in \mathcal{P}_c(\mathbf{j})$ so $\E[h(Y_{\mathbf{i}, \mathbf{j}}) \mid \mathcal{A}_{\mathbf{i}', \mathbf{j}'}]$ only depends on the $r$ elements of $\mathbf{i}'$ and the $c$ elements of $\mathbf{j}'$. Therefore, we can define the quantities $\psi^{r,c} h(Y_{\mathbf{i}', \mathbf{j}'})$ such that 
\begin{equation*}
    \psi^{r,c} h(Y_{\mathbf{i}', \mathbf{j}'}) := \E[h(Y_{\mathbf{i}, \mathbf{j}}) \mid \mathcal{A}_{\mathbf{i}', \mathbf{j}'}],
\end{equation*}
where the choice of $\mathbf{i}$ and $\mathbf{j}$ does not matter as long as $\mathbf{i}' \subset \mathbf{i}$ and $\mathbf{j}' \subset \mathbf{j}$. Note that $\psi^{r,c} h(Y_{\mathbf{i}', \mathbf{j}'})$ is simply a notation and not a function of $Y_{\mathbf{i}', \mathbf{j}'}$. If $\mathbf{i}' = \emptyset$ or $\mathbf{j}' = \emptyset$, we will still use this notation, for example
\begin{equation*}
    \psi^{r,c} h(Y_{\mathbf{i}', \emptyset}) = \E[h(Y_{\mathbf{i}, \mathbf{j}}) \mid \mathcal{A}_{\mathbf{i}', \emptyset}],
\end{equation*}
despite $Y_{\mathbf{i}', \emptyset}$ being undefined.

\section{Decomposition for $U$-statistics on RCE matrices}
\label{sec:hoeffding}
\label{SEC:HOEFFDING}

In this section, we derive a decomposition for $U$-statistics on RCE matrices based on the AHK representation, using the subspaces generated by the specific projection sets that we have defined. This decomposition follows from orthogonal projections on these subspaces. In this section and the following sections, for elements of $\mathbb{N}^2$, $(x,y) \leq (x',y')$ means that both $x \le x'$ and $y \le y'$; $(x,y) < (x',y')$ means that, in addition, $(x,y) \neq (x',y')$.

\paragraph{Hoeffding-type decomposition of the kernel.} 

For all $\mathbf{i} \in \mathcal{P}_{r}(\mathbb{N})$ and $\mathbf{j} \in \mathcal{P}_{c}(\mathbb{N})$, we define by recursion the following quantity:
\begin{equation}
    p^{r,c}h(Y_{\mathbf{i}, \mathbf{j}}) = \psi^{r,c} h(Y_{\mathbf{i}, \mathbf{j}}) - \sum_{(0,0) \le (r',c') < (r,c)} \sum_{\substack{\mathbf{i}' \in \mathcal{P}_{r'}(\mathbf{i}) \\ \mathbf{j}' \in \mathcal{P}_{c'}(\mathbf{j})}} p^{r',c'}h(Y_{\mathbf{i}', \mathbf{j}'}).
    \label{eq:def_projection}
\end{equation}
Since $\psi^{p,q} h(Y_{\mathbf{i}, \mathbf{j}}) = h(Y_{\mathbf{i}, \mathbf{j}})$ for $\mathbf{i} \in \mathcal{P}_{p}(\mathbb{N})$ and $\mathbf{j} \in \mathcal{P}_{q}(\mathbb{N})$,~\eqref{eq:def_projection} yields the following decomposition of the kernel function:
\begin{equation}\label{eq:def_projection_U}
    h(Y_{\mathbf{i}, \mathbf{j}}) = \sum_{(0,0) \le (r,c) \le (p,q)} \sum_{\substack{\mathbf{i}' \in \mathcal{P}_{r}(\mathbf{i}) \\ \mathbf{j}' \in \mathcal{P}_{c}(\mathbf{j})}} p^{r,c} h(Y_{\mathbf{i}', \mathbf{j}'}).
\end{equation}

\begin{remark}
    From this formula, we see that $h(Y_{\mathbf{i}, \mathbf{j}})$ is a linear combination of the projections $p^{r',c'}h(Y_{\mathbf{i}', \mathbf{j}'})$, for $0 \le r' \le p, 0 \le c' \le q, \mathbf{i}' \in \mathcal{P}_{r'}(\mathbf{i}), \mathbf{j}' \in \mathcal{P}_{c'}(\mathbf{j})$. Therefore, this is a linear combination of the respective conditional expectations $\psi^{r',c'} h(Y_{\mathbf{i}', \mathbf{j}'})$.
\end{remark}

\begin{remark}
    In practice, the AHK variables $(\xi_{i})_{i \ge 1}$, $(\eta_{j})_{j \ge 1}$ and $(\zeta_{ij})_{\substack{i \ge 1 \\ j \ge 1}}$ are usually unobserved. However, the existence of such variables is sufficient to define the conditional expectations of the form $\psi^{r,c} h(Y_{\mathbf{i}, \mathbf{j}})$.
\end{remark}

Now, we show that $p^{r,c}h(Y_{\mathbf{i}, \mathbf{j}})$ is the projection of $Y_{\mathbf{i}, \mathbf{j}}$ on the probability space generated by the projection set of AHK variables $A(\mathbf{i}, \mathbf{j})$, orthogonally to all the spaces generated by the sets $A(\mathbf{i}', \mathbf{j}')$, for $\mathbf{i}' \subset \mathbf{i}, \mathbf{j}' \subset \mathbf{j}$. This system of projection is analogous to the Hoeffding decomposition for the kernel functions of usual $U$-statistics on i.i.d. data. The next proposition, proven in Appendix~\ref{app:hoeffding}, states that the following properties hold, ensuring the orthogonality of the projection spaces generated by $A(\mathbf{i}_1, \mathbf{j}_1)$ and $A(\mathbf{i}_2, \mathbf{j}_2)$, if $(\mathbf{i}_1, \mathbf{j}_1) \neq (\mathbf{i}_2, \mathbf{j}_2)$.

\begin{proposition}
    Let $h_1$ and $h_2$ two kernel functions of respective size $p_1 \times q_1$ and $p_2 \times q_2$.
    \begin{enumerate}
    \item Let $(0,0) \le (r_1,c_1) \le (p_1,q_1) $ and $ (0,0) \le (r_2,c_2) \le (p_2,q_2)$ such that $(r_1,c_1) \neq (r_2,c_2)$. Let $(\mathbf{i}_1,\mathbf{j}_1) \in \mathcal{P}_{r_1}(\llbracket m \rrbracket) \times \mathcal{P}_{c_1}(\llbracket n \rrbracket)$ and $(\mathbf{i}_2,\mathbf{j}_2) \in \mathcal{P}_{r_2}(\llbracket m \rrbracket) \times \mathcal{P}_{c_2}(\llbracket n \rrbracket)$, then 
    \begin{equation*}
        \Cov(p^{r_1,c_1}h_1(Y_{\mathbf{i}_1,\mathbf{j}_1}), p^{r_2,c_2}h_2(Y_{\mathbf{i}_2,\mathbf{j}_2})) = 0.
    \end{equation*}
    \item Let $(r,c)$ such that $(0,0) \le (r,c) \le (p_1,q_1)$ and $(0,0) \le (r,c) \le (p_2,q_2)$. Let $(\mathbf{i}_1,\mathbf{j}_1)$ and $(\mathbf{i}_2,\mathbf{j}_2)$ two elements of $\mathcal{P}_r(\llbracket m \rrbracket) \times \mathcal{P}_c(\llbracket n \rrbracket)$. If $(\mathbf{i}_1,\mathbf{j}_1) \neq (\mathbf{i}_2,\mathbf{j}_2)$, then
    \begin{equation*}
        \Cov(p^{r,c}h_1(Y_{\mathbf{i}_1,\mathbf{j}_1}), p^{r,c}h_2(Y_{\mathbf{i}_2,\mathbf{j}_2})) = 0.
    \end{equation*}
    \end{enumerate}
    \label{prop:ortho_proj}
\end{proposition}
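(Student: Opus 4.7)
The two orthogonality statements are both consequences of a single auxiliary fact which I would establish first: for every kernel $h$, every $(\mathbf{i}, \mathbf{j})$ with $|\mathbf{i}| = r$ and $|\mathbf{j}| = c$, and every $(\mathbf{i}', \mathbf{j}')$ such that $\mathbf{i} \not\subset \mathbf{i}'$ or $\mathbf{j} \not\subset \mathbf{j}'$, one has $\E[p^{r,c}h(Y_{\mathbf{i}, \mathbf{j}}) \mid \mathcal{A}_{\mathbf{i}', \mathbf{j}'}] = 0$. Informally, the projection is orthogonal to every conditioning $\sigma$-algebra that fails to contain all rows or all columns of its generating block $(\mathbf{i}, \mathbf{j})$.

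I would prove this by induction on $(r,c)$ in the product order, the case $(0,0)$ being vacuous since $\emptyset \subset \mathbf{i}'$ and $\emptyset \subset \mathbf{j}'$ always hold. For the inductive step, one expands $p^{r,c}h(Y_{\mathbf{i},\mathbf{j}})$ using \eqref{eq:def_projection} and takes the conditional expectation given $\mathcal{A}_{\mathbf{i}', \mathbf{j}'}$. Setting $r'' = |\mathbf{i} \cap \mathbf{i}'|$ and $c'' = |\mathbf{j} \cap \mathbf{j}'|$, one has $(r'',c'') < (r,c)$ by hypothesis. Each correction term $p^{r',c'}h(Y_{\mathbf{i}'',\mathbf{j}''})$ with $\mathbf{i}'' \subset \mathbf{i}$ and $\mathbf{j}'' \subset \mathbf{j}$ either satisfies $\mathbf{i}'' \subset \mathbf{i}\cap\mathbf{i}'$ and $\mathbf{j}'' \subset \mathbf{j}\cap\mathbf{j}'$, in which case it is $\mathcal{A}_{\mathbf{i}', \mathbf{j}'}$-measurable and survives the conditional expectation unchanged, or else vanishes by the inductive hypothesis. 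The surviving terms aggregate into precisely the right-hand side of \eqref{eq:def_projection} applied at size $(r'',c'')$ to the block $(\mathbf{i}\cap\mathbf{i}', \mathbf{j}\cap\mathbf{j}')$, so they sum to $\psi^{r'',c''}h(Y_{\mathbf{i}\cap\mathbf{i}', \mathbf{j}\cap\mathbf{j}'})$. The remaining ingredient is the identity $\E[\psi^{r,c}h(Y_{\mathbf{i},\mathbf{j}}) \mid \mathcal{A}_{\mathbf{i}',\mathbf{j}'}] = \psi^{r'',c''}h(Y_{\mathbf{i}\cap\mathbf{i}', \mathbf{j}\cap\mathbf{j}'})$, which cancels the surviving sum and delivers the claim.

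Given the auxiliary fact, both parts of the proposition follow from the tower property. For Part 1, $(r_1,c_1) \neq (r_2,c_2)$ means that one of the two pairs strictly exceeds the other in at least one coordinate; after swapping $h_1$ and $h_2$ if necessary, assume $r_1 > r_2$ or $c_1 > c_2$. Then $\mathbf{i}_1 \not\subset \mathbf{i}_2$ or $\mathbf{j}_1 \not\subset \mathbf{j}_2$ on cardinality grounds, and conditioning the product $\E[p^{r_1,c_1}h_1(Y_{\mathbf{i}_1,\mathbf{j}_1}) \, p^{r_2,c_2}h_2(Y_{\mathbf{i}_2,\mathbf{j}_2})]$ on $\mathcal{A}_{\mathbf{i}_2,\mathbf{j}_2}$ makes the inner expectation vanish by the auxiliary fact. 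The individual means $\E[p^{r_i,c_i}h_i]$ also vanish when $(r_i,c_i) \neq (0,0)$ (applying the auxiliary fact with $(\mathbf{i}',\mathbf{j}') = (\emptyset, \emptyset)$), while the $(0,0)$ case is trivial since then one factor is constant, so the covariance is zero. For Part 2, equal cardinalities together with $(\mathbf{i}_1,\mathbf{j}_1) \neq (\mathbf{i}_2,\mathbf{j}_2)$ force $\mathbf{i}_1 \not\subset \mathbf{i}_2$ or $\mathbf{j}_1 \not\subset \mathbf{j}_2$, and the same conditioning argument applies verbatim.

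The main obstacle is the structural identity $\E[\psi^{r,c}h(Y_{\mathbf{i},\mathbf{j}}) \mid \mathcal{A}_{\mathbf{i}',\mathbf{j}'}] = \psi^{r'',c''}h(Y_{\mathbf{i}\cap\mathbf{i}', \mathbf{j}\cap\mathbf{j}'})$. Since $\mathcal{A}_{\mathbf{i}',\mathbf{j}'}$ is neither contained in nor contains $\mathcal{A}_{\mathbf{i},\mathbf{j}}$, the tower property does not apply directly, and one must work at the level of the underlying AHK variables. The trick is to represent $\psi^{r,c}h(Y_{\mathbf{i},\mathbf{j}}) = \E[h(Y_{\mathbf{i}_+, \mathbf{j}_+}) \mid \mathcal{A}_{\mathbf{i},\mathbf{j}}]$ with $\mathbf{i}_+ \supset \mathbf{i}$ of size $p$ and $\mathbf{j}_+ \supset \mathbf{j}$ of size $q$, chosen so that $\mathbf{i}_+ \setminus \mathbf{i}$ is disjoint from $\mathbf{i}'$ and $\mathbf{j}_+ \setminus \mathbf{j}$ is disjoint from $\mathbf{j}'$ (possible since $\mathbf{i}'$ and $\mathbf{j}'$ are finite and $\mathbb{N}$ is infinite). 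Mutual independence of the AHK variables then forces both $\E[h(Y_{\mathbf{i}_+,\mathbf{j}_+}) \mid \mathcal{A}_{\mathbf{i}',\mathbf{j}'}]$ and $\E[\psi^{r,c}h(Y_{\mathbf{i},\mathbf{j}}) \mid \mathcal{A}_{\mathbf{i}',\mathbf{j}'}]$ to reduce to the same $\E[\cdot \mid \mathcal{A}_{\mathbf{i}\cap\mathbf{i}', \mathbf{j}\cap\mathbf{j}'}]$, after which the identity is just the definition of $\psi^{r'',c''}h$. Everything else is clean bookkeeping.
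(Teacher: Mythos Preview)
Your proof is correct and follows the same strategy as the paper: establish by induction on $(r,c)$ a conditional-centering lemma for $p^{r,c}h$ (the paper's Lemma~\ref{lem:proj_smaller_cond_exp}), then derive both orthogonality statements by conditioning on a suitable $\mathcal{A}_{\cdot,\cdot}$ and applying the tower property. Your auxiliary fact is phrased for arbitrary conditioning sets $(\mathbf{i}',\mathbf{j}')$ rather than only sub-blocks $\underline{\mathbf{i}}\subset\mathbf{i}$, $\underline{\mathbf{j}}\subset\mathbf{j}$, but the two versions are equivalent once one notes---as you correctly do---that conditioning an $\mathcal{A}_{\mathbf{i},\mathbf{j}}$-measurable variable on $\mathcal{A}_{\mathbf{i}',\mathbf{j}'}$ reduces to conditioning on $\mathcal{A}_{\mathbf{i}\cap\mathbf{i}',\mathbf{j}\cap\mathbf{j}'}$ by mutual independence of the AHK variables; your final conditioning on $\mathcal{A}_{\mathbf{i}_2,\mathbf{j}_2}$ is then a clean way to finish.
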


\paragraph{Decomposition of $U$-statistics.} Using the Hoeffding-type decomposition of kernel functions \eqref{eq:def_projection_U} the $U$-statistic \eqref{eq:kernel_symmetry} can be reformulated as:
\begin{align}
    U^h_{m,n}(Y) &= \binom{m}{p}^{-1} \binom{n}{q}^{-1} \sum_{\substack{1 \le i_1 < ... < i_p \le m\\1 \le j_1 < ... < j_q \le n}} \sum_{(0,0) \le (r,c) \le (p,q)} \sum_{\substack{\mathbf{i} \in \mathcal{P}_{r}(\{i_1,...,i_p\}) \\ \mathbf{j} \in \mathcal{P}_{c}(\{j_1,...,j_q\})}} p^{r,c} h(Y_{\mathbf{i}, \mathbf{j}}) \nonumber \\
    &= \binom{m}{p}^{-1} \binom{n}{q}^{-1} \sum_{(0,0) \le (r,c) \le (p,q)} \binom{m-r}{p-r} \binom{n-c}{q-c} \sum_{\substack{\mathbf{i} \in \mathcal{P}_{r}(\llbracket m \rrbracket) \\ \mathbf{j} \in \mathcal{P}_{c}(\llbracket n \rrbracket)}} p^{r,c} h(Y_{\mathbf{i}, \mathbf{j}}) \nonumber \\
    &= \sum_{(0,0) \le (r,c) \le (p,q)} \binom{p}{r} \binom{q}{c} P^{r,c}_{m,n}h(Y), \label{eq:decomposition_ustat}
\end{align}
where for all $0 \le r \le p$ and $0 \le c \le q$, $$P^{r,c}_{m,n}h(Y) = \binom{m}{r}^{-1}\binom{n}{c}^{-1} \sum_{\substack{\mathbf{i} \in \mathcal{P}_{r}(\llbracket m \rrbracket) \\ \mathbf{j} \in \mathcal{P}_{c}(\llbracket n \rrbracket)}} p^{r,c} h(Y_{\mathbf{i}, \mathbf{j}})$$ is the $U$-statistic of kernel function $p^{r,c} h$ taken on the first $m \times n$ rows and columns of the matrix $Y$. A consequence of the orthogonality of the projections of $h$ is the orthogonality of these $U$-statistics, as stated by Corollary~\ref{cor:ortho_ustats} provided in Appendix~\ref{app:hoeffding}. The orthogonality between the $P^{r_1,c_1}_{m,n}h_1(Y)$ and $P^{r_2,c_2}_{m,n}h_2(Y)$ allows to decompose the covariance of two $U$-statistics into a few covariance terms, as specified by Corollary~\ref{cor:cov_ustats}. 

\begin{corollary}
   \begin{equation*}
   \begin{split}
       &\Cov(U^{h_1}_{m,n}(Y), U^{h_2}_{m,n}(Y)) \\
       &= \sum_{(0,0) < (r,c) \le (p,q)} \binom{p}{r}^2 \binom{q}{c}^2 \binom{m}{r}^{-1} \binom{n}{c}^{-1} \Cov(p^{r,c}h_1(Y_{\llbracket r \rrbracket,\llbracket c \rrbracket}), p^{r,c}h_2(Y_{\llbracket r \rrbracket,\llbracket c \rrbracket})).
   \end{split}
   \end{equation*}
    \label{cor:cov_ustats}
\end{corollary}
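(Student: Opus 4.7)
The plan is to chain the bilinear expansion of the covariance with the two orthogonality statements already at hand. First, apply the Hoeffding-type decomposition \eqref{eq:decomposition_ustat} to both $U^{h_1}_{m,n}(Y)$ and $U^{h_2}_{m,n}(Y)$, writing each as a finite linear combination of the $P^{r,c}_{m,n} h_i(Y)$ with scalar coefficients $\binom{p}{r}\binom{q}{c}$. Expanding $\Cov(U^{h_1}_{m,n}(Y), U^{h_2}_{m,n}(Y))$ bilinearly produces a double sum over pairs $((r_1,c_1),(r_2,c_2))$ of covariances $\Cov(P^{r_1,c_1}_{m,n} h_1, P^{r_2,c_2}_{m,n} h_2)$. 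By Corollary~\ref{cor:ortho_ustats}, every off-diagonal term with $(r_1,c_1) \neq (r_2,c_2)$ vanishes, collapsing the sum to its diagonal $(r_1,c_1)=(r_2,c_2)=(r,c)$. The $(r,c)=(0,0)$ contribution also drops, since $p^{0,0} h_i$ is the constant $\E[h_i(Y_{\llbracket p \rrbracket,\llbracket q \rrbracket})]$ and thus has zero covariance with anything.

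It remains to compute $\Cov(P^{r,c}_{m,n} h_1, P^{r,c}_{m,n} h_2)$ for $(0,0) < (r,c) \le (p,q)$. Substituting the definition of $P^{r,c}_{m,n} h_i$ and expanding the covariance bilinearly yields
\begin{equation*}
\Cov(P^{r,c}_{m,n} h_1, P^{r,c}_{m,n} h_2) = \binom{m}{r}^{-2}\binom{n}{c}^{-2} \sum_{\substack{(\mathbf{i}_1, \mathbf{j}_1),(\mathbf{i}_2, \mathbf{j}_2) \\ \in \mathcal{P}_r(\llbracket m \rrbracket)\times\mathcal{P}_c(\llbracket n \rrbracket)}} \Cov\bigl(p^{r,c} h_1(Y_{\mathbf{i}_1,\mathbf{j}_1}), p^{r,c} h_2(Y_{\mathbf{i}_2,\mathbf{j}_2})\bigr).
\end{equation*}
Part~2 of Proposition~\ref{prop:ortho_proj} eliminates every term with $(\mathbf{i}_1,\mathbf{j}_1) \neq (\mathbf{i}_2,\mathbf{j}_2)$, leaving only the $\binom{m}{r}\binom{n}{c}$ diagonal terms. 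RCE exchangeability of $Y$ then identifies each diagonal term with $\Cov(p^{r,c} h_1(Y_{\llbracket r \rrbracket,\llbracket c \rrbracket}), p^{r,c} h_2(Y_{\llbracket r \rrbracket,\llbracket c \rrbracket}))$, and the binomial prefactors collapse to $\binom{m}{r}^{-1}\binom{n}{c}^{-1}$. Feeding this back into the diagonal $(r,c)$ sum, along with the coefficients $\binom{p}{r}^2\binom{q}{c}^2$ inherited from \eqref{eq:decomposition_ustat}, yields the claimed identity.

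I do not anticipate any serious obstacle: the genuine mathematical content (orthogonality of the different projection spaces, both across $(r,c)$ indices and across index sets of the same size) is already encapsulated in Proposition~\ref{prop:ortho_proj} and Corollary~\ref{cor:ortho_ustats}. What remains is essentially combinatorial bookkeeping. The one step deserving care is the appeal to exchangeability inside the covariance of two different kernels: one must argue that the joint law of $(p^{r,c} h_1(Y_{\mathbf{i},\mathbf{j}}), p^{r,c} h_2(Y_{\mathbf{i},\mathbf{j}}))$ does not depend on $(\mathbf{i},\mathbf{j}) \in \mathcal{P}_r(\llbracket m \rrbracket) \times \mathcal{P}_c(\llbracket n \rrbracket)$. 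This follows from the fact that each $p^{r,c} h_i(Y_{\mathbf{i},\mathbf{j}})$ is, by construction, a measurable function of $\mathcal{A}_{\mathbf{i},\mathbf{j}}$, and the RCE property transports the joint law of such measurable functions along any simultaneous row-column permutation acting on the same $(\mathbf{i},\mathbf{j})$.
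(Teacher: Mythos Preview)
Your proof is correct and follows essentially the same route as the paper: expand the covariance bilinearly via the decomposition~\eqref{eq:decomposition_ustat}, kill the off-diagonal $(r_1,c_1)\neq(r_2,c_2)$ terms, and reduce the diagonal terms using Proposition~\ref{prop:ortho_proj} and exchangeability. The only (harmless) redundancy is that you invoke Corollary~\ref{cor:ortho_ustats} for part~1 but then re-derive its part~2 from Proposition~\ref{prop:ortho_proj}; the paper simply states that Corollary~\ref{cor:cov_ustats} is a direct consequence of both parts of Corollary~\ref{cor:ortho_ustats}.
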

This variance decomposition is particularly noteworthy and useful for analyzing the asymptotic properties of $U$-statistics. In contrast, the classic decomposition of (unipartite) graph $U$-statistics is typically more complex \cite{janson1991asymptotic}. Importantly, with Corollary~\ref{cor:cov_ustats}, each term in the decomposition~\eqref{eq:decomposition_ustat} has a distinct asymptotic order, providing a general understanding of variance contributions. In this regard, our decomposition more closely resembles the traditional decomposition of $U$-statistics on i.i.d. observations, where each covariance term is associated with a distinct binomial coefficient that determines its order of magnitude, rather than the decomposition in \cite{janson1991asymptotic}. In addition, the bipartite setting uniquely allows for separate contributions from the rows and the columns of observations. Later, the simplicity of this decomposition is exploited to introduce a novel variance estimator for these $U$-statistics.

\section{Asymptotic normality}
\label{sec:asymptotic_normality}
\label{SEC:ASYMPTOTIC_NORMALITY}

This section establishes the following Central Limit Theorem for $U^h_N$. In this section and the following sections, we will use simplified notations, summarizing the couple $(m_N, n_N)$ into $N = m_N + n_N$. We recall that $U^h_N(Y) = U^h_{m_N,n_N}(Y)$. We also denote $P^{r,c}_{N}h(Y) := P^{r,c}_{m_N,n_N}h(Y)$. When this is unambiguous, we will omit to mention $Y$, so we will simply write $U^h_N$, $P^{r,c}_N h$, $\hproj{r,c}{\mathbf{i}, \mathbf{j}}h$ and $\psi^{r,c}_{(\mathbf{i},\mathbf{j})} h$ instead of $U^h_N(Y)$, $P^{r,c}_N h(Y)$, $p^{r,c}h(Y_{\mathbf{i}, \mathbf{j}})$ and $\psi^{r,c} h(Y_{\mathbf{i},\mathbf{j}})$. Denote $v^{r,c}_h := \V[\psi^{r,c}_{(\llbracket r \rrbracket, \llbracket c \rrbracket)} h]$. 

\begin{theorem}
    Let $Y$ be a dissociated RCE matrix. Let $h$ be a $p \times q$ kernel function such that $\E[h(Y_{(1,...,p;1,...,q)})^2] < \infty$. Let $(m_N, n_N)_{N \ge 1}$ be a sequence of dimensions for the $U$-statistics, such that $\frac{m_N}{N} \xrightarrow[N \rightarrow \infty]{} \rho$ and $\frac{n_N}{N} \xrightarrow[N \rightarrow \infty]{} 1-\rho$, where $\rho \in (0,1)$. Let $(U^h_{N})_{N \ge 1}$ be the sequence of $U$-statistics associated with $h$ defined by $U^h_N := U^h_{m_N, n_N}$. Set $U^h_\infty = \mathbb{E}[h(Y_{(1,...,p;1,...,q)})]$ and 
    \begin{equation*}
    \begin{split}
        V^h =&~ \frac{p^2}{\rho}v^{1,0}_h + \frac{q^2}{1-\rho} v^{0,1}_h.
    \end{split}
    \end{equation*}
    If $V^h > 0$, then
    \begin{equation*}
        \sqrt{N}(U^h_{N}-U^h_\infty) \xrightarrow[N \rightarrow \infty]{\mathcal{D}} \mathcal{N}(0, V^h).
    \end{equation*}
    \label{th:asymptotic_normality}
\end{theorem}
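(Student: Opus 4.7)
The plan is to exploit the Hoeffding-type decomposition \eqref{eq:decomposition_ustat} and the variance formula of Corollary~\ref{cor:cov_ustats} to split $U^h_N - U^h_\infty$ into a leading $O_{L^2}(N^{-1/2})$ part, which is a sum of independent random variables amenable to the classical CLT, and a remainder of order $O_{L^2}(N^{-1})$ that vanishes after rescaling by $\sqrt{N}$. Concretely, noting that $P^{0,0}_N h = U^h_\infty$, I would rewrite
\begin{equation*}
    U^h_N - U^h_\infty \;=\; p\,P^{1,0}_N h \;+\; q\,P^{0,1}_N h \;+\; R_N,
    \qquad
    R_N := \sum_{\substack{(0,0) < (r,c) \le (p,q) \\ r+c \ge 2}} \binom{p}{r}\binom{q}{c}\,P^{r,c}_N h.
\end{equation*}

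First, I would identify the two leading projections. By definition $\psi^{1,0}h(Y_{\{i\},\emptyset}) = \mathbb{E}[h(Y_{\llbracket p\rrbracket,\llbracket q\rrbracket}) \mid \xi_i]$ and the recursion \eqref{eq:def_projection} gives $p^{1,0}h(Y_{\{i\},\emptyset}) = \psi^{1,0}h(Y_{\{i\},\emptyset}) - \mathbb{E}[h]$, which is a $\sigma(\xi_i)$-measurable, centered random variable with variance $v^{1,0}_h$. Hence $\{p^{1,0}h(Y_{\{i\},\emptyset})\}_{i \ge 1}$ is an i.i.d.\ sequence, and similarly $\{p^{0,1}h(Y_{\emptyset,\{j\}})\}_{j \ge 1}$ is i.i.d., measurable w.r.t.\ $\sigma(\eta_j)$, centered, with variance $v^{0,1}_h$. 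Crucially, the two collections live on disjoint blocks of AHK variables, hence are \emph{independent}, not merely uncorrelated. The classical univariate CLT yields
\begin{equation*}
    \sqrt{m_N}\,P^{1,0}_N h \xrightarrow{\mathcal{D}} \mathcal{N}(0,v^{1,0}_h),
    \qquad
    \sqrt{n_N}\,P^{0,1}_N h \xrightarrow{\mathcal{D}} \mathcal{N}(0,v^{0,1}_h),
\end{equation*}
and using $m_N/N \to \rho$, $n_N/N \to 1-\rho$ together with independence I would deduce, by taking the sum of two independent Gaussian limits,
\begin{equation*}
    \sqrt{N}\bigl(p\,P^{1,0}_N h + q\,P^{0,1}_N h\bigr) \xrightarrow{\mathcal{D}} \mathcal{N}\!\Bigl(0,\tfrac{p^2}{\rho}v^{1,0}_h + \tfrac{q^2}{1-\rho}v^{0,1}_h\Bigr) = \mathcal{N}(0,V^h).
\end{equation*}

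Next I would control $R_N$ via the $L^2$ bound supplied by Corollary~\ref{cor:cov_ustats} applied with $h_1=h_2=h$: each surviving covariance term equals $v^{r,c}_h$ (which is finite since $\mathbb{E}[h^2]<\infty$ implies $v^{r,c}_h \le \mathbb{E}[h^2]$ by Jensen) multiplied by $\binom{m_N}{r}^{-1}\binom{n_N}{c}^{-1}=O(N^{-(r+c)})$. Restricting to $r+c\ge 2$ this gives $\mathbb{V}[R_N]=O(N^{-2})$, whence $\sqrt{N}\,R_N \to 0$ in $L^2$ and therefore in probability. Slutsky's lemma then combines the two pieces to conclude $\sqrt{N}(U^h_N-U^h_\infty)\xrightarrow{\mathcal{D}}\mathcal{N}(0,V^h)$ whenever $V^h>0$.

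The only mildly delicate point is the joint step: making precise that $p\,P^{1,0}_Nh$ and $q\,P^{0,1}_Nh$ are actually independent (not just orthogonal), which is immediate once one observes that they are respectively functions of $(\xi_i)_{i \le m_N}$ and $(\eta_j)_{j \le n_N}$ under the chosen AHK representation. Everything else is a direct bookkeeping using the already-established orthogonality of the projections and the magnitudes of the binomial factors $\binom{m_N}{r}^{-1}\binom{n_N}{c}^{-1}$.
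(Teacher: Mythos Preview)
Your proposal is correct and follows essentially the same route as the paper: decompose via \eqref{eq:decomposition_ustat}, apply the classical CLT to the i.i.d.\ first-order projections (independent because they are functions of disjoint AHK variables), bound the remainder in $L^2$ using Corollary~\ref{cor:cov_ustats}, and conclude by Slutsky. One small notational slip: the variance terms appearing in Corollary~\ref{cor:cov_ustats} are $\V[p^{r,c}h]$ rather than $v^{r,c}_h = \V[\psi^{r,c}h]$, but your finiteness bound via Jensen still applies and the argument goes through unchanged.
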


This theorem comes from the decomposition of $\sqrt{N}(U^h_{N}-U^h_\infty)$ into three different terms, the limits of which are given by the following lemmas (proofs in Appendix~\ref{app:asymptotic_normality}).

\begin{lemma} 
If $v^{1,0}_h>0$, then we have
    \begin{equation*}
        \frac{1}{\sqrt{m}} \sum_{i = 1}^{m} \hproj{1,0}{\{i\},\emptyset}h \xrightarrow[m \rightarrow \infty]{\mathcal{D}} \mathcal{N}(0, v^{1,0}_h),
    \end{equation*}
    and if $v^{0,1}_h>0$, then we have
    \begin{equation*}
        \frac{1}{\sqrt{n}} \sum_{j = 1}^{n} \hproj{0,1}{\emptyset, \{j\}}h \xrightarrow[n \rightarrow \infty]{\mathcal{D}} \mathcal{N}(0, v^{0,1}_h).
    \end{equation*}
    \label{lem:asymptotic_normality_of_main_terms}
\end{lemma}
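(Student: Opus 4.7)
The plan is to recognize that $(\hproj{1,0}{\{i\},\emptyset}h)_{i \ge 1}$ is an i.i.d. sequence of centered, square-integrable random variables, so that the classical Lindeberg--Lévy CLT applies directly; an identical argument with rows and columns swapped handles the second assertion.

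First I would unfold the recursion \eqref{eq:def_projection} at level $(1,0)$. The only index strictly below $(1,0)$ in the partial order is $(0,0)$, and $\mathcal{P}_0(\{i\}) = \mathcal{P}_0(\emptyset) = \{\emptyset\}$, so
\begin{equation*}
\hproj{1,0}{\{i\},\emptyset}h
= \psi^{1,0}_{(\{i\},\emptyset)} h - \hproj{0,0}{\emptyset,\emptyset}h
= \E[h(Y_{\llbracket p \rrbracket, \llbracket q \rrbracket}) \mid \xi_i] - \E[h(Y_{\llbracket p \rrbracket, \llbracket q \rrbracket})].
\end{equation*}
Using the AHK representation \eqref{eq:ahk_function} together with the independence of $(\xi_i)_{i \ge 1}$ from $(\eta_j)_{j \ge 1}$ and $(\zeta_{ij})_{i,j \ge 1}$, the conditional expectation can be written as $g(\xi_i)$ for a fixed measurable function $g:[0,1]\to\mathbb{R}$ that does not depend on $i$. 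Hence each summand is a centered measurable function of $\xi_i$ alone, and since $(\xi_i)_{i \ge 1}$ is i.i.d., so is $(\hproj{1,0}{\{i\},\emptyset}h)_{i \ge 1}$.

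Next, I need to verify square-integrability. By conditional Jensen and the hypothesis $\E[h(Y_{\llbracket p \rrbracket, \llbracket q \rrbracket})^2]<\infty$ inherited from Theorem~\ref{th:asymptotic_normality},
\begin{equation*}
\E[(\psi^{1,0}_{(\{1\}, \emptyset)} h)^2]
\le \E\bigl[\E[h(Y_{\llbracket p \rrbracket, \llbracket q \rrbracket})^2 \mid \xi_1]\bigr]
= \E[h(Y_{\llbracket p \rrbracket, \llbracket q \rrbracket})^2] < \infty,
\end{equation*}
so $v^{1,0}_h \in (0,\infty)$ under the standing assumption $v^{1,0}_h>0$. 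Applying the Lindeberg--Lévy CLT to the i.i.d. centered sequence with common variance exactly $v^{1,0}_h$ yields the first convergence. The second is obtained by symmetry: $\hproj{0,1}{\emptyset,\{j\}}h = \E[h \mid \eta_j] - \E[h]$ is a centered $\sigma(\eta_j)$-measurable function with finite positive variance $v^{0,1}_h$, and the $(\eta_j)_{j \ge 1}$ are i.i.d.

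There is essentially no technical obstacle in this lemma: its entire content is the structural observation that in the AHK-based Hoeffding decomposition, the $(1,0)$-projection collapses to a function of the single row-latent $\xi_i$, which reduces the sum to an ordinary one-dimensional i.i.d. sum. The real work of Theorem~\ref{th:asymptotic_normality} lies not here but in controlling the remaining higher-order projection terms $P^{r,c}_N h$ with $(r,c)>(1,0)$ or $(r,c)>(0,1)$, which presumably occupies the subsequent lemmas.
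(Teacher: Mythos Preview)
Your proof is correct and follows essentially the same route as the paper: identify $\hproj{1,0}{\{i\},\emptyset}h$ as $\E[h\mid\xi_i]-\E[h]$, observe this is an i.i.d.\ centered sequence in $\xi_i$ with finite variance, and apply the classical CLT. The only cosmetic difference is that the paper bounds the variance via the covariance identity $v^{1,0}_h=\Cov(h(Y_{(1,\dots,p;1,\dots,q)}),h(Y_{(1,p+1,\dots,2p-1;1,\dots,q)}))$ and Cauchy--Schwarz, whereas you use conditional Jensen directly; both are valid.
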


\begin{lemma}
    Let $A_N := \sqrt{N} \sum_{\substack{(0,0) < (r,c) \le (p,q) \\ (r,c) \neq (1,0) \neq (0,1)}} \binom{p}{r} \binom{q}{c} P^{r,c}_{N}h$. Then $A_N \xrightarrow[N \rightarrow \infty]{\mathbb{P}} 0$.
    \label{lem:clt_residue}
\end{lemma}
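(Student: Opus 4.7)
The plan is to show that $A_N\to 0$ in $L^2$, which implies the claimed convergence in probability. Two structural facts made available by Section~\ref{sec:hoeffding} drive the argument: every $P^{r,c}_N h$ with $(r,c)>(0,0)$ is centred, and the $P^{r_1,c_1}_N h$, $P^{r_2,c_2}_N h$ with distinct indices are mutually orthogonal. Combined with the balanced growth assumption $\rho\in(0,1)$, this reduces the lemma to a short check of orders of magnitude in $N$.

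First, I will establish $\E[A_N]=0$. By the tower property, $\E[\psi^{r,c}h]=\E[h]$ for every $(r,c)$, and an induction on $(r,c)$ in the recursion~\eqref{eq:def_projection} yields $\E[p^{r,c}h(Y_{\mathbf{i},\mathbf{j}})]=0$ whenever $(r,c)>(0,0)$: the base case gives $p^{0,0}h=\E[h]$, and in the inductive step the $\binom{r}{r'}\binom{c}{c'}$ copies of each $\E[p^{r',c'}h]$ vanish for $(r',c')>(0,0)$ by the induction hypothesis, leaving $\E[p^{r,c}h]=\E[h]-\E[h]=0$. Hence $\E[P^{r,c}_N h]=0$ for every index appearing in $A_N$, so $\E[A_N]=0$.

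For the variance, Proposition~\ref{prop:ortho_proj}, together with exchangeability, supplies both $\V(P^{r,c}_N h)=\binom{m_N}{r}^{-1}\binom{n_N}{c}^{-1}\V(p^{r,c}h(Y_{\llbracket r\rrbracket,\llbracket c\rrbracket}))$ (only diagonal pairs survive in the double sum defining $\V(P^{r,c}_N h)$) and the mutual orthogonality of the $P^{r,c}_N h$ with distinct $(r,c)$. Therefore
\begin{equation*}
\V(A_N) \;=\; N \sum_{\substack{(0,0)<(r,c)\le(p,q)\\(r,c)\ne(1,0),\,(0,1)}} \binom{p}{r}^{2} \binom{q}{c}^{2} \binom{m_N}{r}^{-1} \binom{n_N}{c}^{-1} \V\bigl(p^{r,c}h(Y_{\llbracket r\rrbracket,\llbracket c\rrbracket})\bigr).
\end{equation*}
Each variance in this sum is finite: $p^{r,c}h$ is a fixed finite linear combination of the conditional expectations $\psi^{r',c'}h$, and each $\psi^{r',c'}h$ has finite second moment by the conditional Jensen inequality together with the hypothesis $\E[h^{2}]<\infty$.

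Finally, since $m_N\sim\rho N$ and $n_N\sim(1-\rho)N$ with $\rho\in(0,1)$, one has $\binom{m_N}{r}^{-1}\binom{n_N}{c}^{-1}=O(N^{-r-c})$, so every summand above is $O(N^{1-r-c})$. The exclusion of $(1,0)$ and $(0,1)$, combined with $(r,c)>(0,0)$, forces $r+c\ge 2$ in each term, so every summand is $O(N^{-1})$ and the finite sum gives $\V(A_N)=O(N^{-1})\to 0$. Combined with $\E[A_N]=0$, this delivers $A_N\to 0$ in $L^{2}$ and hence in probability. The lemma contains no real analytic obstacle; the work is essentially a bookkeeping of variance orders enabled by the orthogonality of Section~\ref{sec:hoeffding} together with the balanced growth $\rho\in(0,1)$.
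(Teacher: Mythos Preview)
Your proof is correct and follows essentially the same approach as the paper: establish $\E[A_N]=0$ by induction on the recursion~\eqref{eq:def_projection}, then use the orthogonality from Proposition~\ref{prop:ortho_proj} (the paper invokes its consequence, Corollary~\ref{cor:cov_ustats}) to reduce $\V(A_N)$ to a finite sum of terms each of order $O(N^{1-r-c})$ with $r+c\ge 2$. The only cosmetic difference is that you conclude via $L^2$ convergence while the paper phrases the final step as an application of Markov's inequality.
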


\begin{proof}[Proof of Theorem~\ref{th:asymptotic_normality}]
We have
\begin{equation*}
    \begin{split}
    U^h_{N} &= \sum_{(0,0) \le (r,c) \le (p,q)} \binom{p}{r} \binom{q}{c} P^{r,c}_{N}h \\
    &= P^{0,0}_{N}h + p P^{1,0}_{N}h + q P^{0,1}_{N}h + \sum_{\substack{(0,0) < (r,c) \le (p,q) \\ (r,c) \neq (1,0) \neq (0,1)}} \binom{p}{r} \binom{q}{c} P^{r,c}_{N}h.
    \end{split}
\end{equation*}
    First, we see that $P^{0,0}_{N}h = U^h_\infty$.
    Next, $A_{N}$ being defined in Lemma~\ref{lem:clt_residue}, we have
\begin{equation*}
    \sqrt{N} (U^h_{N} - U^h_\infty) = \frac{\sqrt{N} p}{m_N} \sum_{i = 1}^{m_N} \hproj{1,0}{\{i\},\emptyset}h + \frac{\sqrt{N} q }{n_N} \sum_{j = 1}^{n_N} \hproj{0,1}{\emptyset, \{j\}} h + A_N.
\end{equation*}
From Lemma~\ref{lem:clt_residue}, we have $A_N \xrightarrow[N \rightarrow \infty]{\mathbb{P}} 0$. So by Slutsky's theorem, $\sqrt{N} (U^h_{N} - U^h_\infty)$ has the same limiting distribution as the two main terms of this decomposition. From Lemma~\ref{lem:asymptotic_normality_of_main_terms}, this is the sum of two centered Gaussians of respective variance $\frac{p^2}{\rho} v^{1,0}_h$ and $\frac{q^2}{1-\rho} v^{0,1}_h$. Furthermore, $\sum_{i = 1}^{m} \hproj{1,0}{\{i\},\emptyset}h$ and $\sum_{j = 1}^{n} \hproj{0,1}{\{j\},\emptyset}h$ are independent, so the two Gaussians are independent, which concludes the proof.
\end{proof}

\begin{remark}
    The expression of $V^h$ could have been predicted with Corollary~\ref{cor:cov_ustats}. Indeed, this corollary implies that 
    \begin{equation*}
        \V[U^{h}_{N}] = \sum_{(0,0) < (r,c) \le (p,q)} \binom{p}{r}^2 \binom{q}{c}^2 \binom{m_N}{r}^{-1} \binom{n_N}{c}^{-1} \V[\hproj{r,c}{\llbracket r \rrbracket,\llbracket c \rrbracket}h],
    \end{equation*}
    so 
    \begin{equation*}
    \begin{split}
        \lim_{N\rightarrow \infty} N \V[U^{h}_{N}] &= \lim_{N\rightarrow \infty}  \left(\frac{p^2 N}{m_N} \V[\hproj{1,0}{\{1\},\emptyset}h] + \frac{q^2 N}{n_N} \V[\hproj{0,1}{\emptyset,\{1\}}h] \right) \\
        &= \frac{p^2}{c} \V[\psi^{1,0}_{(\{1\},\emptyset)}h] + \frac{q^2}{1-c} \V[\psi^{0,1}_{(\emptyset,\{1\})}h].
    \end{split}
    \end{equation*}
\end{remark}

\begin{remark}
    It is worth noting that, if $V^h = 0$, this convergence result remains valid, despite the theorem excluding this case. In such a scenario, the limit would be trivial: 
    \begin{equation*}
        \sqrt{N}(U^h_{N}-U^h_\infty) \xrightarrow[N \rightarrow \infty]{\mathbb{P}} 0.
    \end{equation*} 
    This case is referred to as \textit{degeneracy}, and we say that $U^h_N$ is a \textit{degenerate} $U$-statistic. From the orthogonal decomposition of the $U$-statistic~\eqref{eq:decomposition_ustat}, each component $P^{r,c}_N$ contributes to the limit distribution with a magnitude determined by $(r, c)$. Lemma~\ref{lem:asymptotic_normality_of_main_terms} establishes that the contributions of the two dominant terms are Gaussian. Degeneracy occurs when these components vanish, implying that the shape of the limit distribution is determined by higher-order terms, which requires a larger scaling than $\sqrt{N}$ to observe. In this paper, we only consider \textit{non-degenerate} $U$-statistics.
\end{remark}

\begin{remark}
    The asymptotic distribution of the $U$-statistic is determined by the first-order projections of the form $\hproj{1,0}{\{i\},\emptyset}h$ and $\hproj{1,0}{\emptyset, \{j\}}h$, through their separate contributions given by Lemma~\ref{lem:asymptotic_normality_of_main_terms}. These projections depend on the AHK latent variables $(\xi_i)$ and $(\eta_j)$, which correspond to the row and column effects in the RCE matrix. Interestingly, the entry-specific AHK variables $(\zeta_{ij})$ do not contribute to the asymptotic distribution. To build intuition for this observation, consider the representation of the $U$-statistic in terms of AHK variables, derived from~\eqref{eq:ahk_function}:
    \begin{equation*}
        U_N^h = \binom{m_N}{r}^{-1}\binom{n_N}{c}^{-1} \sum_{\substack{\mathbf{i} \in \mathcal{P}_p(\llbracket m_N \rrbracket)\\ \mathbf{j} \in \mathcal{P}_q(\llbracket n_N \rrbracket)}} h_\phi((\xi_{i})_{i \in \mathbf{i}}; (\eta_{j})_{j \in \mathbf{j}}; (\zeta_{ij})_{\substack{i \in \mathbf{i} \\ j \in \mathbf{j}}}).
    \end{equation*}
    In this summation over $\mathbf{i}$ and $\mathbf{j}$, the entry-specific variable $\zeta_{ij}$ appears only in terms where both $i \in \mathbf{i}$ and $j \in \mathbf{j}$. In contrast, the row-specific variable $(\xi_i)$ contributes to all terms where $i \in \mathbf{i}$, regardless of the choice of $j \in \mathbf{j}$. Since the number of such terms is $O(N)$ larger, the influence of $\xi_i$ on $U^h_N$ dominates that of $\zeta_{ij}$. An analogous reasoning applies to column-specific variables $(\eta_j)$. Thus, in the asymptotic regime, the contribution of entry-specific fluctuations captured by $(\zeta_{ij})$ is of higher order and is negligible compared to the first-order row-column variability captured by $(\xi_i)$ and $(\eta_j)$.
\end{remark}

\begin{remark}
    Theorem~\ref{th:asymptotic_normality} can be extended to cases where the numbers of rows and columns grow at different rates, meaning that $\rho = 0$ or $\rho = 1$. For instance, consider the case $\rho = 0$, which implies $m_N = o(n_N)$. In this scenario, we can rescale $U^h_N$ by $\sqrt{m_N}$ instead of $\sqrt{N}$ in the proof of Theorem~\ref{th:asymptotic_normality}, leading to 
    \begin{equation*}
        \sqrt{m_N}(U^h_N - U^h_\infty) = \frac{p}{\sqrt{m_N}} \sum_{i = 1}^{m_N} \hproj{1,0}{\{i\},\emptyset}h + B_N,
    \end{equation*}
    where Lemma~\ref{lem:asymptotic_normality_of_main_terms} remains applicable and, analogous to Lemma~\ref{lem:clt_residue}, it can be shown that the sequence $B_N \xrightarrow[N \rightarrow \infty]{\mathbb{P}} 0$. Therefore, we obtain
    \begin{equation*}
        \sqrt{m_N}(U^h_N - U^h_\infty) \xrightarrow[N \rightarrow \infty]{\mathcal{D}} \mathcal{N}(0, V'),
    \end{equation*}
    where $V' = p^2 v_h^{1,0}$.
\end{remark}

In Appendix~\ref{app:asymptotic_normality}, we also demonstrate the asymptotic normality of vectors and functions of $U$-statistics on RCE matrices, as stated by Corollaries~\ref{cor:joint_asymptotic_normality} and~\ref{cor:delta_method}. Specifically, the asymptotic normality of functions of $U$-statistics is very useful to perform statistical inference on bipartite network data. We give more details in Section~\ref{sec:examples}.

\section{Estimation of the asymptotic variance}
\label{sec:variance_estimator}
\label{SEC:VARIANCE_ESTIMATOR}

Theorem~\ref{th:asymptotic_normality} shows the asymptotic normality of RCE submatrix $U$-statistics. To perform statistical inference using these $U$-statistics, one needs to estimate their variances. We see that 
\begin{equation*}
    \V[U^h_N] = \left[\binom{m}{p} \binom{n}{q}\right]^{-2} \sum_{\substack{\mathbf{i} \in \mathcal{P}_p(\llbracket m \rrbracket)\\\mathbf{j} \in \mathcal{P}_q(\llbracket n \rrbracket)}} \sum_{\substack{\mathbf{i}' \in \mathcal{P}_p(\llbracket m \rrbracket)\\\mathbf{j}' \in \mathcal{P}_q(\llbracket n \rrbracket)}} \Cov\left(h(Y_{\mathbf{i},\mathbf{j}}), h(Y_{\mathbf{i}',\mathbf{j}'})\right).
\end{equation*}
By exchangeability, the covariance between the kernels $h(Y_{\mathbf{i},\mathbf{j}})$ and $h(Y_{\mathbf{i}',\mathbf{j}'})$ only depends on the number of row and column indices they share. Denote $\gamma^{r,c}_h := \Cov\left(h(Y_{\mathbf{i},\mathbf{j}}), h(Y_{\mathbf{i}',\mathbf{j}'})\right)$ where $Y_{\mathbf{i},\mathbf{j}}$ and $Y_{\mathbf{i}',\mathbf{j}'}$ share $r$ row indices and $c$ column indices: $\Card(\mathbf{i} \cap \mathbf{i}') = r$ and $\Card(\mathbf{j} \cap \mathbf{j}') = c$. We get
\begin{equation*}
    \V[U^h_N] = \left[\binom{m}{p} \binom{n}{q}\right]^{-1} \sum_{(0,0) \le (r,c) \le (p,q)} \binom{p}{r} \binom{q}{c} \binom{m-p}{p-r} \binom{n-q}{q-c} \gamma^{r,c}_h.
\end{equation*}
Each $\gamma^{r,c}_h$, $1\le r\le p, 1 \le c \le q$, can be estimated using empirical covariance estimators, between kernel terms that share $r$ rows and $c$ columns and in particular. This leads to an unbiased estimator of $\V[U^h_N]$ similar to the one discussed by \cite{schucany1989small} for $U$-statistics of one-dimensional i.i.d. arrays. However, the estimation of these covariances is computationally intensive and the estimators can take negative values, which can lead to a negative variance estimation.

One approach is to estimate the asymptotic variance $V^h$. The asymptotic variance formula given by Theorem \ref{th:asymptotic_normality} is $V^h = \frac{p^2}{\rho} v^{1,0}_h + \frac{q^2}{1-\rho} v^{0,1}_h$. Remark that 
$v^{1,0}_h = \gamma^{1,0}_h$ and $v^{0,1}_h = \gamma^{0,1}_h$. It is often tedious to analytically calculate $V^h$, especially as it depends on $h$ and the distribution of $Y$, see for example Section 3 of~\cite{leminh2023ustatistics}. 

We present here a kernel-free and model-free estimator of $V^h$, taking advantage of the Hoeffding decomposition. Indeed, we will first define estimators for the conditional expectations $(\psi^{1,0}_{(\{i\}, \emptyset)} h)_{1 \le i \le m}$ and $(\psi^{0,1}_{(\emptyset, \{j\})} h)_{1 \le j \le n}$. Then, using the fact that $v^{1,0}_h = \V[\psi^{1,0}_{(\{1\}, \emptyset)} h]$ and $v^{0,1}_h = \V[\psi^{0,1}_{(\emptyset, \{1\})} h]$, we can derive a positive estimator for $V^h$. Afterward, we explain how to generalize it to estimate the variance of functions of $U$-statistics.

\subsection{Notations}
\label{subsec:notations}

First, we introduce further notations and a helpful lemma for this section. For some $N > 0$, the size of the overall RCE matrix is $m_N \times n_N$. $i$ being a row index means that $1 \le i \le m_N$ and $j$ being a column index means that $1 \le j \le n_N$. We denote
\begin{equation*}
    X_{\mathbf{i}, \mathbf{j}} := h(Y_{\mathbf{i},\mathbf{j}}).
\end{equation*} 
For $N$ such that $m_N \ge p$ and $n_N \ge q$, we further denote 
\begin{equation*}
    \mathcal{S}^{p,q}_{N} := \left\{ (\mathbf{i}, \mathbf{j}): \mathbf{i} \in \mathcal{P}_p(\llbracket m_N \rrbracket), \mathbf{j} \in \mathcal{P}_q(\llbracket n_N \rrbracket) \right\},
\end{equation*}
so that the set of the kernels taken on all the $p \times q$ submatrices can be written as $(X_{\mathbf{i}, \mathbf{j}})_{(\mathbf{i}, \mathbf{j}) \in \mathcal{S}^{p,q}_{N}}$.

Let $\mathbf{\underline{i}}$ be a set of row indices of size $\underline{p}$ such that $0 \le \underline{p} \le p$ and $\mathbf{\underline{j}}$ a set of column indices of size $\underline{q}$ such that $0 \le \underline{q} \le q$. The subset of $\mathcal{S}^{p,q}_{N}$ where $\mathbf{\underline{i}}$ is included in the row indices and $\mathbf{\underline{j}}$ is included in the column indices is denoted
\begin{equation*}
    \mathcal{S}^{p,q}_{N , (\mathbf{\underline{i}}, \mathbf{\underline{j}}) } := \left\{ (\mathbf{i}, \mathbf{j}) \in \mathcal{S}^{p,q}_{N} : \mathbf{\underline{i}} \subset \mathbf{i}, \mathbf{\underline{j}} \subset \mathbf{j} \right\}.
\end{equation*}
For example, the set of the kernels taken on all the $p \times q$ submatrices containing the columns $1$ and $2$ can be written $(X_{\mathbf{i}, \mathbf{j}})_{(\mathbf{i}, \mathbf{j}) \in \mathcal{S}^{p,q}_{N, (\emptyset, \{1,2\})}}$.

\subsection{Estimation of the conditional expectations}

In this section, for all $i \in \llbracket m_N \rrbracket$ and $j \in \llbracket n_N \rrbracket$, we define estimators for $\psi^{1,0}_{(\{i\}, \emptyset)} h = \E[X_{\llbracket p \rrbracket, \llbracket q \rrbracket} \mid \xi_i]$ and $\psi^{0,1}_{(\emptyset, \{j\})} h = \E[X_{\llbracket p \rrbracket, \llbracket q \rrbracket} \mid \eta_j]$, where $\xi_i$ and $\eta_j$ have been defined in Section~\ref{sec:ahk}.

Let $\widehat{\mu}^{h,(i)}_N$ be the average of the kernel function applied on the $p \times q$ submatrices containing the row $i$. Symmetrically, let $\widehat{\nu}^{h,(j)}_N$ be the average of the kernel function applied on the $p \times q$ submatrices containing the column $j$. This means
\begin{equation}
    \widehat{\mu}^{h,(i)}_N := \binom{m_N-1}{p-1}^{-1} \binom{n_N}{q}^{-1} \sum_{(\mathbf{i}, \mathbf{j}) \in \mathcal{S}^{p,q}_{N , (\{i\}, \emptyset) }} X_{\mathbf{i}, \mathbf{j}},
\label{eq:mu}
\end{equation}
and
\begin{equation}
    \widehat{\nu}^{h,(j)}_N := \binom{m_N}{p}^{-1} \binom{n_N-1}{q-1}^{-1} \sum_{(\mathbf{i}, \mathbf{j}) \in \mathcal{S}^{p,q}_{N , (\emptyset, \{j\}) }} X_{\mathbf{i}, \mathbf{j}}.
\label{eq:nu}
\end{equation}
The following propositions, proven in Appendix~\ref{app:variance_estimator}, establish guarantees for these estimators.

\begin{proposition}
    If $Y$ is an RCE matrix, then $\widehat{\mu}^{h,(i)}_N$ and $\widehat{\nu}^{h,(i)}_N$  are  both conditionally unbiased $\psi^{1,0}_{(\{i\}, \emptyset)} h$ and $\psi^{0,1}_{(\emptyset, \{j\})} h$, i.e. we have for all $N \in \mathbb{N}$:
    \begin{itemize}
        \item $\E[\widehat{\mu}^{h,(i)}_N \mid \xi_i] = \psi^{1,0}_{(\{i\}, \emptyset)} h$,
        \item $\E[\widehat{\nu}^{h,(j)}_N \mid \eta_j] = \psi^{0,1}_{(\emptyset, \{j\})} h$.
    \end{itemize}
    \label{prop:cond_exp_unbiased}
\end{proposition}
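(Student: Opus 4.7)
The plan is to proceed by direct computation, using the linearity of conditional expectation together with the invariance built into the definition of $\psi^{r,c}h$ from Section~\ref{sec:ahk}. Since the two statements are fully symmetric under swapping the roles of $(m_N, p, \xi)$ and $(n_N, q, \eta)$, I would argue the row version for $\widehat{\mu}^{h,(i)}_N$ in detail and obtain the column version for $\widehat{\nu}^{h,(j)}_N$ by the obvious symmetry.

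First, by linearity of conditional expectation, I would push the expectation inside the finite sum defining $\widehat{\mu}^{h,(i)}_N$:
\begin{equation*}
\E[\widehat{\mu}^{h,(i)}_N \mid \xi_i] = \binom{m_N-1}{p-1}^{-1} \binom{n_N}{q}^{-1} \sum_{(\mathbf{i}, \mathbf{j}) \in \mathcal{S}^{p,q}_{N, (\{i\}, \emptyset)}} \E[h(Y_{\mathbf{i}, \mathbf{j}}) \mid \xi_i].
\end{equation*}
Second, every pair in the index set satisfies $\{i\} \subset \mathbf{i}$, so by the definition of $\psi^{1,0}_{(\{i\}, \emptyset)} h$ from Section~\ref{sec:ahk}---the key point being that $\E[h(Y_{\mathbf{i}, \mathbf{j}}) \mid \mathcal{A}_{\{i\}, \emptyset}]$ does not depend on the particular choice of $(\mathbf{i}, \mathbf{j})$ as long as $\{i\} \subset \mathbf{i}$---and the identity $\mathcal{A}_{\{i\}, \emptyset} = \sigma(\xi_i)$, every summand equals $\psi^{1,0}_{(\{i\}, \emptyset)} h$. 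Third, counting gives $|\mathcal{S}^{p,q}_{N, (\{i\}, \emptyset)}| = \binom{m_N - 1}{p-1} \binom{n_N}{q}$, which cancels the normalizing prefactor exactly and yields $\E[\widehat{\mu}^{h,(i)}_N \mid \xi_i] = \psi^{1,0}_{(\{i\}, \emptyset)} h$.

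There is essentially no real obstacle; the only point worth flagging is the invariance asserted in the second step, namely that $\E[h(Y_{\mathbf{i}, \mathbf{j}}) \mid \xi_i]$ is the same function of $\xi_i$ no matter which $\mathbf{i} \ni i$ and which $\mathbf{j}$ appear. This invariance is what makes the notation $\psi^{1,0}_{(\{i\}, \emptyset)} h$ well-defined and follows from the AHK representation: writing $h(Y_{\mathbf{i}, \mathbf{j}}) = h_\phi((\xi_k)_{k \in \mathbf{i}}, (\eta_l)_{l \in \mathbf{j}}, (\zeta_{kl})_{k \in \mathbf{i}, j \in \mathbf{j}})$ and integrating out all AHK variables other than $\xi_i$ under their i.i.d.\ uniform law yields the same function of $\xi_i$ for every admissible $(\mathbf{i}, \mathbf{j})$. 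Everything else is bookkeeping.
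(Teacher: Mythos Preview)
Your proof is correct and follows essentially the same approach as the paper: linearity of conditional expectation, identification of each summand as $\psi^{1,0}_{(\{i\},\emptyset)}h$ via the definition from Section~\ref{sec:ahk}, and cancellation of the normalizing constant by counting. If anything, you are more explicit than the paper in justifying the invariance step through the AHK representation.
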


\begin{proposition}
    If $Y$ is an RCE matrix, then
    \begin{itemize}
        \item $\widehat{\mu}^{h,(i)}_N \xrightarrow[N \rightarrow \infty]{a.s., L_1} \psi^{1,0}_{(\{i\}, \emptyset)} h$,
        \item $\widehat{\nu}^{h,(j)}_N \xrightarrow[N \rightarrow \infty]{a.s., L_1} \psi^{0,1}_{(\emptyset, \{j\})} h$.
    \end{itemize}
    As a consequence, $\widehat{\mu}^{h,(i)}_N$ and $\widehat{\nu}^{h,(j)}_N$ are consistent estimators for $\psi^{1,0}_{(\{i\}, \emptyset)} h$ and $\psi^{0,1}_{(\emptyset, \{j\})} h$.
    \label{prop:cond_exp_consistent}
\end{proposition}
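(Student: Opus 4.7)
The argument for $\widehat{\mu}^{h,(i)}_N$ will be given in detail; that for $\widehat{\nu}^{h,(j)}_N$ is symmetric by exchanging the roles of rows and columns. The key idea is to recognize $\widehat{\mu}^{h,(i)}_N$ as itself a $U$-statistic on an augmented dissociated RCE matrix, to which Corollary~\ref{cor:cov_ustats} then applies. Using the AHK representation, fix $i$ and define, for each $l \geq 1$, the augmented column latent $\tilde\eta_l := (\eta_l, \zeta_{il})$. Since $(\eta_l)_l$ and $(\zeta_{il})_l$ are independent i.i.d. sequences of uniform variables, the $(\tilde\eta_l)_l$ are i.i.d. Conditional on $\xi_i$, the rows indexed by $\llbracket m_N \rrbracket \setminus \{i\}$, columns indexed by $\llbracket n_N \rrbracket$, with row latents $(\xi_k)_{k \neq i}$, column latents $(\tilde\eta_l)_l$ and cell variables $(\zeta_{kl})_{k \neq i}$, form a dissociated RCE matrix of size $(m_N-1) \times n_N$. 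The kernel $h(Y_{\mathbf{i}' \cup \{i\}, \mathbf{j}})$ rewrites as $\tilde h_{\xi_i}(Y_{\mathbf{i}', \mathbf{j}}^{\mathrm{aug}})$ for a suitable symmetric kernel of order $(p-1,q)$, so that $\widehat{\mu}^{h,(i)}_N$ is precisely the associated $U$-statistic on the augmented matrix.

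\textbf{$L^1$ convergence.} Applying Corollary~\ref{cor:cov_ustats} to this augmented $U$-statistic (conditional on $\xi_i$) yields $\V[\widehat{\mu}^{h,(i)}_N \mid \xi_i] = O(1/N)$ under the assumption $\E[h(Y_{(1,\ldots,p;1,\ldots,q)})^2] < \infty$. Combined with Proposition~\ref{prop:cond_exp_unbiased} (conditional unbiasedness $\E[\widehat{\mu}^{h,(i)}_N \mid \xi_i] = \psi^{1,0}_{(\{i\},\emptyset)} h$), this gives
\begin{equation*}
\E\bigl[(\widehat{\mu}^{h,(i)}_N - \psi^{1,0}_{(\{i\},\emptyset)} h)^2 \mid \xi_i\bigr] = \V[\widehat{\mu}^{h,(i)}_N \mid \xi_i] = O(1/N).
\end{equation*}
Taking expectations yields $L^2$ convergence, hence $L^1$ convergence by Cauchy--Schwarz.

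\textbf{Almost sure convergence.} I would adapt the classical reverse-martingale proof of the strong law for $U$-statistics. Viewing $\widehat{\mu}^{h,(i)}_N$ as the average, over all submatrices of size $p \times q$ containing row $i$, of a symmetric kernel, define the decreasing filtration $\mathcal{G}_N := \sigma(\xi_i) \vee \sigma(\widehat{\mu}^{h,(i)}_{N'} : N' \geq N)$. Exploiting the row/column symmetry of $(\xi_k, \zeta_{kl})_{k \neq i}$ and $(\tilde\eta_l)_l$, the sequence $(\widehat{\mu}^{h,(i)}_N)$ forms a reverse martingale adapted to $(\mathcal{G}_N)$ (as in Hoeffding's original argument, now in the bipartite RCE setting with the augmented column latents). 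The reverse martingale convergence theorem provides almost sure convergence to some $\mathcal{G}_\infty$-measurable limit, which must coincide with the $L^1$ limit $\psi^{1,0}_{(\{i\},\emptyset)} h$.

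\textbf{Main obstacle.} The delicate point is verifying the reverse-martingale structure in the bipartite RCE setting: one must check that conditioning $\widehat{\mu}^{h,(i)}_{N}$ on the $\sigma$-algebra generated by later statistics $\widehat{\mu}^{h,(i)}_{N'}$, $N' \geq N$, produces $\widehat{\mu}^{h,(i)}_N$ itself. This uses the fact that after fixing row $i$, the remaining rows and the augmented columns are exchangeable, so averaging the later-stage symmetric statistic over all reorderings of the first $m_N - 1$ rows and first $n_N$ columns recovers $\widehat{\mu}^{h,(i)}_N$. Should this prove too delicate, an alternative is a Borel--Cantelli argument along the subsequence $N_k = k^2$, combined with a monotonicity/regularity bound controlling $|\widehat{\mu}^{h,(i)}_N - \widehat{\mu}^{h,(i)}_{N_k}|$ for $N \in [N_k, N_{k+1}]$, to obtain a.s. convergence from the $O(1/N)$ variance.
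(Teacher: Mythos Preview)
Your reverse-martingale approach for almost sure convergence is exactly what the paper does, with essentially the same filtration (the paper takes $\mathcal{F}_N = \sigma(\widehat{\mu}^{h,(i)}_K : K \geq N)$ without explicitly adjoining $\sigma(\xi_i)$). The martingale property is verified by the exchangeability argument you sketch: for any permutation $\Phi_N \in \mathbb{S}^{(i)}_{m_N} \times \mathbb{S}_{n_N}$ (fixing row $i$) one has $\widehat{\mu}^{h,(i)}_N(\Phi_N Y) = \widehat{\mu}^{h,(i)}_N(Y)$, hence $Y \mid \mathcal{F}_N \overset{\mathcal{D}}{=} \Phi_N Y \mid \mathcal{F}_N$, so all terms $\E[h(Y_{\mathbf{i},\mathbf{j}}) \mid \mathcal{F}_{N+1}]$ with $i \in \mathbf{i}$ coincide, giving $\E[\widehat{\mu}^{h,(i)}_N \mid \mathcal{F}_{N+1}] = \widehat{\mu}^{h,(i)}_{N+1}$. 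So the ``main obstacle'' you flag is handled precisely as you describe, and neither the augmented-matrix reformulation nor the Borel--Cantelli fallback is needed.

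Two places where the paper is more economical. First, your separate $L^2 \Rightarrow L^1$ argument via the augmented RCE matrix and Corollary~\ref{cor:cov_ustats} is correct but superfluous: the backward martingale convergence theorem (Theorem~\ref{th:martingale_convergence}) already delivers $L^1$ convergence alongside almost sure convergence, under only the integrability of $h$. Your route imports the extra hypothesis $\E[h^2] < \infty$, which the proposition does not assume. Second, the paper identifies the limit directly by asserting $\mathcal{F}_\infty = \sigma(\xi_i)$ and invoking Proposition~\ref{prop:cond_exp_unbiased}, rather than matching against a separately-established $L^1$ limit. Your augmented-column-latent construction $\tilde\eta_l = (\eta_l, \zeta_{il})$ is a nice way to see why, conditional on $\xi_i$, the remaining structure is still dissociated RCE---but the paper gets by without making this explicit.
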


\begin{remark}
    Despite the AHK variables $(\xi_i)_{i \ge 1}$ and $(\eta_j)_{j \ge 1}$ being unobserved, Propositions~\ref{prop:cond_exp_unbiased} and~\ref{prop:cond_exp_consistent} ensure that the conditional expectations $\psi^{1,0}_{(\{i\}, \emptyset)} h$ and $\psi^{0,1}_{(\emptyset, \{ j \})} h$ can be estimated using the estimators $\widehat{\mu}^{h,(i)}_N$ and $\widehat{\nu}^{h,(j)}_N$.
\end{remark}

\subsection{Estimation of the asymptotic variance}
\label{sub:variance_estimator}

Using the estimators for the conditional expectations $\psi^{1,0}_{(\{i\}, \emptyset)} h$ and $\psi^{0,1}_{(\emptyset, \{j\})} h$ that we have defined in the previous section, we suggest the following estimators for $v^{1,0}_h = \V[\psi^{1,0}_{(\{1\}, \emptyset)} h]$ and $v^{0,1}_h = \V[\psi^{0,1}_{(\emptyset, \{j\})} h]$:

\begin{equation}
    \widehat{v}^{h;1,0}_N = \binom{m_N}{2}^{-1} \sum_{1 \le i_1 < i_2 \le m_N} \frac{(\widehat{\mu}^{h,(i_1)}_N - \widehat{\mu}^{h,(i_2)}_N)^2}{2},
    \label{eq:est_v10}
\end{equation}

\begin{equation}
    \widehat{v}^{h;0,1}_N = \binom{n_N}{2}^{-1} \sum_{1 \le j_1 < j_2 \le n_N} \frac{(\widehat{\nu}^{h,(j_1)}_N - \widehat{\nu}^{h,(j_2)}_N)^2}{2}.
    \label{eq:est_v01}
\end{equation}
Then, an estimator for $V^h$ is
\begin{equation*}
    \widehat{V}^h_N := \frac{p^2}{\rho}\widehat{v}^{h;1,0}_N + \frac{q^2}{1-\rho} \widehat{v}^{h;0,1}_N.
\end{equation*}

The following theorem shows that $\widehat{V}^h_N$ is a consistent estimator for $V^h$. The proof relies on the fact that $\widehat{v}^{h;1,0}_N$ and $\widehat{v}^{h;0,1}_N$ are both asymptotically unbiased and have vanishing variances. These properties are established by Propositions~\ref{prop:est_var_unbias} and~\ref{prop:est_var_variance} in Appendix~\ref{app:variance_estimator}.

\begin{theorem}
We have $\widehat{v}^{h;1,0}_N \xrightarrow[N \rightarrow \infty]{\mathbb{P}} v^{1,0}_h$ and $\widehat{v}^{h;0,1}_N \xrightarrow[N \rightarrow \infty]{\mathbb{P}} v^{0,1}_h$. As a consequence, $\widehat{V}^h_N  \xrightarrow[N \rightarrow \infty]{\mathbb{P}} V^h$.
\label{th:consistency_est_var}
\end{theorem}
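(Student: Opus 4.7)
The plan is to reduce the theorem to two simpler claims about $\widehat{v}^{h;1,0}_N$ and $\widehat{v}^{h;0,1}_N$ individually, since $\widehat{V}^h_N$ is, by definition, a fixed linear combination of them. Once we show $\widehat{v}^{h;1,0}_N \xrightarrow{\mathbb{P}} v^{1,0}_h$ and $\widehat{v}^{h;0,1}_N \xrightarrow{\mathbb{P}} v^{0,1}_h$, Slutsky's theorem (or equivalently, the continuous mapping theorem applied to $(x,y) \mapsto \rho^{-1} p^2 x + (1-\rho)^{-1} q^2 y$) immediately delivers $\widehat{V}^h_N \xrightarrow{\mathbb{P}} V^h$. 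The two convergences are symmetric under the exchange of rows and columns, so it suffices to focus on $\widehat{v}^{h;1,0}_N$.

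For $\widehat{v}^{h;1,0}_N \xrightarrow{\mathbb{P}} v^{1,0}_h$, I will proceed by a standard second-moment argument: establish (i) asymptotic unbiasedness, $\E[\widehat{v}^{h;1,0}_N] \to v^{1,0}_h$, and (ii) vanishing variance, $\V[\widehat{v}^{h;1,0}_N] \to 0$. These are precisely the two statements isolated as Propositions~\ref{prop:est_var_unbias} and~\ref{prop:est_var_variance} in Appendix~\ref{app:variance_estimator}. Combining (i) and (ii) gives $L^2$ convergence of $\widehat{v}^{h;1,0}_N$ to $v^{1,0}_h$, from which convergence in probability follows via Markov's (Chebyshev's) inequality. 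The companion statement for $\widehat{v}^{h;0,1}_N$ is proved by the same argument with the roles of rows and columns interchanged.

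The asymptotic unbiasedness step (i) is reasonably transparent: rewriting $\widehat{v}^{h;1,0}_N$ via the identity $\binom{m_N}{2}^{-1}\sum_{i_1<i_2}(a_{i_1}-a_{i_2})^2/2 = (m_N-1)^{-1}\sum_i (a_i-\bar a)^2$, then using the conditional unbiasedness $\E[\widehat{\mu}^{h,(i)}_N \mid \xi_i] = \psi^{1,0}_{(\{i\},\emptyset)}h$ from Proposition~\ref{prop:cond_exp_unbiased} together with the $L_1$ consistency from Proposition~\ref{prop:cond_exp_consistent}, produces in the limit the variance of $\psi^{1,0}_{(\{1\},\emptyset)}h$, which is $v^{1,0}_h$ by definition. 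The i.i.d.\ structure across $i$ of the $\psi^{1,0}_{(\{i\},\emptyset)}h$ (they depend only on the independent variables $\xi_i$) guarantees that the off-diagonal cross terms vanish cleanly.

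The main obstacle will be step (ii), the vanishing-variance bound. The difficulty is that the different summands $\widehat{\mu}^{h,(i)}_N$ entering $\widehat{v}^{h;1,0}_N$ are highly dependent: they share all column latent variables $(\eta_j)$ and many of the entry-specific variables $(\zeta_{ij})$. My strategy in the appendix will be to expand $\V[\widehat{v}^{h;1,0}_N]$ as a sum of covariances indexed by quadruples of rows, then group these terms by the number of shared row indices (using RCE exchangeability so the covariance depends only on overlap pattern). A combinatorial count shows that the number of quadruples with a given overlap pattern grows like $m_N^k$ for some $k \in \{1,2,3,4\}$, while the normalizing factor $\binom{m_N}{2}^{-2}$ scales like $m_N^{-4}$; moreover, when all four rows are distinct, the covariance itself vanishes at the right rate because of the $L_1$ consistency of $\widehat\mu$ to i.i.d.\ limits. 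The bookkeeping required to show that each pattern contributes $o(1)$ is the genuine technical content and is deferred to Proposition~\ref{prop:est_var_variance}.
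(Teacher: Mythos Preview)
Your proposal is correct and matches the paper's approach exactly: reduce to Propositions~\ref{prop:est_var_unbias} (asymptotic unbiasedness) and~\ref{prop:est_var_variance} (vanishing variance), then combine them via Chebyshev's inequality to obtain convergence in probability, with the conclusion for $\widehat{V}^h_N$ following by linearity. The only minor divergence is in your sketches of the two supporting propositions: the paper establishes both by direct moment computations through a combinatorial lemma (Lemma~\ref{lem:expectation_combi}) that identifies the dominant term in averages of the form $\E[\prod_k X_{\mathbf{i}_k,\mathbf{j}_k}]$, rather than by invoking the $L_1$ consistency of $\widehat{\mu}^{h,(i)}_N$ or by grouping covariance terms by row-overlap pattern as you outline; your route would also work, though the $L_1$ consistency alone does not control second moments, so you would still need an $L_2$-type bound at that step.
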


\begin{proof}[Proof of Theorem~\ref{th:consistency_est_var}]
    For some $\epsilon > 0$, it follows from Proposition~\ref{prop:est_var_unbias} that for large enough values of $N$, $\left\lvert\E[\widehat{v}^{h;1,0}_N] - v^{1,0}_h\right\rvert < \epsilon$. The triangular inequality and Chebyshev's inequality state that 
    \begin{equation*}
    \begin{split}
         \mathbb{P}\left(\left\lvert\widehat{v}^{h;1,0}_N - v^{1,0}_h\right\rvert > \epsilon\right) &\le  \mathbb{P}\left(\left\lvert\widehat{v}^{h;1,0}_N - \E[\widehat{v}^{h;1,0}_N]\right\rvert \ge \epsilon - \left\lvert\E[\widehat{v}^{h;1,0}_N] - v^{1,0}_h\right\rvert\right) \\
         &\le \frac{\V[\widehat{v}^{h;1,0}_N]}{\left(\epsilon -\left\lvert\E[\widehat{v}^{h;1,0}_N] - v^{1,0}_h\right\rvert\right)^2}.
    \end{split}
    \end{equation*}
    Applying Propositions~\ref{prop:est_var_unbias} and~\ref{prop:est_var_variance} to the right-hand side of the inequality ensures that $\mathbb{P}\left(\left\lvert\widehat{v}^{h;1,0}_N - v^{1,0}_h\right\rvert > \epsilon\right) \xrightarrow[N \rightarrow \infty]{} 0$ which concludes the proof.
\end{proof}

With Theorem~\ref{th:consistency_est_var}, it is possible to use $\widehat{V}^h_N$ for statistical inference tasks when plugged-in in place of $V^h$, an asymptotic normality result similar to Theorem~\ref{th:asymptotic_normality} holds, by Slutsky's theorem.

\begin{proposition}
    If $V^h > 0$, then
    \begin{equation*}
        \sqrt{\frac{N}{\widehat{V}^h_N}}\left(U^h_N - U^h_\infty\right) \xrightarrow[N \rightarrow \infty]{\mathcal{D}} \mathcal{N}(0, 1).
    \end{equation*}
    \label{prop:asymptotic_normality_plugin}
\end{proposition}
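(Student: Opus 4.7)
The plan is to derive this statement as a direct consequence of the two main results already established, namely Theorem~\ref{th:asymptotic_normality} (asymptotic normality of $U^h_N$) and Theorem~\ref{th:consistency_est_var} (consistency of $\widehat{V}^h_N$), via Slutsky's theorem. The argument is essentially an application of the classical studentization trick, so I do not expect genuine technical obstacles.

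First I would start from Theorem~\ref{th:asymptotic_normality}, which, under the hypothesis $V^h > 0$, gives
\begin{equation*}
    \sqrt{N}(U^h_N - U^h_\infty) \xrightarrow[N \rightarrow \infty]{\mathcal{D}} \mathcal{N}(0, V^h).
\end{equation*}
Next, I would invoke Theorem~\ref{th:consistency_est_var}, which yields $\widehat{V}^h_N \xrightarrow[N \rightarrow \infty]{\mathbb{P}} V^h$. Since $V^h > 0$ and the map $x \mapsto 1/\sqrt{x}$ is continuous on the open half-line $(0, \infty)$, the continuous mapping theorem applied to convergence in probability gives $1/\sqrt{\widehat{V}^h_N} \xrightarrow[N \rightarrow \infty]{\mathbb{P}} 1/\sqrt{V^h}$.

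Then I would write
\begin{equation*}
    \sqrt{\frac{N}{\widehat{V}^h_N}}\left(U^h_N - U^h_\infty\right) = \frac{1}{\sqrt{\widehat{V}^h_N}} \cdot \sqrt{N}\left(U^h_N - U^h_\infty\right),
\end{equation*}
so that by Slutsky's theorem applied to the product of a sequence converging in probability to the constant $1/\sqrt{V^h}$ and a sequence converging in distribution to $\mathcal{N}(0, V^h)$, the product converges in distribution to $(1/\sqrt{V^h})\cdot \mathcal{N}(0, V^h) = \mathcal{N}(0, 1)$, which is the desired conclusion.

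The only subtlety worth addressing explicitly is well-definedness of the statistic $\sqrt{N/\widehat{V}^h_N}$. From equations \eqref{eq:est_v10} and \eqref{eq:est_v01}, $\widehat{v}^{h;1,0}_N$ and $\widehat{v}^{h;0,1}_N$ are averages of squared differences and are therefore non-negative, hence $\widehat{V}^h_N \ge 0$ by construction. The event $\{\widehat{V}^h_N = 0\}$ has probability tending to zero because $\widehat{V}^h_N \xrightarrow{\mathbb{P}} V^h > 0$, so the studentized statistic is defined with probability tending to one, which is sufficient for the convergence in distribution to hold; this is the standard convention when applying Slutsky's theorem in the studentized setting.
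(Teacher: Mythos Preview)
Your proposal is correct and follows exactly the approach indicated in the paper, which simply states that the result holds ``by Slutsky's theorem'' after combining Theorem~\ref{th:asymptotic_normality} with Theorem~\ref{th:consistency_est_var}. You have supplied the standard details (continuous mapping for $1/\sqrt{\widehat{V}^h_N}$, then Slutsky for the product), and your remark on the well-definedness of the studentized statistic is a welcome addition that the paper does not make explicit.
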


\subsection{Computation of the estimator}
\label{sub:variance_estimator_alternate}

In practice, the computation of the estimators $(\widehat{\mu}^{h,(i)}_N)_{1 \le i \le m_N}$ and $(\widehat{\nu}^{h,(j)}_N)_{1 \le j \le n_N}$ using their definition is pretty tedious, as they are sums of $O(N^{p+q-1})$ terms. The computation cost of $\widehat{V}^h_N$ is then $O(N^{p+q})$ which is of the same order as the computation of $U^h_N$ when naively applying the kernel function to all the $p \times q$ submatrices and averaging. However, computing $U^h_N$ in this manner is generally undesirable. In many problem-specific settings, more efficient methods exist for computing $U$-statistics.  Typically, for simple kernels, $U^h_N$ can be expressed in terms of matrix operations, which are optimized in most computing software (see Section~\ref{sub:simu_variance} for an example). In such cases, it is beneficial to rewrite the proposed variance estimators in the form of those $U$-statistics, to leverage these advantages.

\begin{proposition}
    An alternative form for $\widehat{v}^{h;1,0}_N$ and $\widehat{v}^{h;0,1}_N$ is given by
    \begin{equation}
        \widehat{v}^{h;1,0}_N = \frac{(m_N - p)^2}{p^2(m_N-1)} \sum_{i=1}^{m_N} \left(U^h_N - U_N^{h,(-i,\emptyset)}\right)^2
    \label{eq:est_v10_alternate}
    \end{equation}
    and
    \begin{equation}
        \widehat{v}^{h;0,1}_N = \frac{(n_N - q)^2}{q^2(n_N-1)} \sum_{j=1}^{n_N} \left(U^h_N - U_N^{h,(\emptyset,-j)}\right)^2,
    \label{eq:est_v01_alternate}
    \end{equation}
    where
    \begin{equation*}
        U_N^{h,(-i,\emptyset)} := \left[\binom{m_N-1}{p} \binom{n_N}{q}\right]^{-1} \sum_{\substack{\mathbf{i} \in \mathcal{P}_p(\llbracket m_N \rrbracket \backslash \{i\})\\ \mathbf{j} \in \mathcal{P}_q(\llbracket n_N \rrbracket)}} h(Y_{\mathbf{i},\mathbf{j}})
    \end{equation*}
    and 
    \begin{equation*}
        U_N^{h,(\emptyset,-j)} := \left[\binom{m_N}{p} \binom{n_N-1}{q}\right]^{-1} \sum_{\substack{\mathbf{i} \in \mathcal{P}_p(\llbracket m_N \rrbracket)\\ \mathbf{j} \in \mathcal{P}_q(\llbracket n_N \rrbracket \backslash \{j\})}} h(Y_{\mathbf{i},\mathbf{j}}).
    \end{equation*}
\end{proposition}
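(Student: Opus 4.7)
The plan is to reduce the pairwise-difference form of $\widehat{v}^{h;1,0}_N$ to a centered sum of squares, identify the center with $U^h_N$, and then re-express each individual deviation $\widehat{\mu}^{h,(i)}_N - U^h_N$ in terms of the leave-one-out $U$-statistic $U_N^{h,(-i,\emptyset)}$.

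First, I would apply the classical algebraic identity
\begin{equation*}
    \binom{m}{2}^{-1} \sum_{1 \le i_1 < i_2 \le m} \frac{(x_{i_1} - x_{i_2})^2}{2} = \frac{1}{m-1} \sum_{i=1}^{m} (x_i - \bar{x})^2, \qquad \bar{x} := \frac{1}{m} \sum_{i=1}^{m} x_i,
\end{equation*}
with $x_i = \widehat{\mu}^{h,(i)}_N$ and $m = m_N$, to recast $\widehat{v}^{h;1,0}_N$ as $\tfrac{1}{m_N-1} \sum_{i=1}^{m_N} (\widehat{\mu}^{h,(i)}_N - \bar{\mu})^2$. The next step is to identify $\bar{\mu}$: in the definition \eqref{eq:mu} of $\widehat{\mu}^{h,(i)}_N$, each pair $(\mathbf{i}, \mathbf{j}) \in \mathcal{S}^{p,q}_{N}$ is counted once for every row index $i \in \mathbf{i}$, hence exactly $p$ times when summed over $i$. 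Together with the elementary identity $\tfrac{p}{m_N \binom{m_N-1}{p-1}} = \binom{m_N}{p}^{-1}$, this gives $\bar{\mu} = U^h_N$.

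The key technical step is the relation between $\widehat{\mu}^{h,(i)}_N$, $U^h_N$, and $U_N^{h,(-i,\emptyset)}$. I would split the outer sum in $U^h_N$ according to whether $i \in \mathbf{i}$ or not: the kernels with $i \in \mathbf{i}$ produce $\binom{m_N-1}{p-1} \binom{n_N}{q} \widehat{\mu}^{h,(i)}_N$, while those with $i \notin \mathbf{i}$ produce $\binom{m_N-1}{p} \binom{n_N}{q} U_N^{h,(-i,\emptyset)}$. Using $\binom{m_N-1}{p-1}/\binom{m_N}{p} = p/m_N$ and $\binom{m_N-1}{p}/\binom{m_N}{p} = (m_N - p)/m_N$, this decomposition reads
\begin{equation*}
    U^h_N = \frac{p}{m_N} \widehat{\mu}^{h,(i)}_N + \frac{m_N - p}{m_N} U_N^{h,(-i,\emptyset)},
\end{equation*}
which rearranges into $\widehat{\mu}^{h,(i)}_N - U^h_N = \tfrac{m_N - p}{p} \bigl(U^h_N - U_N^{h,(-i,\emptyset)}\bigr)$.

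Substituting this identity into $\widehat{v}^{h;1,0}_N = \tfrac{1}{m_N-1} \sum_i (\widehat{\mu}^{h,(i)}_N - U^h_N)^2$ immediately yields \eqref{eq:est_v10_alternate}; the formula \eqref{eq:est_v01_alternate} for $\widehat{v}^{h;0,1}_N$ follows by the symmetric argument swapping rows and columns, with $p$ replaced by $q$ and $m_N$ replaced by $n_N$. There is no real obstacle in this proof, only careful bookkeeping of binomial coefficients; the only point requiring a moment of attention is verifying that the mean of the $\widehat{\mu}^{h,(i)}_N$ is exactly $U^h_N$, which is a consequence of the symmetric way each submatrix is counted.
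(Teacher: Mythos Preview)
Your proof is correct and follows essentially the same approach as the paper. The paper's proof is terser: it simply states the decomposition relation $\binom{m_N-1}{p-1}\binom{n_N}{q}\widehat{\mu}^{h,(i)}_N = \binom{m_N}{p}\binom{n_N}{q}U^h_N - \binom{m_N-1}{p}\binom{n_N}{q}U_N^{h,(-i,\emptyset)}$ (your Step~3) and says the rest follows from the definition, whereas you make explicit the pairwise-difference-to-centered-sum-of-squares identity and the identification $\bar{\mu}=U^h_N$ that the paper leaves to the reader.
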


\begin{proof}
    The relations are obtained from the definition of $\widehat{v}^{h;1,0}_N$ and $\widehat{v}^{h;0,1}_N$ and noticing that 
    \begin{equation*}
        \binom{m_N-1}{p-1} \binom{n_N}{q} \widehat{\mu}^{h,(i)}_N = \binom{m_N}{p} \binom{n_N}{q} U^h_N - \binom{m_N-1}{p} \binom{n_N}{q} U_N^{h,(-i,\emptyset)}
    \end{equation*}
    and 
    \begin{equation*}
        \binom{m_N}{p} \binom{n_N-1}{q-1} \widehat{\nu}^{h,(j)}_N = \binom{m_N}{p} \binom{n_N}{q} U^h_N - \binom{m_N}{p} \binom{n_N-1}{q} U_N^{h,(\emptyset,-j)}.
    \end{equation*}
\end{proof}

\begin{remark}
    This is one alternative method to compute $\widehat{V}^h_N$, but not necessarily the optimal one. With this form, the computational cost of $\widehat{V}^h_N$ is $O(N \Gamma(N))$, assuming the cost of computing a $U$-statistic is $O(\Gamma(N))$, where $\Gamma$ is an  increasing function. This can be outperformed by the naive method in some situations, for example when $N^{p + q} = O(\Gamma(N))$.
\end{remark}

\begin{remark}
    The alternative form of $\widehat{v}^{h;1,0}_N$ and $\widehat{v}^{h;0,1}_N$ is reminiscent of the jackknife estimator for the variance of $U$-statistics of one-dimensional arrays \cite{arvesen1969jackknifing}, but the two are well distinct. In the case where $Y$ is a one-dimensional array, the $U$-statistic associated to the kernel $h : \mathbb{R}^p \rightarrow \mathbb{R}$ is
    \begin{equation*}
        U^h_N = \binom{N}{p}^{-1} \sum_{\mathbf{i} \in \mathcal{P}_p( \llbracket N \rrbracket)} h(X_{\mathbf{i}}).
    \end{equation*}
    The jackknife estimator of the asymptotic variance of this $U$-statistic is 
    \begin{equation*}
        \widehat{V}^{h,J}_N = (N-1) \sum_{i=1}^N (U^h_N - U_N^{h,(-i)})^2,
    \end{equation*}
    where 
    \begin{equation*}
         U_N^{h,(-i)} = \binom{N}{p}^{-1} \sum_{\mathbf{i} \in \mathcal{P}_p( \llbracket N \rrbracket \backslash \{i\})} h(X_{\mathbf{i}}).
    \end{equation*}
    In fact, our estimator is closer to Sen's estimator of the asymptotic variance of \cite{sen1960some, sen1977some} which depends on the kernel size and is related to the jackknife estimator $\widehat{V}^{h,S}_N = (N-k)^2/(N-1)^2 \widehat{V}^{h,J}_N$.
    However, it is unclear how these estimators could be translated in a two-dimensional setup where $Y$ is a matrix instead of a vector, especially how to define the analog of $U_N^{h,(-i)}$.  
\end{remark}

\subsection{Extension to functions of $U$-statistics}

In the case of a function of $U$-statistics $g(U^{h_1}_N, ..., U^{h_D}_N)$, the asymptotic normality result of Corollary~\ref{cor:delta_method} applies and the asymptotic variance to be estimated is 
\begin{equation*}
    V^\delta = \nabla g(\theta)^T \Sigma^{h_1,...,h_D} \nabla g(\theta) \neq 0,
\end{equation*}
where $\theta = (U^{h_1}_\infty, ..., U^{h_D}_\infty)$. Similar to the estimator for the asymptotic variance of $V^h$, we suggest an estimator for the covariance matrix $\Sigma^{h_1,...,h_D} = \left(C^{h_k, h_\ell} \right)_{1 \le k,\ell \le D}$.

For each kernel $h_k$, $1 \le k \le D$, let $\widehat{\mu}^{h_k,(i)}_N$ and $\widehat{\nu}^{h_k,(j)}_N$ be the respective estimators of the conditional expectations $\psi^{1,0}_{(\{i\}, \emptyset)} h_k$ and $\psi^{0,1}_{(\emptyset, \{j\})} h_k$, as defined in equations~\eqref{eq:mu} and~\eqref{eq:nu}. Define
\begin{equation*}
    \widehat{c}^{h_k, h_\ell; 1,0}_N := \binom{m_N}{2}^{-1} \sum_{1 \le i_1 < i_2 \le m_N} \frac{(\widehat{\mu}^{h_k,(i_1)}_N - \widehat{\mu}^{h_k,(i_2)}_N)(\widehat{\mu}^{h_\ell,(i_1)}_N - \widehat{\mu}^{h_\ell,(i_2)}_N)}{2}
\end{equation*}
and
\begin{equation*}
    \widehat{c}^{h_k, h_\ell; 0,1}_N := \binom{n_N}{2}^{-1} \sum_{1 \le j_1 < j_2 \le n_N} \frac{(\widehat{\nu}^{h_k,(j_1)}_N - \widehat{\nu}^{h_k,(j_2)}_N)(\widehat{\nu}^{h_\ell,(j_1)}_N - \widehat{\nu}^{h_\ell,(j_2)}_N)}{2}.
\end{equation*}
Then, for two kernels $h_k$ and $h_\ell$, 
\begin{equation*}
    \widehat{C}^{h_k, h_\ell}_N := \frac{p^2}{\rho}\widehat{c}^{h_k, h_\ell; 1,0}_N + \frac{q^2}{1-\rho} \widehat{c}^{h_k, h_\ell; 0,1}_N,
\end{equation*}
is an estimator of the asymptotic covariance term $C^{h_k, h_\ell}$. With a similar proof as for Theorem~\ref{th:consistency_est_var}, the following theorem ensures the consistency of this estimator.
\begin{theorem}
    For two kernel functions $h_k$ and $h_\ell$, we have $\widehat{c}^{h_k, h_\ell; 1,0}_N \xrightarrow[N \rightarrow \infty]{\mathbb{P}} c^{1,0}_{h_k, h_\ell}$ and $\widehat{c}^{h_k, h_\ell; 0,1}_N \xrightarrow[N \rightarrow \infty]{\mathbb{P}} c^{0,1}_{h_k, h_\ell}$. As a consequence, $\widehat{C}^{h_k, h_\ell}_N \xrightarrow[N \rightarrow \infty]{\mathbb{P}} C^{h_k, h_\ell}$.
    \label{th:consistency_est_cov}
\end{theorem}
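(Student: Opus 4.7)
The plan is to reduce to Theorem~\ref{th:consistency_est_var} via a polarization argument. Applying the identity $4ab = (a+b)^2 - (a-b)^2$ with $a = \widehat{\mu}^{h_k,(i_1)}_N - \widehat{\mu}^{h_k,(i_2)}_N$ and $b = \widehat{\mu}^{h_\ell,(i_1)}_N - \widehat{\mu}^{h_\ell,(i_2)}_N$, and using the linearity in the kernel of both the estimator $\widehat{\mu}^{\cdot,(i)}_N$ (clear from~\eqref{eq:mu}) and the conditional expectation $\psi^{1,0}_{(\{i\},\emptyset)}\cdot$, I obtain
\begin{equation*}
\widehat{c}^{h_k, h_\ell; 1,0}_N = \tfrac{1}{4}\left(\widehat{v}^{h_k+h_\ell;1,0}_N - \widehat{v}^{h_k-h_\ell;1,0}_N\right), \qquad c^{1,0}_{h_k, h_\ell} = \tfrac{1}{4}\left(v^{1,0}_{h_k+h_\ell} - v^{1,0}_{h_k-h_\ell}\right),
\end{equation*}
where the second identity is the usual polarization of the covariance of the two scalar random variables $\psi^{1,0}_{(\{1\},\emptyset)} h_k$ and $\psi^{1,0}_{(\{1\},\emptyset)} h_\ell$.

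Assuming the two kernels share a common size $p \times q$ (otherwise, first symmetrize each to the larger common size via~\eqref{eq:kernel_symmetric_version}, which leaves the associated $U$-statistic and conditional expectations unchanged), the kernels $h_k + h_\ell$ and $h_k - h_\ell$ are again symmetric $p \times q$ kernels with finite second moments, so Theorem~\ref{th:consistency_est_var} applies and yields
\begin{equation*}
\widehat{v}^{h_k \pm h_\ell;1,0}_N \xrightarrow[N \rightarrow \infty]{\mathbb{P}} v^{1,0}_{h_k \pm h_\ell}.
\end{equation*}
The continuous mapping theorem then delivers $\widehat{c}^{h_k, h_\ell; 1,0}_N \xrightarrow{\mathbb{P}} c^{1,0}_{h_k, h_\ell}$. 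The same reasoning, with rows and columns exchanged, handles $\widehat{c}^{h_k, h_\ell; 0,1}_N$. A final application of continuous mapping to the fixed linear combination defining $\widehat{C}^{h_k, h_\ell}_N$ closes the proof.

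I expect no serious obstacle on this route; the only slightly delicate point is verifying that the polarization identity is compatible with the definition of $\widehat{\mu}^{h,(i)}_N$, which follows immediately from its linearity in $h$. A more pedestrian alternative is to mirror the proof of Theorem~\ref{th:consistency_est_var} directly: show asymptotic unbiasedness $\E[\widehat{c}^{h_k, h_\ell; 1,0}_N] \to c^{1,0}_{h_k, h_\ell}$ and vanishing variance $\V[\widehat{c}^{h_k, h_\ell; 1,0}_N] \to 0$ by adapting Propositions~\ref{prop:est_var_unbias} and~\ref{prop:est_var_variance}, replacing every squared difference by a product of differences, then invoke Chebyshev's inequality as in Section~\ref{sub:variance_estimator}. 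The main obstacle on the direct route is purely combinatorial bookkeeping: the expansion of the product of two differences of $\widehat{\mu}$ estimators produces conditional expectations $\psi^{r,c}$ indexed by intersections of row and column sets coming from two distinct kernels, and tracking cancellations is tedious. Polarization sidesteps this entirely.
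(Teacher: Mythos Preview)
Your polarization argument is correct and is genuinely different from what the paper does. The paper simply states that the proof is ``similar'' to that of Theorem~\ref{th:consistency_est_var}, i.e.\ it adapts Propositions~\ref{prop:est_var_unbias} and~\ref{prop:est_var_variance} to products of differences and then invokes Chebyshev --- exactly the ``pedestrian alternative'' you describe at the end. Your route is cleaner: by writing $\widehat{c}^{h_k,h_\ell;1,0}_N = \tfrac{1}{4}(\widehat{v}^{h_k+h_\ell;1,0}_N - \widehat{v}^{h_k-h_\ell;1,0}_N)$ you reuse Theorem~\ref{th:consistency_est_var} as a black box and avoid repeating the combinatorial bookkeeping of Lemma~\ref{lem:expectation_combi}. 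This buys brevity and modularity; the paper's approach would buy slightly sharper control of the $O(N^{-1})$ bias term if one cared about rates, but for convergence in probability your argument is strictly simpler.

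Two small corrections. First, your reference to~\eqref{eq:kernel_symmetric_version} for handling kernels of different sizes is wrong: that equation symmetrizes a kernel, it does not enlarge its domain. The relevant tool is the kernel \emph{extension} defined before Lemma~\ref{lem:kernel_extension} in Appendix~\ref{app:asymptotic_normality}. Second, extension does not leave the conditional expectations $\psi^{1,0}$ literally unchanged --- one picks up an affine rescaling $\psi^{1,0}_{(\{i\},\emptyset)}\tilde{h} = \tfrac{p}{p'}\psi^{1,0}_{(\{i\},\emptyset)}h + \text{const}$, and likewise for $\widehat{\mu}^{\cdot,(i)}_N$. This does not break your argument (differences $\widehat{\mu}^{(i_1)}-\widehat{\mu}^{(i_2)}$ kill the constant, and the scaling factor cancels across both sides of the polarization identity once you also track the $p^2$ prefactor in $\widehat{C}$), but the sentence ``which leaves the associated $U$-statistic and conditional expectations unchanged'' is not accurate as written. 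For same-size kernels none of this arises and your proof goes through verbatim.
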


Thus, for linearly independent kernel functions $(h_1, ..., h_D)$, the entries of the matrix $\widehat{\Sigma}^{h_1,...,h_D}_N := \left(\widehat{C}^{h_k, h_\ell}_N\right)_{1 \le k,\ell \le D}$ converge to the entries of $\Sigma^{h_1,...,h_D}$. Set 
\begin{equation*}
    \widehat{V}^\delta_N := \nabla g(U^{h_1}_N, ..., U^{h_D}_N)^T \widehat{\Sigma}^{h_1,...,h_D}_N \nabla g(U^{h_1}_N, ..., U^{h_D}_N),
\end{equation*}
then we have $\widehat{V}^\delta_N \xrightarrow[N \rightarrow \infty]{\mathbb{P}} V^\delta$. Finally, the ``studentized'' statistic obtained by plug-in also stands.
\begin{proposition}
    If $V^\delta > 0$, then
    \begin{equation*}
        \sqrt{\frac{N}{\widehat{V}^\delta_N}}\left(g(U^{h_1}_N, ..., U^{h_D}_N) - g(\theta)\right) \xrightarrow[N \rightarrow \infty]{\mathcal{D}} \mathcal{N}(0, 1).
    \end{equation*}
    \label{prop:delta_method_plugin}
\end{proposition}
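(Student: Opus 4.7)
The plan is to assemble Proposition~\ref{prop:delta_method_plugin} from three ingredients that are all in place by the time it appears: the delta-method type Central Limit Theorem for $g(U^{h_1}_N, \dots, U^{h_D}_N)$ (Corollary~\ref{cor:delta_method}), the consistency of the covariance matrix estimator (Theorem~\ref{th:consistency_est_cov} applied entrywise), and Slutsky's theorem.

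First I would recall that Corollary~\ref{cor:delta_method} gives $\sqrt{N}(g(U^{h_1}_N, \dots, U^{h_D}_N) - g(\theta)) \xrightarrow{\mathcal{D}} \mathcal{N}(0, V^\delta)$ with $V^\delta = \nabla g(\theta)^T \Sigma^{h_1,\dots,h_D} \nabla g(\theta)$. Next, I would verify that $\widehat{V}^\delta_N \xrightarrow{\mathbb{P}} V^\delta$. By Theorem~\ref{th:consistency_est_cov} applied to each pair $(h_k, h_\ell)$, the entries of $\widehat{\Sigma}^{h_1,\dots,h_D}_N$ converge in probability to the corresponding entries of $\Sigma^{h_1,\dots,h_D}$. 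On the other hand, each $U^{h_k}_N$ converges in probability to $U^{h_k}_\infty$ (an immediate consequence of Theorem~\ref{th:asymptotic_normality} after dividing by $\sqrt{N}$, or equivalently Slutsky applied to that CLT). Since $g$ is continuously differentiable on a neighbourhood of $\theta$ (the regularity assumption implicit in invoking Corollary~\ref{cor:delta_method}), the continuous mapping theorem yields
\begin{equation*}
    \nabla g(U^{h_1}_N, \dots, U^{h_D}_N) \xrightarrow[N \to \infty]{\mathbb{P}} \nabla g(\theta).
\end{equation*}
Combining these two convergences through the continuous mapping theorem applied to the bilinear form $(v, M) \mapsto v^T M v$ gives $\widehat{V}^\delta_N \xrightarrow{\mathbb{P}} V^\delta$.

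Finally, since $V^\delta > 0$, the map $x \mapsto 1/\sqrt{x}$ is continuous at $V^\delta$, so $\sqrt{V^\delta / \widehat{V}^\delta_N} \xrightarrow{\mathbb{P}} 1$. Writing
\begin{equation*}
    \sqrt{\frac{N}{\widehat{V}^\delta_N}}\bigl(g(U^{h_1}_N, \dots, U^{h_D}_N) - g(\theta)\bigr) = \sqrt{\frac{V^\delta}{\widehat{V}^\delta_N}} \cdot \frac{\sqrt{N}}{\sqrt{V^\delta}}\bigl(g(U^{h_1}_N, \dots, U^{h_D}_N) - g(\theta)\bigr),
\end{equation*}
Slutsky's theorem combines the $\mathcal{N}(0,1)$ limit of the second factor with the probability-$1$ limit of the first factor to give the announced convergence. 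There is no real obstacle here: all the hard work has been done in Corollary~\ref{cor:delta_method} and Theorem~\ref{th:consistency_est_cov}, and this proposition is simply the studentized repackaging via Slutsky's lemma. The only point worth a line of justification is the joint convergence needed to deduce consistency of $\widehat{V}^\delta_N$ from consistency of its ingredients, which is free because convergence in probability is preserved under continuous transformations of finite-dimensional vectors.
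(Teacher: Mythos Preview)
Your proposal is correct and follows exactly the route the paper takes: the paper does not give a formal proof of this proposition but simply observes in the preceding lines that the entries of $\widehat{\Sigma}^{h_1,\dots,h_D}_N$ converge to those of $\Sigma^{h_1,\dots,h_D}$ by Theorem~\ref{th:consistency_est_cov}, deduces $\widehat{V}^\delta_N \xrightarrow{\mathbb{P}} V^\delta$, and states that the studentized statistic then follows by plug-in (i.e.\ Slutsky). Your write-up is a faithful and more explicit version of this same argument.
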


\section{Methodology for bipartite network analysis}
\label{sec:examples}
\label{SEC:EXAMPLES}

The previous section introduced a generic and consistent estimator for the asymptotic variance of $U$-statistics on RCE matrices. This estimator can be used to develop a methodology for statistical inference on bipartite network data. In particular, the RCE framework, combined with the versatility provided by Proposition~\ref{prop:delta_method_plugin} in selecting network statistics, supports a broad range of network analysis tasks through model parameter estimation and hypothesis testing. In this section, we first examine the network comparison problem as a statistical inference task. We then illustrate the flexibility of this framework by presenting examples of RCE models and $U$-statistic-based network statistics applicable to various network analysis problems.

\subsection{Example of inference task: network comparison} \label{sec:netComp}

Network comparison has a long history in network analysis literature~\cite{emmert2016fifty, tantardini2019comparing}. Few comparison methods use random network models~\cite{asta2014geometric, maugis2020testing, leminh2023ustatistics}. However, there are several advantages to using model-based approaches. Indeed, random models define a probability distribution of networks that can be used to derive statistical guarantees. Also, models can be used to control the sources of heterogeneity in networks. The comparison can be made with respect to a quantity of interest, which makes it easier to interpret. Now, we show how to define a test statistic for model-based network comparison with our framework. Naturally, $U$-statistics define network statistics characterizing a network. They can be used to analyze a single network, but their use is easily extended to compare different networks.

Let $Y^A$ and $Y^B$ be two independent networks of respective sizes $m^A_N \times n^A_N$ and $m^B_N \times n^B_N$. Define a network quantity of interest $\theta$. These two networks are generated by two models, leading to different values $\theta^A$ and $\theta^B$ of this quantity for the two models. Let $\widehat{\theta}_N$ be an estimator for this quantity of interest. If $\widehat{\theta}_N$ is a $U$-statistic or a function of $U$-statistics, then as previously, we have both 
\begin{equation*}
    \sqrt{N}(\widehat{\theta}_N(Y^A) - \theta^A) \xrightarrow[N \rightarrow \infty]{\mathcal{D}} \mathcal{N}(0, V^A) 
\end{equation*}
and
\begin{equation*}
    \sqrt{N}(\widehat{\theta}_N(Y^B) - \theta^B) \xrightarrow[N \rightarrow \infty]{\mathcal{D}} \mathcal{N}(0, V^B),
\end{equation*}
where $V^A$ and $V^B$ are the asymptotic variances.

To compare $\theta^A$ and $\theta^B$, we can confront the test hypotheses $\mathcal{H}_0: \theta^A = \theta^B$ and $\mathcal{H}_1: \theta^A \neq \theta^B$. Because $Y^A$ and $Y^B$ are independent, the previous convergence results give 
\begin{equation*}
    \sqrt{\frac{N}{V^A + V^B}} \bigg(\widehat{\delta}_N(Y^A,Y^B) - (\theta^A - \theta^B)\bigg) \xrightarrow[N \rightarrow \infty]{\mathcal{D}} \mathcal{N}(0, 1),
\end{equation*}
where $\widehat{\delta}_N(Y^A,Y^B) := \widehat{\theta}_N(Y^A) - \widehat{\theta}_N(Y^B)$. Therefore, using the consistent estimators $\widehat{V}^A_N$ and $\widehat{V}^B_N$ of the asymptotic variances, the test statistic
\begin{equation*}
    Z_N(Y^A, Y^B) := \sqrt{\frac{N}{\widehat{V}^A_N + \widehat{V}^B_N}} \widehat{\delta}_N(Y^A,Y^B)
\end{equation*}
admits the convergence result
\begin{equation*}
    Z_N(Y^A, Y^B) - \sqrt{\frac{N}{\widehat{V}^A_N + \widehat{V}^B_N}} (\theta^A - \theta^B) \xrightarrow[N \rightarrow \infty]{\mathcal{D}}  \mathcal{N}(0,1).
\end{equation*}
Under $\mathcal{H}_0$, we have $\theta^A - \theta^B = 0$, so $Z_N(Y^A, Y^B)$ converges in distribution to a standard Gaussian variable. 

\subsection{Examples of RCE models for networks}

\paragraph{Bipartite Expected Degree Distribution model.} The Bipartite Expected Degree Distribution (BEDD) model, suggested by~\cite{ouadah2022motif}, is a binary graph model characterized by two distributions from which the row and column-nodes draw a weight. The probability of a connection between two nodes is fully determined by the corresponding row and column weight distributions. The distribution of a graph following a BEDD model can be written using latent variables $(\xi_i)_{i \ge 1}$ and $(\eta_j)_{j \ge 1}$ corresponding to the row and column-nodes of the graph:
\begin{equation}
\begin{split}
    \xi_i, \eta_j &\overset{\text{i.i.d.}}{\sim} \mathcal{U}[0,1] \\ 
    Y_{ij}~ \mid ~\xi_i, \eta_j &\sim \mathcal{B}(\lambda f(\xi_i) g(\eta_j)).
\end{split}
\label{eq:wbedd}
\end{equation} 
where
\begin{itemize}
    \item $f$ and $g$ are positive, càdlàg, nondecreasing, bounded, and normalized ($\int f = \int g = 1$) real functions $[0,1] \rightarrow \mathbb{R}_+$,
    \item $\lambda$ is a positive real number such that $\lambda \le \lVert f \rVert^{-1}_\infty \lVert g \rVert^{-1}_\infty$, 
    \item $\mathcal{B}$ is the Bernoulli distribution. 
\end{itemize}
The BEDD model is an RCE version of the Expected Degree Sequence model of~\cite{chung2002average} but in the BEDD, the row weights $f(\xi_i)$ and column weights $g(\eta_j)$ are exchangeable. Indeed, $f$ and $g$ characterize the weight distributions of the row and column-nodes whereas the weights are fixed in the Expected Degree Sequence model. The triplet $(\lambda, f, g)$ is called the BEDD parameters. 

\paragraph{Latent Block model.} The Latent Block model (LBM)~\cite{govaert2003clustering} is a binary graph model characterized by a partition of row and column-nodes in several groups. It can be considered as a bipartite extension of the Stochastic Block model~\cite{nowicki2001estimation}. The probability of interaction between two nodes is fully determined by the groups to which they belong. All the nodes have the same probability of belonging to each group. The distribution of an LBM is most commonly written using independent latent variables for the node attribution in a group $(Z_i)_{i \ge 1}$ and $(W_j)_{j \ge 1}$ corresponding to the row and column-nodes of the graph:
\begin{equation}
\begin{split}
    &Z_i \overset{\text{i.i.d.}}{\sim} \mathcal{M}(1;\boldsymbol{\alpha}) \\ 
    &W_j \overset{\text{i.i.d.}}{\sim} \mathcal{M}(1;\boldsymbol{\beta}) \\ 
    &Y_{ij} \mid Z_i=k, W_j=\ell \sim \mathcal{B}(\pi_{k\ell}),
\end{split}
\label{eq:lbm}
\end{equation} 
where $\boldsymbol{\alpha} = (\alpha_1,...,\alpha_K)$ and $\boldsymbol{\beta} = (\beta_1,...,\beta_L)$ are the probability vectors of the rows and the columns and $\boldsymbol{\pi} = (\pi_{k \ell})_{1 \le k \le K, 1 \le \ell \le L} \in [0,1]^{KL}$ is a matrix of probabilities. The LBM is an RCE model since the group attribution variables of the nodes are exchangeable.

\paragraph{W-graph model.} Let $w$ be a function of $[0,1]^2 \rightarrow [0,1]$. The W-graph model associated to $w$ is defined by
\begin{equation}
  \begin{split}
    &\xi_i, \eta_j \overset{\text{i.i.d.}}{\sim} \mathcal{U}[0,1] \\
    &Y_{ij} \mid \xi_i, \eta_j \sim \mathcal{B}\left(w(\xi_i, \eta_j)\right),
  \end{split}
  \label{eq:graphon}
\end{equation}
$w$ is sometimes referred to as a graphon. For identification reasons, we assume $\int w(\cdot,\eta) d\eta$ and $\int w(\xi,\cdot) d\xi$ to be càdlàg, nondecreasing and bounded. Any RCE model can be written as a W-graph model~\cite{diaconis2008graph} so can the BEDD model and the LBM be expressed with a graphon. For the BEDD model, $w(\xi_i, \eta_j) = \lambda f(\xi_i) g(\eta_j)$ where $f$ and $g$ are those of Equation~\eqref{eq:wbedd}. For the LBM, $w(\xi_i,\eta_j) = \sum_{k,\ell} \pi_{k \ell} \mathds{1}\{ s(\xi_i) = k \} \mathds{1}\{ t(\eta_{j}) = \ell \}$ where $s(\xi_i) = 1 + \sum_{k = 1}^K \mathds{1}\{ \xi_i > \sum_{k'=1}^k \alpha_{k'}\}$, $t(\eta_j) = 1 + \sum_{\ell = 1}^L \mathds{1}\{ \eta_j > \sum_{\ell'=1}^\ell \beta_{\ell'} \}$ and $\boldsymbol{\alpha}$, $\boldsymbol{\beta}$ and $\boldsymbol{\pi}$ are those of Equation~\eqref{eq:lbm}.

\paragraph{Extension to weighted graphs.} All the models presented above are defined for binary graphs. However, one can extend it to weighted graphs by switching the Bernoulli distributions with another. In particular, the Poisson distribution is particularly suitable for count values in $\mathbb{N}$:
\begin{equation}
  \begin{split}
    &\xi_i, \eta_j \overset{\text{i.i.d.}}{\sim} \mathcal{U}[0,1] \\
    &Y_{ij} \mid \xi_i, \eta_j \sim \mathcal{P}\left(w(\xi_i, \eta_j)\right),
  \end{split}
  \label{eq:graphon_weighted}
\end{equation}
where $w : [0,1]^2 \rightarrow \mathbb{R}_+$ is the graphon of this model. Similarly, one can define a Poisson-BEDD model or Poisson-LBM. 

\subsection{Examples of kernel functions for network analysis}

We present three examples of statistics which are $U$-statistics or functions of $U$-statistics. Sometimes, a kernel function $h$ has a long expression, especially when it is the symmetric version of some simpler function $h^0$, as in~\eqref{eq:kernel_symmetric_version}. In this case, we introduce the kernels of interest with $h^0$ instead of $h$, but the $U$-statistics and asymptotic results always apply to the symmetric version $h$.

\paragraph{Motif counts.} 
\label{sub:example_motif}

Motifs are the name given to small subgraphs. Their occurrences in the complete network can be counted. Motif counts are useful statistics for random graphs as they provide information on the network local structure~\cite{shen2002network, kashtan2004topological}. Many random network models hinge on motif frequencies. In the Exponential Random Graph Model \cite{frank1986markov}, motif frequencies are sufficient statistics. The $dk$-random graph model \cite{orsini2015quantifying} also largely relies on motif frequencies. 

Their asymptotic properties are widely studied and a large number of studies use motif counts to perform statistical tests \cite{reinert2010random, bickel2011method, bhattacharyya2015subsampling, coulson2016poisson, gao2017testing, maugis2020testing, naulet2021bootstrap, ouadah2022motif}. Our framework is particularly well adapted to the use of motif counts for statistical tests because frequencies are $U$-statistics with kernel functions of the same size as the motifs. 

In many applications, motifs are viewed as elementary building blocks that can be specifically interpreted. This is the case in molecular biology \cite{shen2002network, prvzulj2004modeling, ali2014alignment}, neurology \cite{zhao2011synchronization}, sociology \cite{bearman2004chains}, evolution \cite{przytycka2006important} and ecology \cite{stouffer2007evidence, baker2015species}. In particular, Figure 3 of~\cite{simmons2019motifs} lists all the bipartite motifs consisting of from 2 to 6 nodes. For example, their motif 6 represents a $2 \times 2$ clique, where every node is connected to all others (Figure~\ref{fig:motif_6_10}). Their motif 14 represents a path between two column-nodes, passing through another column-node and two row-nodes (Figure~\ref{fig:motif_6_10}). The latter indicates an indirect interaction between the row-nodes, hinging on the middle row-node. \cite{lanuza2023non} found that motif 6 is over-represented in plant-pollinator interaction networks, while motif 14 is under-represented, compared to Erd\"{o}s-R\'{e}nyi graphs of the same density.

\begin{figure}[!tb]
\centering
\includegraphics[width=0.2\linewidth]{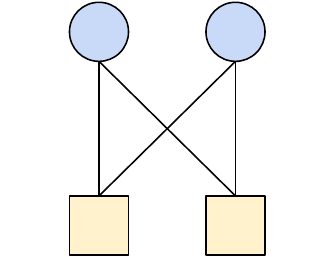}
\includegraphics[width=0.2\linewidth]{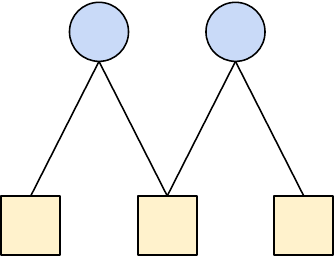}
\caption{Motif 6 and motif 14, counted by $U^{h_6}_N$ and $U^{h_{14}}_N$}
\label{fig:motif_6_10}
\end{figure}

These motifs can be counted with $U^{h_6}_N$ and $U^{h_{14}}_N$ the $U$-statistics associated to the kernels $h_6$ and $h_{14}$, that we introduce as the symmetric version of the following functions:
\begin{align}
    h^0_6(Y_{(i_1, i_2; j_1, j_2)}) &= Y_{i_1j_1}Y_{i_1j_2}Y_{i_2j_1}Y_{i_2j_2}, \nonumber \\
    h^0_{14}(Y_{(i_1, i_2; j_1, j_2, j_3)}) &= Y_{i_1j_1}Y_{i_1j_2}Y_{i_2j_2}Y_{i_2j_3}(1-Y_{i_2j_1})(1-Y_{i_1j_3}).
    \label{eq:motif_kernels}
\end{align}

\begin{remark}
    With this example, it is very apparent that introducing simple, non-symmetric kernel functions and then symmetrizing them is way simpler than introducing symmetric kernel functions. $h^0_6$ is already symmetric, so $h_6 = h^0_6$. However, $h^0_{14}$ is not symmetric and one can sum over all the permutations of the indices to make it symmetric using formula~\eqref{eq:kernel_symmetric_version}. Because of the automorphisms of motif 14, it involves only $6$ permutations instead of $2! 3! = 12$.
    \begin{equation*}
    \begin{split}
        h_{14}(Y_{(i_1, i_2; j_1, j_2, j_3)})
        &= \frac{1}{6} Y_{i_1j_1}Y_{i_1j_2}Y_{i_2j_2}Y_{i_2j_3}(1-Y_{i_2j_1})(1-Y_{i_1j_3}) \\
        &+ \frac{1}{6} Y_{i_1j_2}Y_{i_1j_3}Y_{i_2j_3}Y_{i_2j_1}(1-Y_{i_1j_1})(1-Y_{i_2j_2}) \\
        &+ \frac{1}{6} Y_{i_1j_3}Y_{i_1j_1}Y_{i_2j_1}Y_{i_2j_2}(1-Y_{i_1j_2})(1-Y_{i_2j_3})  \\
        &+ \frac{1}{6} Y_{i_2j_1}Y_{i_2j_2}Y_{i_1j_2}Y_{i_1j_3}(1-Y_{i_1j_1})(1-Y_{i_2j_3})  \\
        &+ \frac{1}{6} Y_{i_2j_2}Y_{i_2j_3}Y_{i_1j_3}Y_{i_1j_1}(1-Y_{i_2j_1})(1-Y_{i_1j_2}) \\
        &+ \frac{1}{6} Y_{i_2j_3}Y_{i_2j_1}Y_{i_1j_1}Y_{i_1j_2}(1-Y_{i_2j_2})(1-Y_{i_1j_3}).
    \end{split}
    \end{equation*}
\end{remark}

Using Proposition~\ref{prop:asymptotic_normality_plugin}, the following studentized statistics converge to a standard normal distribution 
\begin{equation*}
    Z^6_N := \sqrt{\frac{N}{\widehat{V}^{h_6}_N}}\left(U^{h_6}_N - U^{h_6}_\infty\right) \xrightarrow[N \rightarrow \infty]{\mathcal{D}} \mathcal{N}(0, 1),
\end{equation*}
\begin{equation*}
    Z^{14}_N := \sqrt{\frac{N}{\widehat{V}^{h_{14}}_N}}\left(U^{h_{14}}_N - U^{h_{14}}_\infty\right) \xrightarrow[N \rightarrow \infty]{\mathcal{D}} \mathcal{N}(0, 1).
\end{equation*}

\paragraph{Product graphon.}
\label{sub:example_graphon}

The $W$-graph model encompasses all the dissociated RCE models. One can make use of this model to compare the form of the graphon that has generated a graph to a known form. One interesting form is the product form, which corresponds to an absence of specific interaction between row-nodes and column-nodes. For example, one can normalize the graphon in the Poisson $W$-graph model described by~\eqref{eq:graphon_weighted} such that 
\begin{equation*}
    Y_{ij} \mid \xi_i, \eta_j \sim \mathcal{P}(\lambda \bar{w}(\xi_i, \eta_j)),
    \label{eq:wgraph_normalized}
\end{equation*}
where $\lambda > 0$ and $\iint \bar{w} = 1$. Call $\bar{w}$ a normalized graphon. Define $f$ and $g$ as the marginals of $\bar{w}$, i.e.
\begin{align}
    f & = \int \bar{w}(\cdot,\eta) d\eta, & 
    g & = \int \bar{w}(\xi,\cdot) d\xi.
    \label{eq:def_marginals}
\end{align}
A density-free dissimilarity measure between a graphon and its corresponding product form could be the quantity $d(\bar{w})$ defined as 
\begin{equation}
    d(\bar{w}) := \lvert \lvert \bar{w} - fg \rvert \rvert^2_2 = \iint \left(\bar{w}(\xi, \eta) - f(\xi) g(\eta) \right)^2 d\xi d\eta.
\end{equation}

Observe that $d(\bar{w}) = 0$ if and only if $\bar{w}$ is of product form, e.g. $\bar{w}(\xi,\eta) = f(\xi)g(\eta)$ almost everywhere in $[0,1]^2$. In this case, the $W$-graph model~\eqref{eq:wgraph_normalized} is a BEDD model, and $f$ and $g$ have the same role as in~\eqref{eq:wbedd}. 

We suggest an estimator for $d(\bar{w})$. Let $h_A, h_B, h_C$ and $h_D$ be kernel functions of respective size $2 \times 2$, $1 \times 2$, $2 \times 1$ and $1 \times 1$ defined as in Table~\ref{tab:kernels_1}. These kernel functions are linearly independent, so Corollary~\ref{cor:joint_asymptotic_normality} applies to the associated $U$-statistics $(U^{h_A}_N,U^{h_B}_N,U^{h_C}_N,U^{h_D}_N)$, and they are jointly asymptotically normal with asymptotic covariance matrix $\Sigma^{h_A,h_B,h_C,h_D}$.

\begin{table}[tb!]
\centering
\begin{tabular}{ l l l  }
 \toprule
 & Kernel & Expectation \\
 \midrule
 $h_A$ & $h^0_{A,1} - 2 h^0_{A,2}$ & $\lambda^3 \iint \bar{w}(\xi,\eta) (\bar{w}(\xi,\eta) - 2 f(\xi)g(\eta)) d\xi d\eta$ \\
 $h_{A,1}$ & $Y_{i_1j_1} (Y_{i_1j_1} - 1)  Y_{i_2j_2}$ & $\lambda^3 \iint \bar{w}(\xi,\eta)^2 d\xi d\eta$ \\
 $h_{A,2}$ & $Y_{i_1j_1} Y_{i_1j_2} Y_{i_2j_2}$ & $\lambda^3 \iint \bar{w}(\xi,\eta) f(\xi)g(\eta) d\xi d\eta$ \\
 $h_B$ & $Y_{i_1j_1} Y_{i_1j_2}$ & $\lambda^2 \int f(\xi)^2 d\xi$ \\
 $h_C$ & $Y_{i_1j_1} Y_{i_2j_1}$ & $\lambda^2 \int g(\eta)^2 d\eta$ \\
 $h_D$ & $\displaystyle{Y_{i_1j_1}}$ & $\lambda$ \\
 \bottomrule
\end{tabular}
\caption{Kernel functions (before symmetrization) used for the estimation of $d(\bar{w})$. The expectations are given by Lemma~\ref{lem:graphon_kernels} in Appendix.}
\label{tab:kernels_1}
\end{table}

We define the estimator $\widehat{d}_N$ of $d(\bar{w})$ as
\begin{equation*}
    \widehat{d}_N = t(U^{h_A}_N,U^{h_B}_N,U^{h_C}_N,U^{h_D}_N) := U^{h_A}_N/(U^{h_D}_N)^3 + U^{h_B}_N U^{h_C}_N/(U^{h_D}_N)^4.
\end{equation*}
We have 
\begin{equation*}
\begin{split}
    t(U^{h_A}_\infty,U^{h_B}_\infty,U^{h_C}_\infty,U^{h_D}_\infty) &= \iint \bar{w}(\xi,\eta) (\bar{w}(\xi,\eta) - 2 f(\xi)g(\eta)) d\xi d\eta \\
    &\quad + \int f(\xi)^2 d\xi \times \int g(\eta)^2 d\eta \\
    &= d(\bar{w}).
\end{split}
\end{equation*}
Then, Proposition~\ref{prop:delta_method_plugin} ensures that the studentized statistic converges to a standard normal distribution
\begin{equation*}
\begin{split}
    Z^d_N := \sqrt{\frac{N}{\widehat{V}^d_N}} \left(\widehat{d}_N - d(\bar{w})\right) \xrightarrow[N \rightarrow \infty]{\mathcal{D}} \mathcal{N}(0, 1),
\end{split}
\end{equation*}
where 
\begin{equation*}
    \widehat{V}^d_N = \nabla t(U^{h_A}_N,U^{h_B}_N,U^{h_C}_N,U^{h_D}_N)^T \widehat{\Sigma}^{h_A,h_B,h_C,h_D}_N \nabla t(U^{h_A}_N,U^{h_B}_N,U^{h_C}_N,U^{h_D}_N),
\end{equation*}
and $\widehat{\Sigma}^{h_A,h_B,h_C,h_D}_N$ is defined as in Proposition~\ref{prop:delta_method_plugin}.

\paragraph{Heterogeneity of the rows of a network.}
\label{sub:example_f2}

The degree distributions of a network hold significant information about its topology. For a binary network, denote $D_i := \sum_j Y_{ij}$ the degree of the $i$-th row, then $\E[D_i \mid \xi_i] = f(\xi_i) \E[D_i]$, where $f$ is given by~\eqref{eq:def_marginals}. Therefore, the marginals $f$ and $g$ account for the expected relative degree distributions of the binary network, which is known to characterize networks. In the BEDD model, these distributions (and the density of the network) fully characterize the model. 

Although the sum of the edge weights and the number of edges stemming from a node are equivalent for binary networks, for weighted networks, they are two different quantities. The sum of the edge weights stemming from a node is sometimes called its strength~\cite{barrat2004architecture}, which does not depend entirely on the number of edges. In a network with weighted edges, $f(\xi_i)$ corresponds to the expected relative strength of the $i$-th row-node, instead of its expected relative degree.

One way to characterize the degree/strength distributions of a network is to calculate their variance. In particular, the variance of the degree distribution of a network quantifies its heterogeneity (i.e. the unbalance between strongly interacting nodes and the others) and can be used as an index to characterize networks~\cite{snijders1981degree}. In our framework with random graph models, rather than directly study the empirical distribution of row degrees/strengths, one would like to retrieve information on $f$ and $g$, the distributions of the expected degrees/strengths specified by the model. For the BEDD model, $f$ and $g$ are directly given by the model. The variance of the row expected relative degree/strength distribution is $F_2 - 1$, where $F_2 := \int f^2(\xi) d\xi$.

$F_2$ can be estimated using the estimator $\widehat{F}_{2,N} := \kappa(U^{h_1}_N,U^{h_2}_N) = U^{h_1}_N/U^{h_2}_N$, using the $U$-statistics associated to the submatrix kernels functions $h_1$ and $h_2$ defined as 
\begin{equation*}
    h_1(Y_{(i;j_1,j_2)}) = Y_{ij_1} Y_{ij_2}
\end{equation*}
and 
\begin{equation*}
    h_2(Y_{(i_1,i_2;j_1,j_2)}) = \frac{1}{2} (Y_{i_1j_1} Y_{i_2j_2} + Y_{i_1j_2} Y_{i_2j_1}).
\end{equation*}
By Lemma~\ref{lem:f2_kernels} in Appendix, we have $U^{h_1}_\infty = \lambda^2 F_2$ and $U^{h_2}_\infty = \lambda^2$. Given that $\nabla \kappa(U^{h_1}_N, U^{h_2}_N) = (1/U^{h_2}_N, -U^{h_1}_N/(U^{h_2}_N)^2)$, the application of Proposition~\ref{prop:delta_method_plugin} yields 
\begin{equation}
    Z^{F_2}_N := \sqrt{\frac{N}{\widehat{V}^{F_2}_N}} \bigg(\widehat{F}_{2,N} - F_2\bigg) \xrightarrow[N \rightarrow \infty]{\mathcal{D}} \mathcal{N}(0, 1),
    \label{eq:convergence_vdt}
\end{equation}
where
\begin{equation*}
    \widehat{V}^{F_2}_N = \frac{1}{(U^{h_2}_N)^2} \widehat{V}^{h_1}_N - \frac{2 \widehat{F}_{2,N}}{(U^{h_2}_N)^2} \widehat{C}^{h_1,h_2}_N + \frac{(\widehat{F}_{2,N})^2}{(U^{h_2}_N)^2} \widehat{V}^{h_2}_N.
\end{equation*}

In a similar way, one can define the analogous estimator $\widehat{G}_{2,N}$ for $G_2 := \int g^2(\eta) d\eta$ and its associated variance estimator to quantify the heterogeneity of the columns of networks.

\section{Simulations}
\label{sec:simulations}

The methodology proposed in the previous section relies on Propositions~\ref{prop:asymptotic_normality_plugin} and~\ref{prop:delta_method_plugin}, which are asymptotic results. In this section, we illustrate the behavior of the statistics defined in the previous section on synthetic networks, investigating the effect of network size. For each example, we check the asymptotic normality of the studentized statistic for networks simulated under different configurations (different models, values of $N$ and $\rho$). We give the resulting Q-Q plots. We also examine the coverage probabilities of the confidence intervals built with the estimates or the standard deviations of the statistics, depending on whether they are fitted for estimation or statistical testing. Finally, we illustrate the computational efficiency of our variance estimator, compared to naive approaches.

\subsection{Motif counts}

The following simulation results illustrate the application to network motif counting developed in Section~\ref{sub:example_motif}.

\paragraph{Model I:} This graph model is an LBM with two row groups and two column groups of equal proportion. Using the notations of~\eqref{eq:lbm}, this means $\boldsymbol{\alpha} = \boldsymbol{\beta} = (0.5, 0.5)$. The probability matrix $\boldsymbol{\pi}$ has size $2 \times 2$. We set $\pi_{k\ell} = 0.5$ for all $1 \le k, \ell \le 2$ except $\pi_{11} = 0.95$.

Under Model I, for each value $N \in \{ 2^k : k \in \llbracket 5, 12 \rrbracket \}$ and $\rho \in \{ 1/8, 1/2 \}$, we simulated $K = 500$ networks of size $m_N \times n_N$ where $m_N = \lfloor \rho N \rfloor$ and $n_N = N - m_N$. The relative frequencies of motifs 6 and 10 of \cite{simmons2019motifs} are respectively given by $U^{h_6}_N$ and $U^{h_{14}}_N$ where $h_6$ and $h_{14}$ were defined by~\eqref{eq:motif_kernels}. 

The Q-Q plots of the studentized statistic associated to $h_6$ and $h_{14}$ are given in Figure~\ref{fig:motif6_qq} and~\ref{fig:motif14_qq}. For $\rho = 1/2$, we observe that the empirical distributions of both statistics converge and become close to a normal distribution as $N \gtrsim 128$. Figure~\ref{fig:motif_coverage} gives the respective coverage probabilities for $U^{h_6}_\infty$ and $U^{h_14}_\infty$ of the intervals $\left[U^{h_6}_N - \Phi(1-\frac{\alpha}{2})\sqrt{\frac{N}{\widehat{V}^{h_6}_N}}, U^{h_6}_N + \Phi(1-\frac{\alpha}{2})\sqrt{\frac{N}{\widehat{V}^{h_6}_N}}\right]$ and $\left[U^{h_{14}}_N - \Phi(1-\frac{\alpha}{2})\sqrt{\frac{N}{\widehat{V}^{h_{14}}_N}}, U^{h_{14}}_N + \Phi(1-\frac{\alpha}{2})\sqrt{\frac{N}{\widehat{V}^{h_{14}}_N}}\right]$ respectively, where $\Phi$ is the quantile function of the standard normal distribution. The coverage probabilities converge to $\alpha$ but with different speeds depending on the motif. We also observe that a larger number of nodes is needed to reach the target coverage probabilities with $\rho = 1/8$ (rectangular matrix) than $\rho = 1/2$ (square matrix).

\begin{figure}[!tb]
\includegraphics[width=0.99\linewidth]{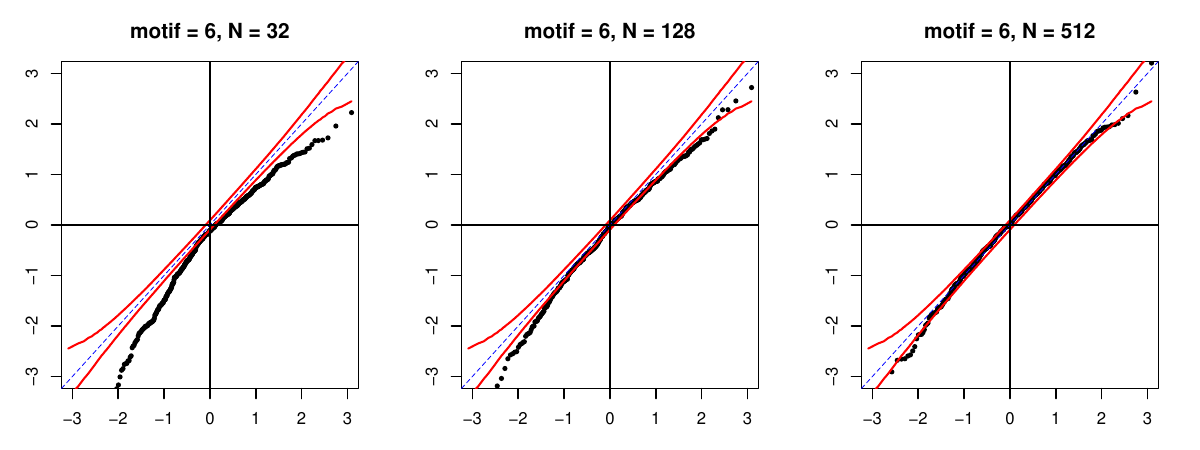}
\caption{Q-Q plots for $Z^6_N$ the studentized statistic associated with $U^{h_6}_N$, with $\rho = 0.5$.}
\label{fig:motif6_qq}
\end{figure}

\begin{figure}[!tb]
\includegraphics[width=0.99\linewidth]{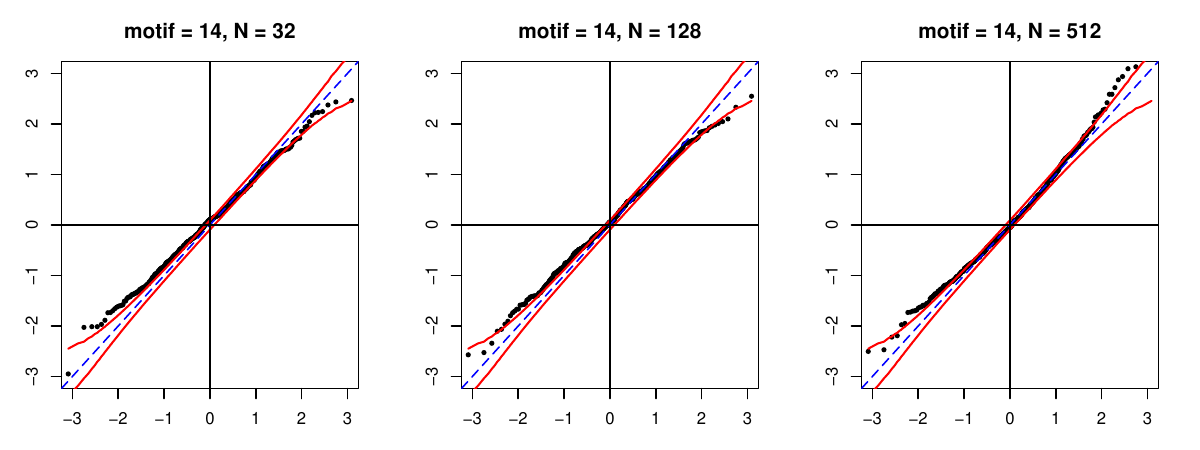}
\caption{Q-Q plots for $Z^{14}_N$ the studentized statistic associated with $U^{h_{14}}_N$, with $\rho = 0.5$.}
\label{fig:motif14_qq}
\end{figure}

\begin{figure}[!tb]
\includegraphics[width=0.99\linewidth]{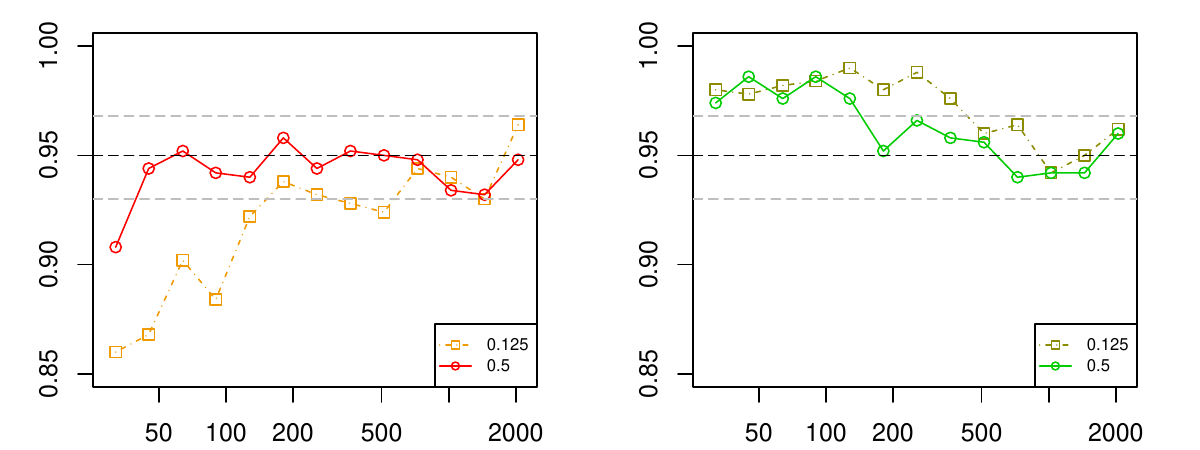}
\caption{Empirical coverage probabilities for the asymptotic confidence intervals at level $\alpha = 0.95$ of $U^{h_6}_N$ (left) and $U^{h_{14}}_N$ (right) for different values of $N$ (x-axis), $\rho \in \{1/8, 1/2 \}$. Grey dashed lines represent the confidence interval at level $0.95$ of the frequency $Z = X/K$, if $X$ follows the binomial distribution with parameters $K$ and $\alpha = 0.95$. }
\label{fig:motif_coverage}
\end{figure}

\subsection{Graphon product distance}

The following simulation results illustrate the application to graphon analysis developed in Section~\ref{sub:example_graphon}.

\paragraph{Model II($\epsilon$):} We consider a Poisson-LBM. As this is a weighted graph model, $\boldsymbol{\pi}$ is a matrix of weights rather than probabilities, i.e. $(\pi_{k \ell})_{1 \le k \le K, 1 \le \ell \le L}$ are real non-negative numbers. We consider two row groups and two column groups of equal proportion. Let $\boldsymbol{\pi}^0$ be a weight matrix. We set $\pi^0_{11} = 4$, $\pi^0_{12} = \pi^0_{21} = 2$ and $\pi^0_{22} = 1$. The LBM with parameters $\boldsymbol{\alpha}$, $\boldsymbol{\beta}$ and $\boldsymbol{\pi}$ is also a BEDD model. Indeed, its graphon has a product form and can be written $w_0(\xi, \eta) = \lambda \bar{w}_0(\xi, \eta) = \lambda f_0(\xi) g_0(\eta)$ where $\lambda = 9/4$, $f_0 = g_0$ both take values $4/3$ on $[0, 0.5]$, $2/3$ on $] 0.5, 1]$.

Now, let $\boldsymbol{\tau}$ be the $2 \times 2$ matrix where $\tau_{11} = \tau_{22} = 2$ and $\tau_{12} = \tau_{21} = 0$. For $\epsilon \ge 0$, we define Model II($\epsilon$) as a LBM with group probabilities $\boldsymbol{\alpha}$ and $\boldsymbol{\beta}$ and with weight matrix $\boldsymbol{\pi}^\epsilon = \frac{\lambda}{\lambda + \epsilon} (\boldsymbol{\pi}^0 + \epsilon \boldsymbol{\tau})$. Thus, Model II(0) is an LBM with product form (BEDD model) and for $\epsilon > 0$, Model II($\epsilon$) with graphon $w_\epsilon = \lambda \bar{w}_\epsilon$ is a perturbed version and rescaled to preserve the same density $\lambda$. As $\epsilon$ grows, the graphon of Model II($\epsilon$) strays further from a BEDD model. Indeed, one can show that $d(\bar{w}_\epsilon) = 64 \epsilon^2 (5 + 2 \epsilon)^2/(9 + 4 \epsilon)^4$, which is an increasing function for $\epsilon \ge 0$.

Under Model II($\epsilon$), for each value $N \in \{ 2^k : k \in \llbracket 5, 12 \rrbracket \}$, $\rho \in \{ 1/8, 1/2 \}$, $\epsilon \in \{0.5, 1, 1.5, 2, 2.5, 3\}$, we simulated $K = 500$ networks of size $m_N \times n_N$ where $m_N = \lfloor \rho N \rfloor$ and $n_N = N - m_N$. For each network, we computed $\widehat{d}_N$ and $\widehat{V}^d_N$ as estimates of $d(\bar{w}_\epsilon)$ and $V^d$ respectively. The Q-Q plots of the studentized statistic $Z^d_N$ are given in Figure~\ref{fig:graphon_qq}. It is apparent that $Z^d_N$ is not centered, especially for $N \lesssim 512$. This is due to the fact that $\widehat{d}_N$ is obtained via the delta method, so it is a biased estimator of $d(\bar{w}_\epsilon)$ for finite values of $N$. However, the bias converges to $0$ when $N$ grows, so we find that the statistic achieves normality for $N \gtrsim 2048$. Figure~\ref{fig:graphon_coverage} gives for different values of $\epsilon$, the coverage probability for $d(\bar{w}_\epsilon)$ of the interval $\left[\widehat{d}_N - \Phi(1-\frac{\alpha}{2})\sqrt{\frac{N}{\widehat{V}^d_N}}, \widehat{d}_N + \Phi(1-\frac{\alpha}{2})\sqrt{\frac{N}{\widehat{V}^d_N}}\right]$ where $\Phi$ is the quantile function of the standard normal distribution. We observe that convergence is fastest when $\rho = 1/2$ but also when $\epsilon$ is larger, which seems to indicate that estimation of $d(\bar{w})$ is more precise for square matrices and when $w$ is further away from a product model.

\begin{figure}[!tb]
\includegraphics[width=0.99\linewidth]{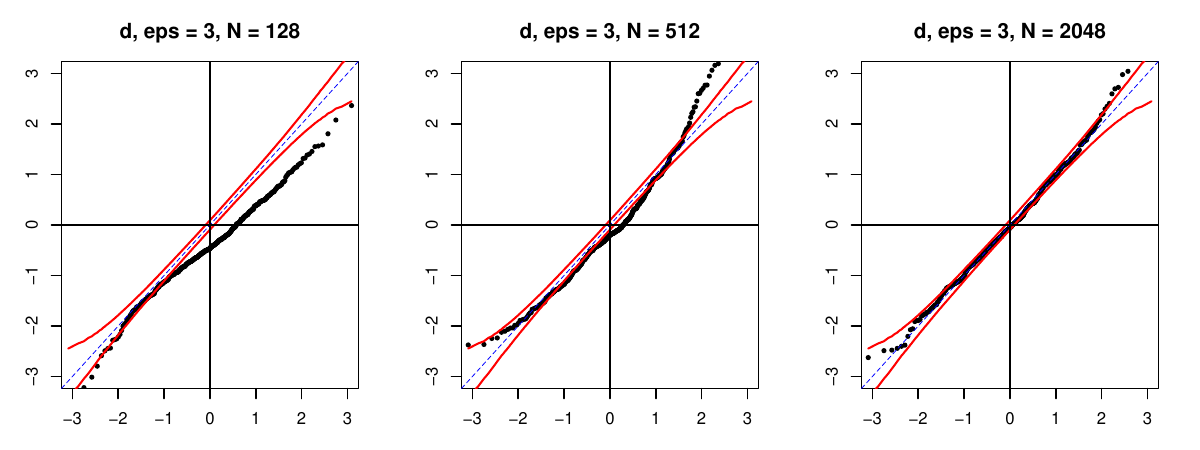}
\caption{Q-Q plot for $Z^d_N$ the studentized statistic associated with $\widehat{d}_N$, $\epsilon = 3$, $\rho = 0.5$}
\label{fig:graphon_qq}
\end{figure}

\begin{figure}[!tb]
\includegraphics[width=0.99\linewidth]{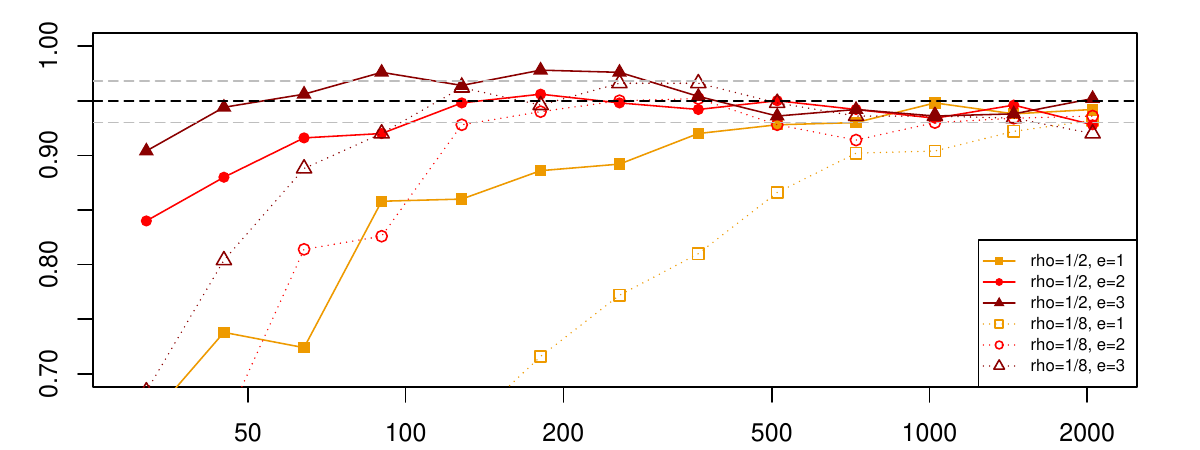}
\caption{Empirical coverage probabilities for the asymptotic confidence intervals at level $\alpha = 0.95$ of $\widehat{d}_N$ for different values of $N$ (x-axis), $\rho \in \{1/8, 1/2 \}$, $\epsilon \in \{1,2,3\}$. Grey dashed lines represent the confidence interval at level $0.95$ of the frequency $Z = X/K$, if $X$ follows the binomial distribution with parameters $K$ and $\alpha = 0.95$. }
\label{fig:graphon_coverage}
\end{figure}

\subsection{Heterogeneity in the row weights of a network}

The following simulation results illustrate the application to network row heterogeneity analysis developed in Section~\ref{sub:example_f2}.

\paragraph{Model III:} In this example, we consider a weighted BEDD model with power-law strength distributions, i.e. the marginals $f$ and $g$ have the form $f(\xi) = (\alpha_f + 1) \xi^{\alpha_f}$ and $g(\eta) = (\alpha_g + 1) \eta^{\alpha_g}$, where $\alpha_f$ and $\alpha_g$ are real non-negative numbers. $\alpha_f$ is directly related to $F_2 = \int f(\xi)^2 d\xi = (\alpha_f + 1)^2/(2 \alpha_f + 1)$. We set $\lambda = 1$, $F_2 = 3$ and $G_2 = 2$.

Under Model III, for each value $N \in \{ 2^k : k \in \llbracket 5, 12 \rrbracket \}$ and $\rho \in \{ 1/8, 1/2, 7/8 \}$, we simulated $K = 500$ networks of size $m_N \times n_N$ where $m_N = \lfloor \rho N \rfloor$ and $n_N = N - m_N$. We estimated $F_2$ and $G_2$ using $\widehat{F}_{2,N}$ and its symmetric counterpart $\widehat{G}_{2,N}$. We also computed $\widehat{V}^{F_2}_N$ and $\widehat{V}^{G_2}_N$ to obtain the studentized statistics $Z^{F_2}_N$ and $Z^{G_2}_N$. Q-Q plots of $Z^{F_2}_N$ are given in Figure~\ref{fig:f2_qq}. Figure~\ref{fig:f2_coverage} gives the respective coverage probabilities for $F_2$ and $G_2$ of the intervals $$\left[\widehat{F}_{2,N} - \Phi(1-\frac{\alpha}{2})\sqrt{\frac{N}{\widehat{V}^{F_2}_N}}, \widehat{F}_{2,N} + \Phi(1-\frac{\alpha}{2})\sqrt{\frac{N}{\widehat{V}^{F_2}_N}}\right]$$ and $$\left[\widehat{G}_{2,N} - \Phi(1-\frac{\alpha}{2})\sqrt{\frac{N}{\widehat{V}^{G_2}_N}}, \widehat{G}_{2,N} + \Phi(1-\frac{\alpha}{2})\sqrt{\frac{N}{\widehat{V}^{G_2}_N}}\right]$$ respectively, where $\Phi$ is the quantile function of the standard normal distribution. Despite the bias due to the delta method, apparent for $N=64$ in Figure~\ref{fig:f2_qq}, we find that the coverage probabilities fall in the confidence intervals even for small values of $N$. We would expect that for $F_2$, the worst case corresponds to $\rho = 1/8$ (there are fewer rows) and that for $G_2$, the worst case corresponds to $\rho = 7/8$ (there are fewer columns), but Figure~\ref{fig:f2_coverage} does not show a clear difference between the three values of $\rho$.

\begin{figure}[!tb]
\includegraphics[width=0.99\linewidth]{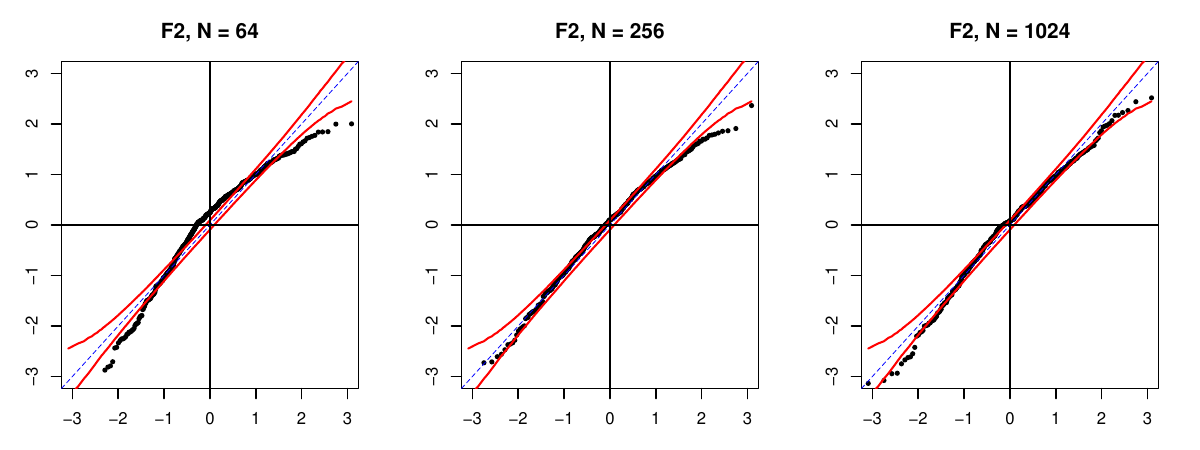}
\caption{Q-Q plot for $Z^{F_2}_N$ the studentized statistic associated with $\widehat{F}_{2,N}$, $\rho = 0.5$}
\label{fig:f2_qq}
\end{figure}

\begin{figure}[!tb]
\includegraphics[width=0.99\linewidth]{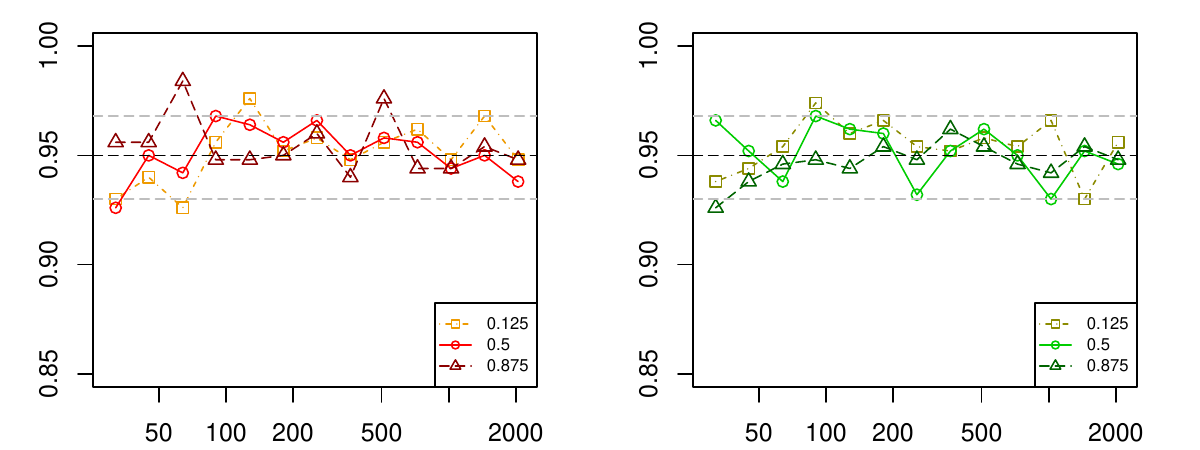}
\caption{Empirical coverage probabilities for the asymptotic confidence intervals at level $\alpha = 0.95$ of $Z^{F_2}_N$ (left) and $Z^{G_2}_N$ (right) for different values of $N$ (x-axis), $\rho \in \{1/8, 1/2, 7/8 \}$. Grey dashed lines represent the confidence interval at level $0.95$ of the frequency $Z = X/K$, if $X$ follows the binomial distribution with parameters $K$ and $\alpha = 0.95$. }
\label{fig:f2_coverage}
\end{figure}

\subsection{Computational efficiency of variance estimators}
\label{sub:simu_variance}

In the following simulation study, we assess the computational performance of variance estimation using the techniques developed in this paper. Specifically, we consider the $U$-statistics $U^{h_1}_N$ and $U^{h_2}_N$ defined in Section~\ref{sub:example_f2}. To estimate their asymptotic variances $V^{h_i}$ for $i \in \{ 1,2\}$, we compare three algorithms:
\begin{itemize}
    \item Algorithm A: We estimate $V^{h_i}$ using empirical covariance estimators for $v^{1,0}_{h_i} = \gamma_h^{1,0}$ and $v^{0,1}_{h_i} = \gamma_h^{0,1}$, as described in the beginning of Section~\ref{sec:variance_estimator}. This is the direct approach to estimating the asymptotic variance of a $U$-statistic on RCE matrices.
    \item Algorithm B: We compute $\widehat{V}^{h_i}_N$ defined in Section~\ref{sub:variance_estimator} by directly applying the kernel function to all subgraphs in the summations of equations~\eqref{eq:est_v10} and~\eqref{eq:est_v01}.
    \item Algorithm C: We compute $\widehat{V}^{h_i}_N$ defined in Section~\ref{sub:variance_estimator}, using an alternative approach that evaluates $U$-statistics on smaller networks and then combines them using the expressions of~\eqref{eq:est_v10_alternate} and~\eqref{eq:est_v01_alternate}. This method leverages matrix operations for efficient computation:
\begin{align}
    U^{h_1}_{N} &= \frac{1}{n_N m_N (m_N-1)} \left[ |Y_N^TY_N|_1 - \Trace(Y_N^TY_N)\right], \label{eq:matrix_op_h1} \\ 
    U^{h_2}_{N} &= \frac{1}{m_N(m_N-1)n_N(n_N-1)} \left[ (|Y_N|_1)^2 - |Y_N^TY_N|_1 + \Trace(Y_N^TY_N)\right. \label{eq:matrix_op_h2} \\ 
    & \quad\left.- |Y_NY_N^T|_1 + \Trace(Y_NY_N^T) - |Y^{\odot2}_N|_1 \right], \nonumber
\end{align}
\end{itemize}
where $Y_N$ is the observed network, with $m_N$ rows and $n_N$ columns.

For each value $N \in \{ 2^{k/2} : k \in \llbracket 6, 16 \rrbracket \}$, we simulated $K = 100$ networks of size $m_N \times n_N$ where $m_N = \lfloor N/2 \rfloor$ and $n_N = N - m_N$, under Model III. We then estimated $V^{h_1}$ and $V^{h_2}$ using Algorithms~A, B and C. Due to its excessive computational cost, Algorithm A has been used only for $N < 32$. Appendix~\ref{app:computation_times} presents the average computation times for each variance estimate. They indicate that, while Algorithm A is by far the slowest among the three, Algorithm C is significantly faster than Algorithm B due to the matrix operation trick, despite both computing the same estimator.

Figure~\ref{fig:mean_var_est} presents the average values of the estimates across network sizes. They show quick convergence for the estimates $\widehat{V}^{h_1}_N$ and $\widehat{V}^{h_2}_N$ provided by Algorithms B and C. In contrast, Algorithm A not only requires a higher computational cost but also produces substantially less precise estimates over the considered network size range. Furthermore, Algorithm A occasionally yields negative variance estimates, which is highly undesirable. These results highlight the advantages of the proposed approach compared to the classic method.

\begin{figure}[!tb]
\centering
\includegraphics[width=0.45\linewidth]{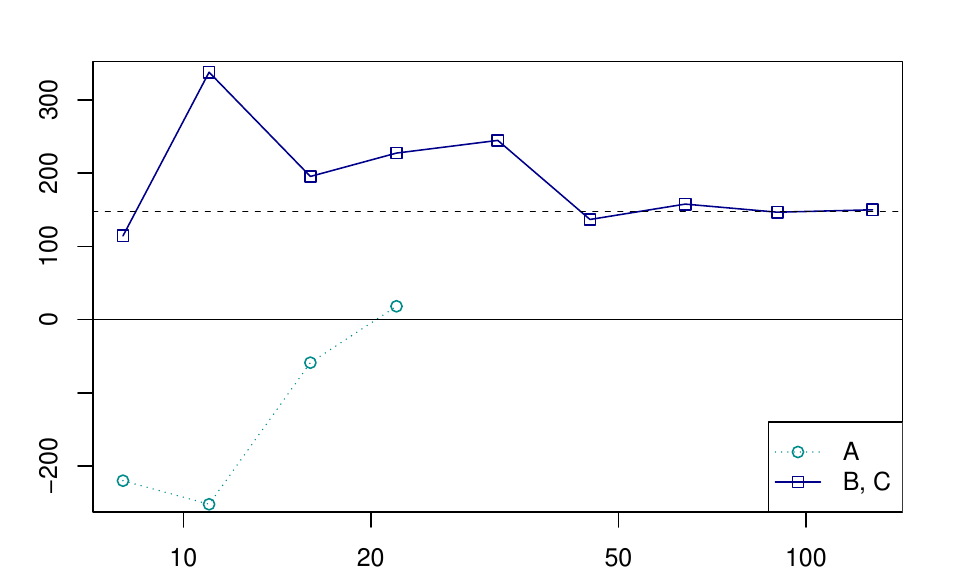}
\includegraphics[width=0.45\linewidth]{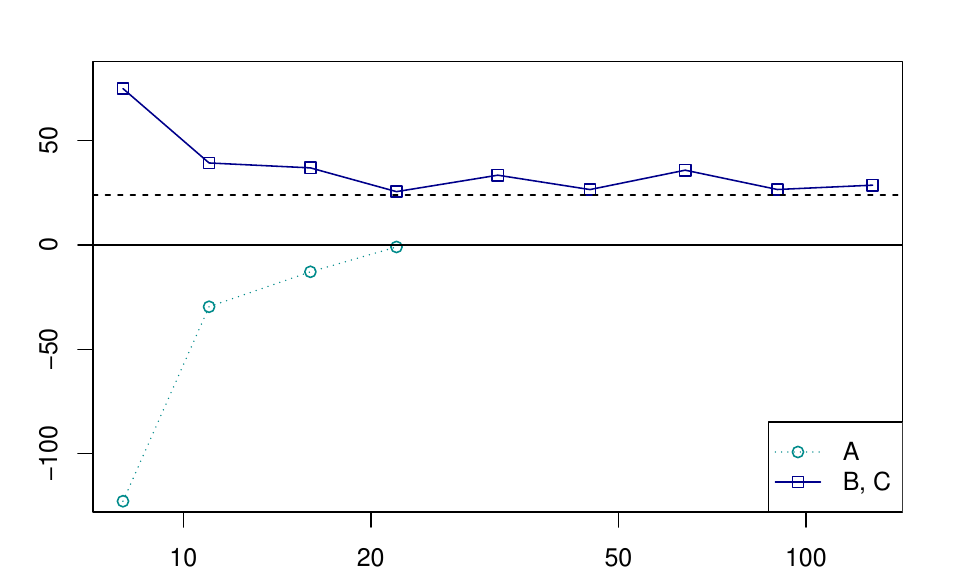}
\caption{Empirical average of the estimates for $V^{h_1}$ (left) and $V^{h_2}$ (right) given by Algorithms A, B and C for different network sizes $N$ (in log-scale). The dashed lines represent the true values of $V^{h_1}$ and $V^{h_2}$. Algorithm A was not tested for $N \ge 32$ due to its excessively high computation time.}
\label{fig:mean_var_est}
\end{figure}

\section{Illustrations}
\label{sec:illustrations}

To illustrate our methodology and the interpretation of some of the $U$-statistics introduced in this paper, we considered the set of law-makers networks compiled by \cite{MSA19}. The database contains networks arising from different fields (ecology, social sciences, life sciences). We focused on the subset of so-called ``legislature'' networks both because of their sizes and because network comparison is of interest for this dataset.

\paragraph{Data description.}
Four law-maker assemblies were considered: the European Parliament (`EP'), the General Assembly of the United Nations (`UN'), the US House of Representatives (`USH') and the US Senate (`USS'). One network has been recorded each year for each parliament; we considered the 26 years from 1979 to 2004, for which the data are available for all four assemblies. The network recorded for a given assembly in a given year consists of the votes (yes or no) of the different members (rows of the adjacency matrix) for the different proposed laws (columns of the adjacency matrix). \\
Figure \ref{fig:legDims} gives the dimensions and densities of the 26 networks collected in each assembly: the main difference is that the European Parliament is both larger (both in terms of members and laws) and sparser than the three others.

\begin{figure}[tb]
  \begin{center}
    \includegraphics[width=.32\textwidth]{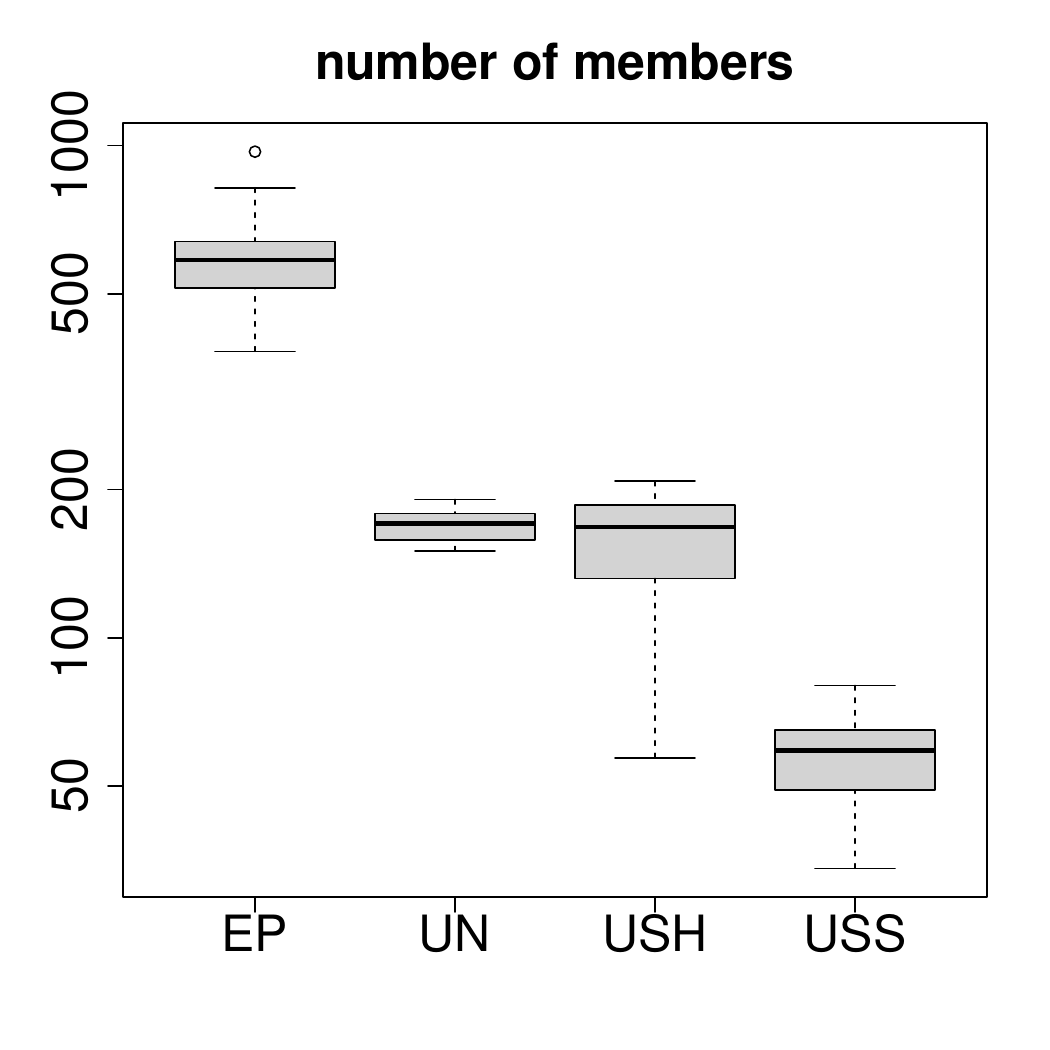}
    \includegraphics[width=.32\textwidth]{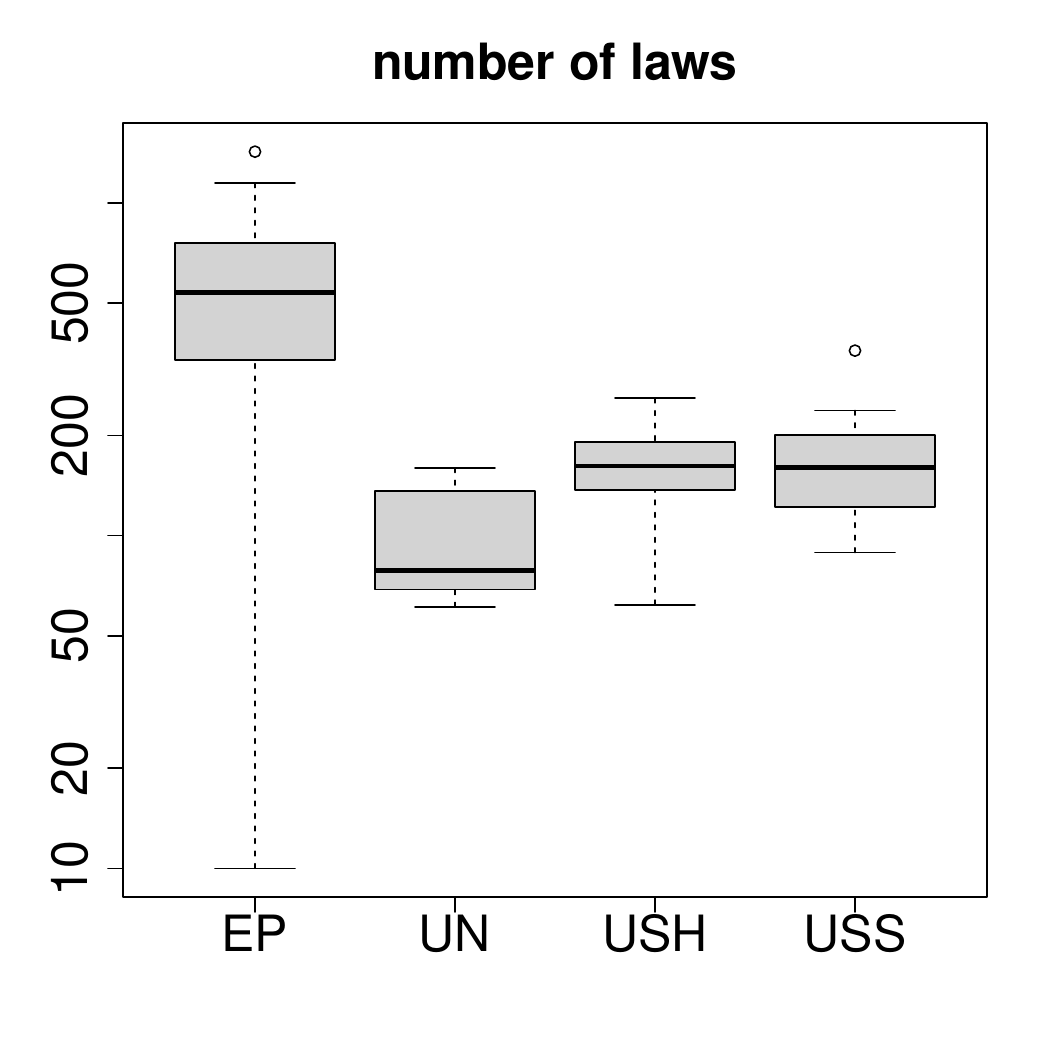}
    \includegraphics[width=.32\textwidth]{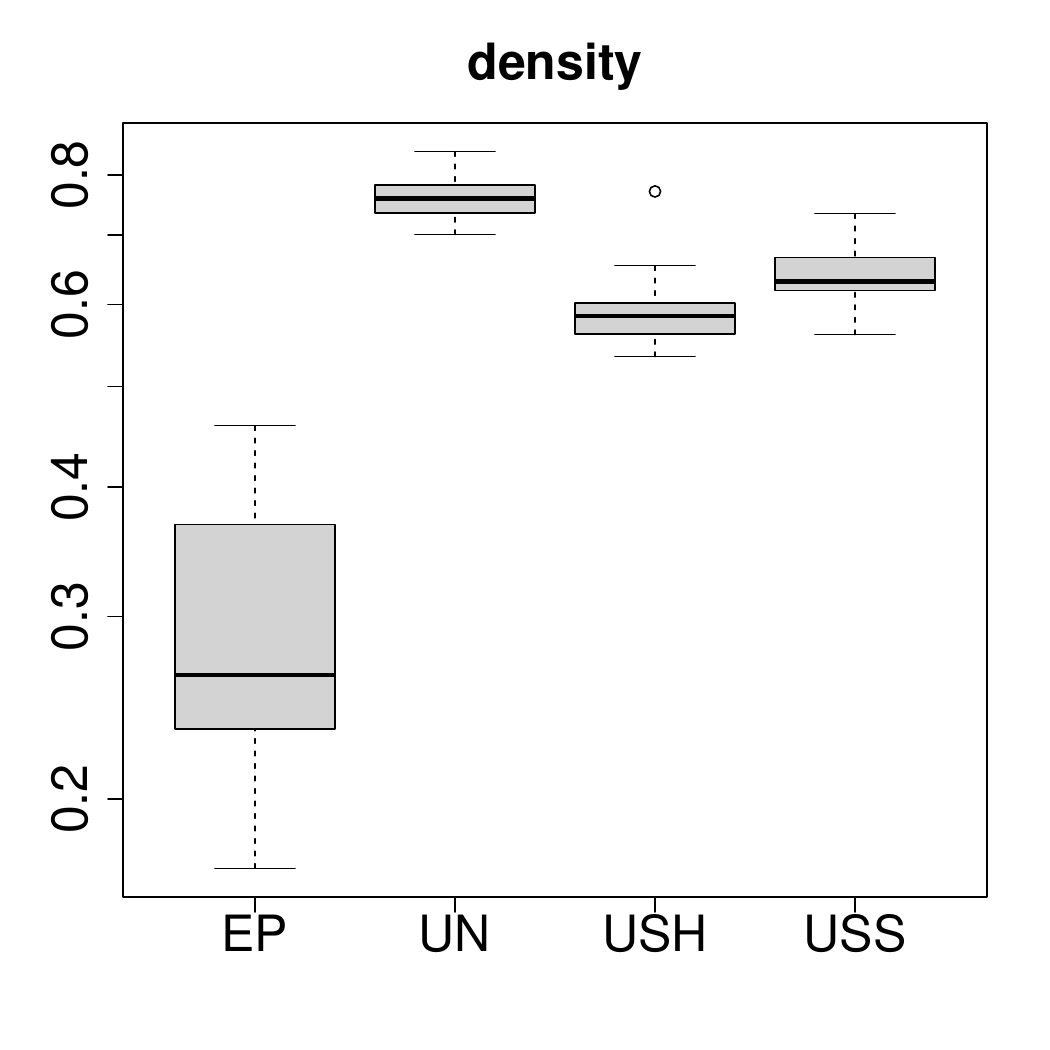}
  \end{center}
  \caption{Distribution of the number of members (left), number of laws (center), and density (right) of the four lawmakers networks across the 26 years (in log-scale). \label{fig:legDims}}
\end{figure}

\paragraph{Degree imbalance.}
We then focused on the degree of imbalance among the rows (resp. columns), which, under the weighted BEDD model defined in Equation \eqref{eq:wbedd}, can be measured by the $U$-statistic $F_2$ (resp. $G_2$). 
Figure \ref{fig:legStatIC} gives the evolution of each of the two indicators over the years for each parliament. We observe that, for each of them, both $F_2$ and $G_2$ remain above 1, all along the period: as expected no uniformity exists, neither among the members ($F_2 > 1$), nor among the laws ($G_2 > 1$). Regarding the US networks (USH and USS), the imbalance is more marked among the laws than between the members. As expected also, the confidence intervals are narrower for the largest networks (EP). No systematic pattern is observed, except the shift in the imbalance among resolutions voted at the General Assembly of the United Nations (UN) that is observed in 1992 (and which happens to coincide with the UN membership of former soviet republics).

\begin{figure}[tb]
  \begin{center}
  \begin{tabular}{cccc}
  \includegraphics[width=.22\textwidth, trim=20 20 20 20, clip=]{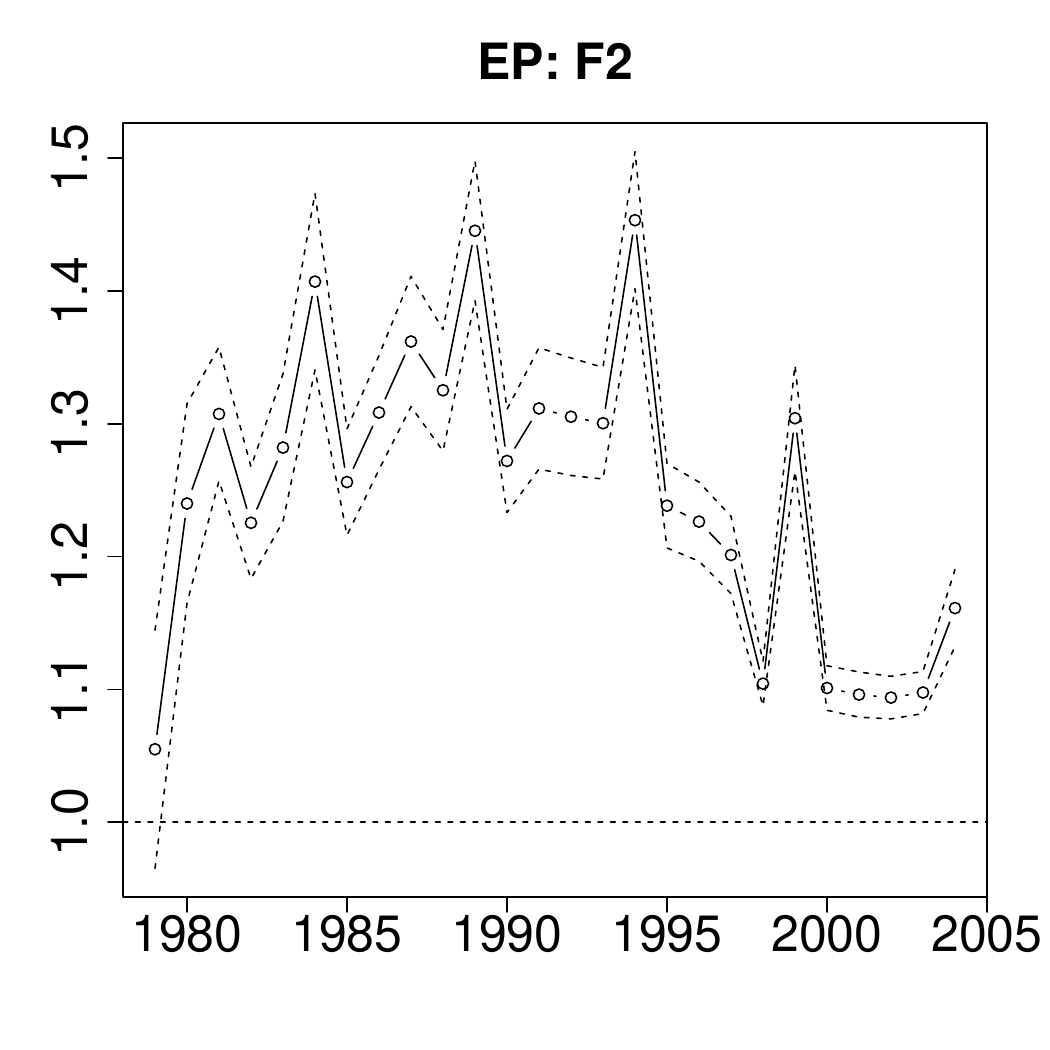} & 
  \includegraphics[width=.22\textwidth, trim=20 20 20 20, clip=]{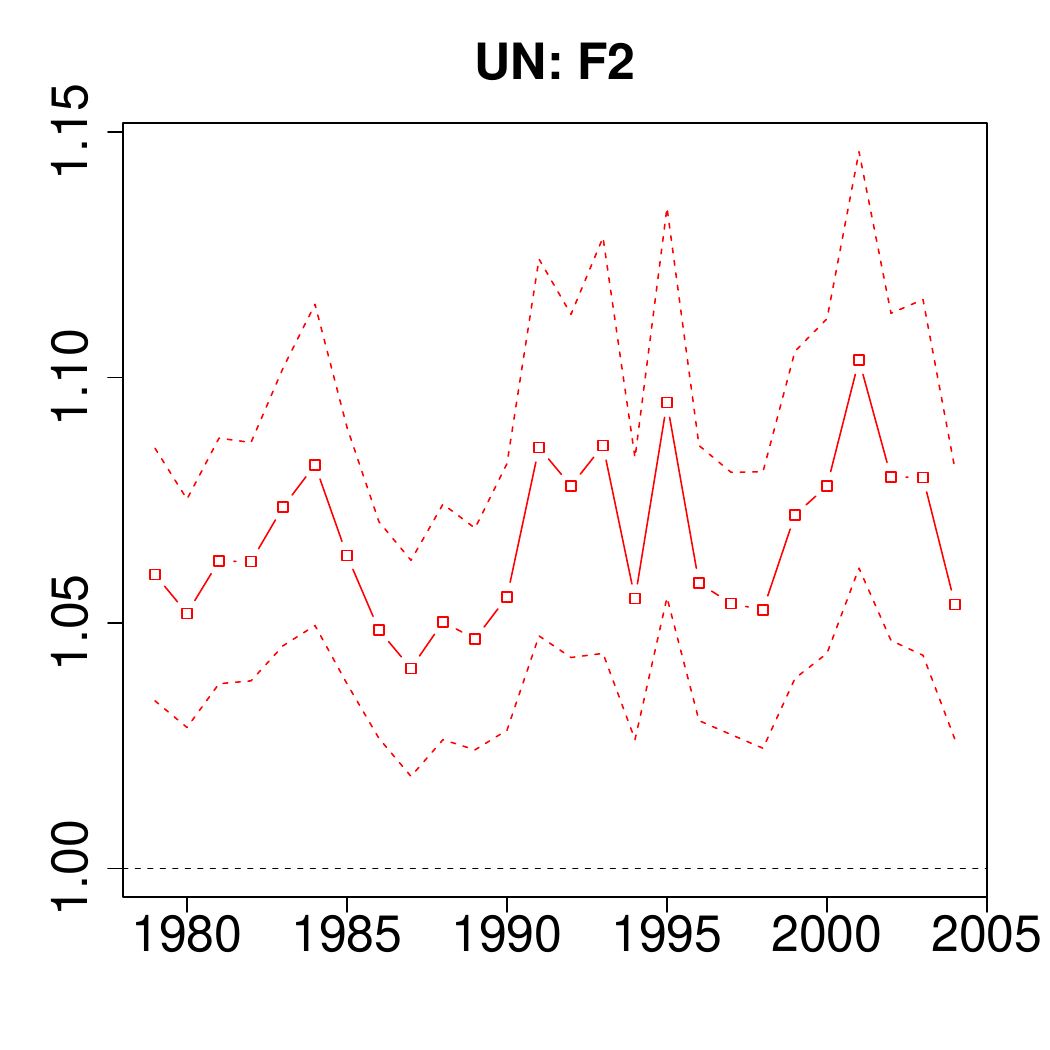} & 
  \includegraphics[width=.22\textwidth, trim=20 20 20 20, clip=]{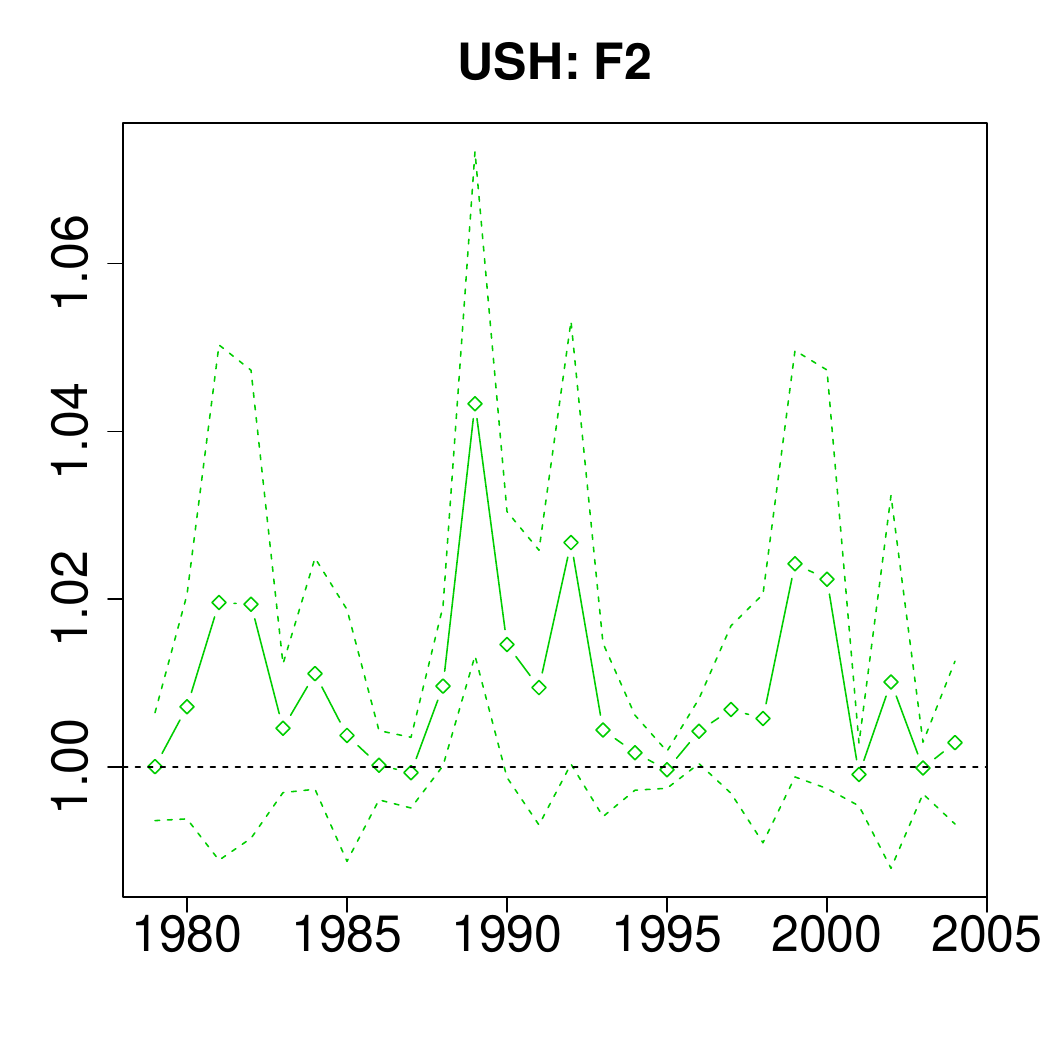} & 
  \includegraphics[width=.22\textwidth, trim=20 20 20 20, clip=]{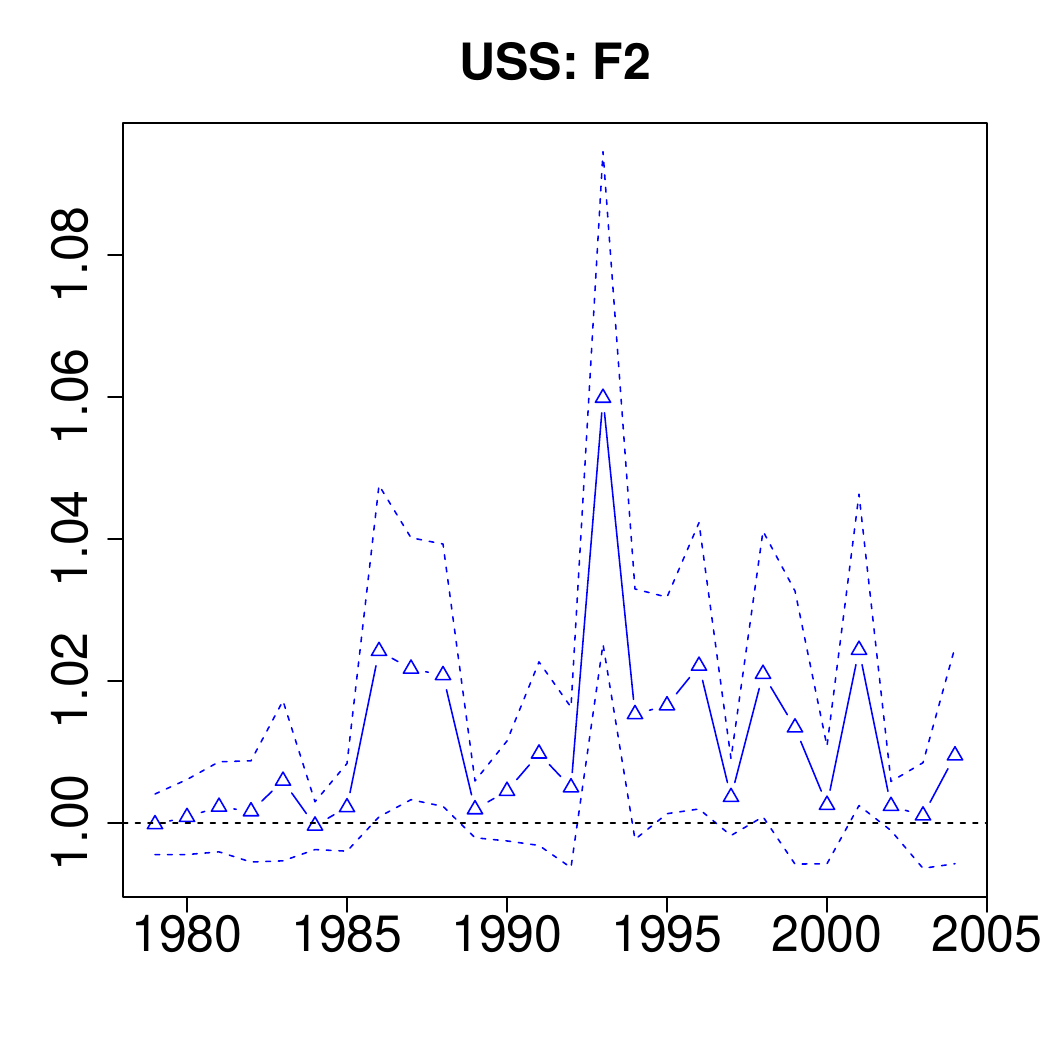} \\
  \includegraphics[width=.22\textwidth, trim=20 20 20 20, clip=]{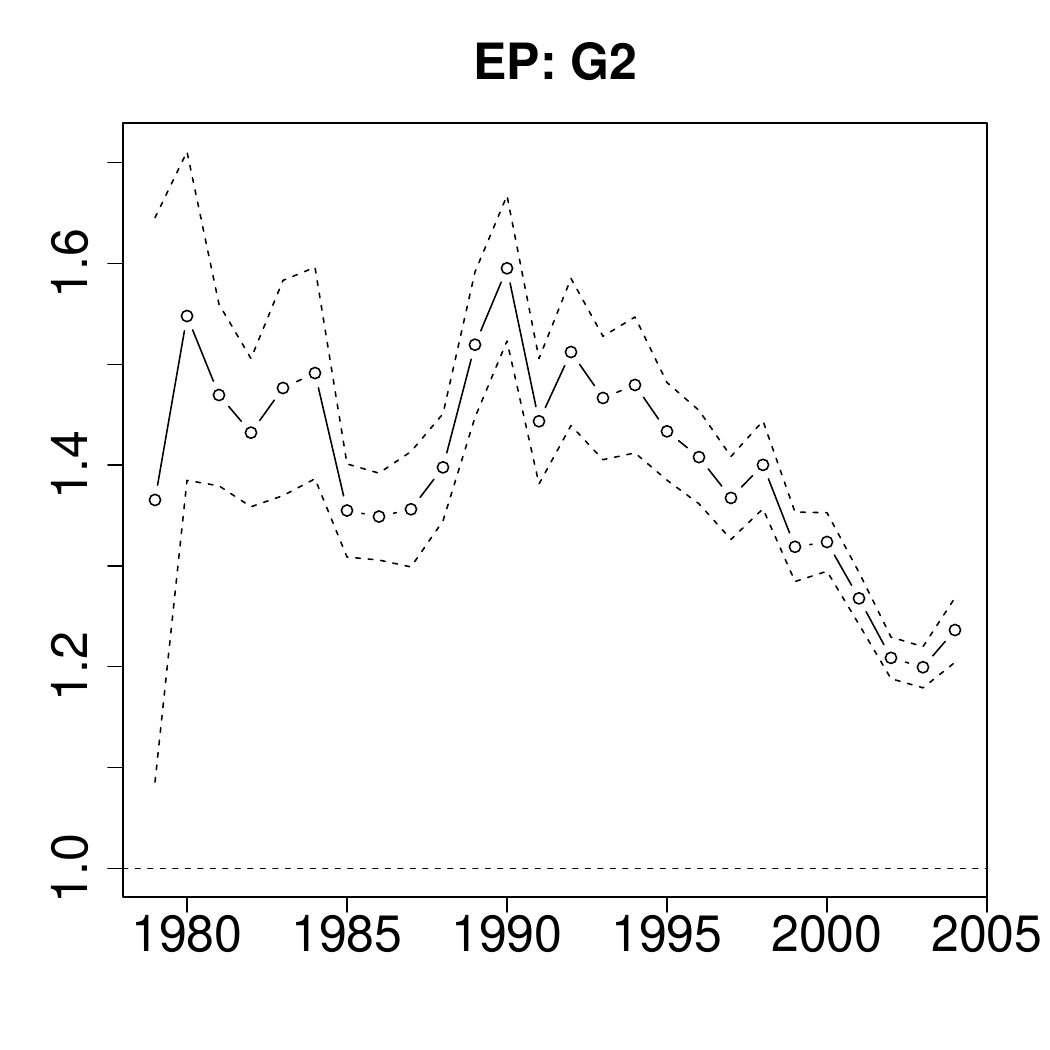} & 
  \includegraphics[width=.22\textwidth, trim=20 20 20 20, clip=]{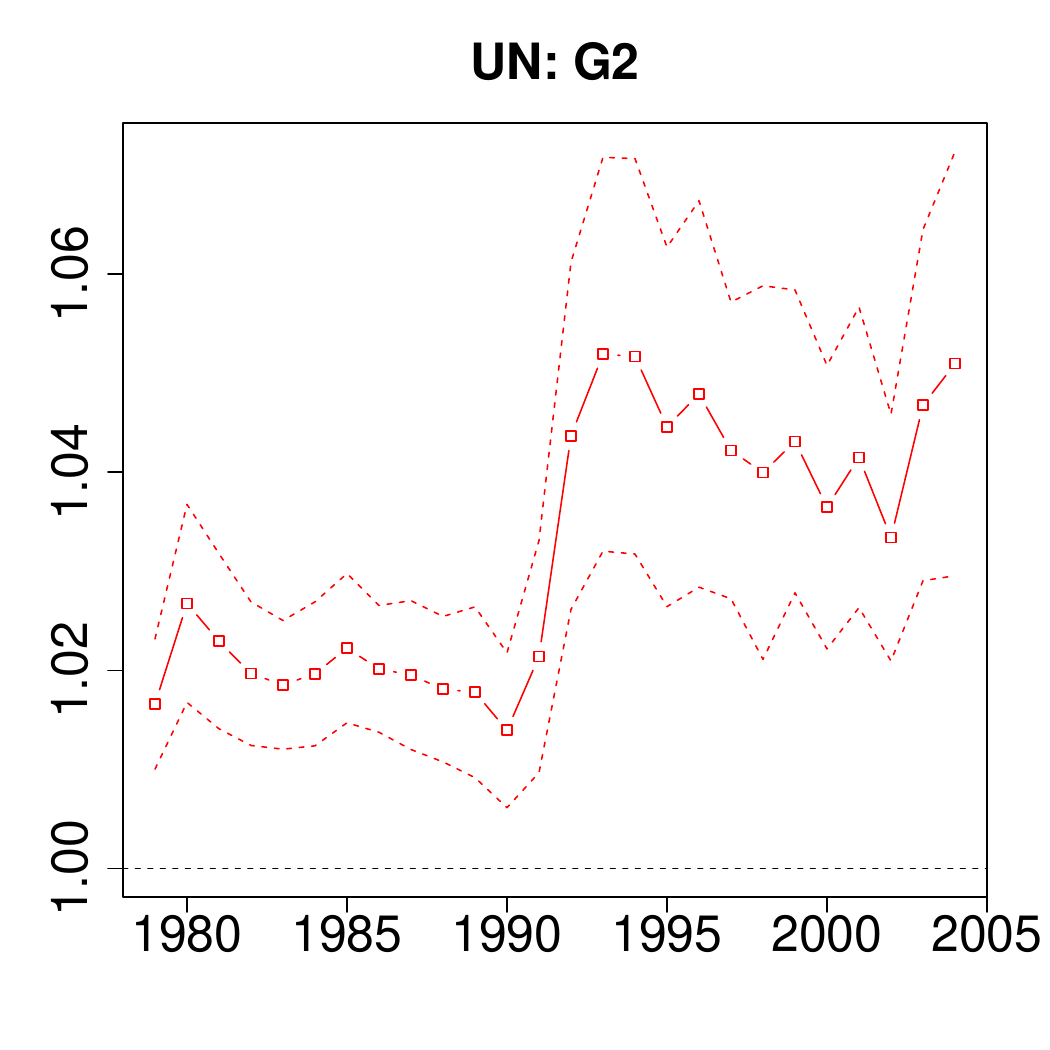} & 
  \includegraphics[width=.22\textwidth, trim=20 20 20 20, clip=]{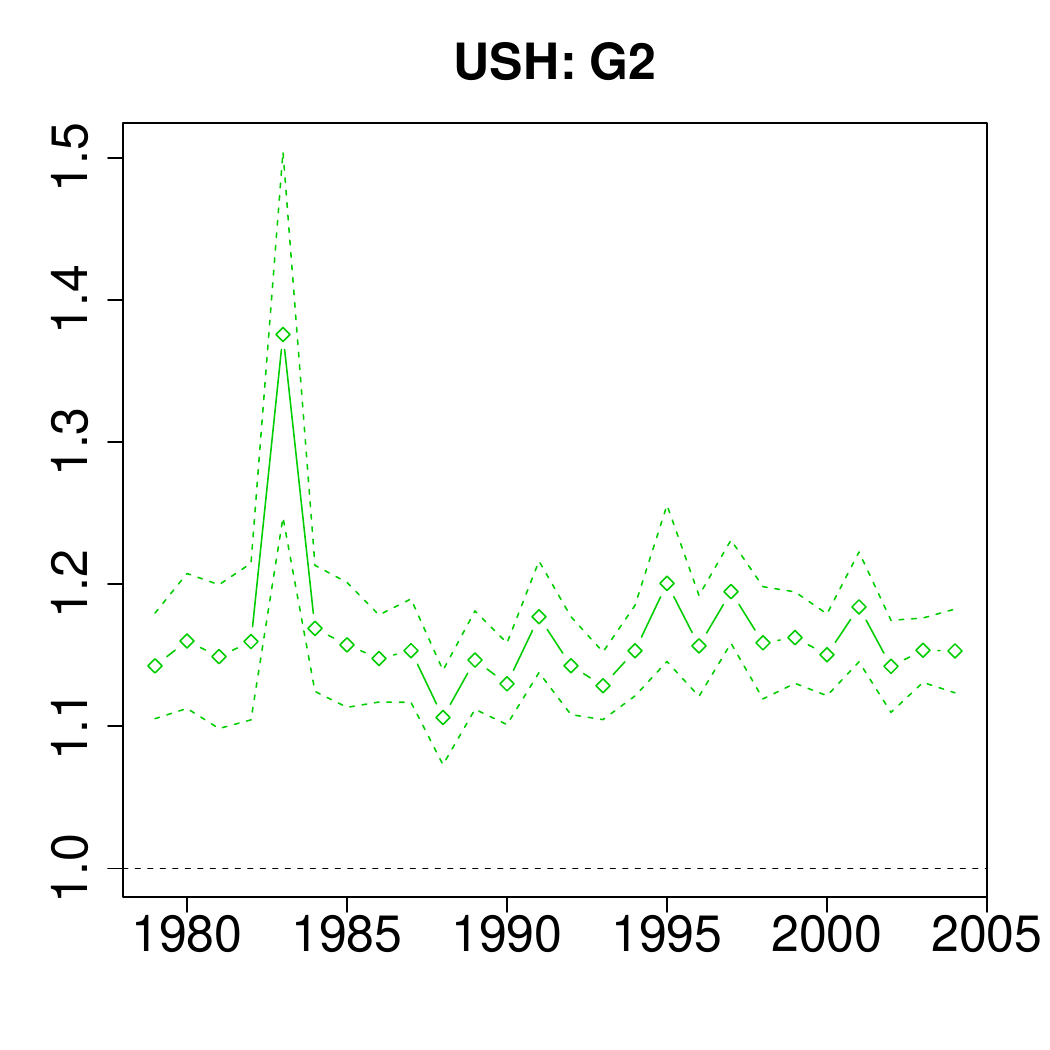} & 
  \includegraphics[width=.22\textwidth, trim=20 20 20 20, clip=]{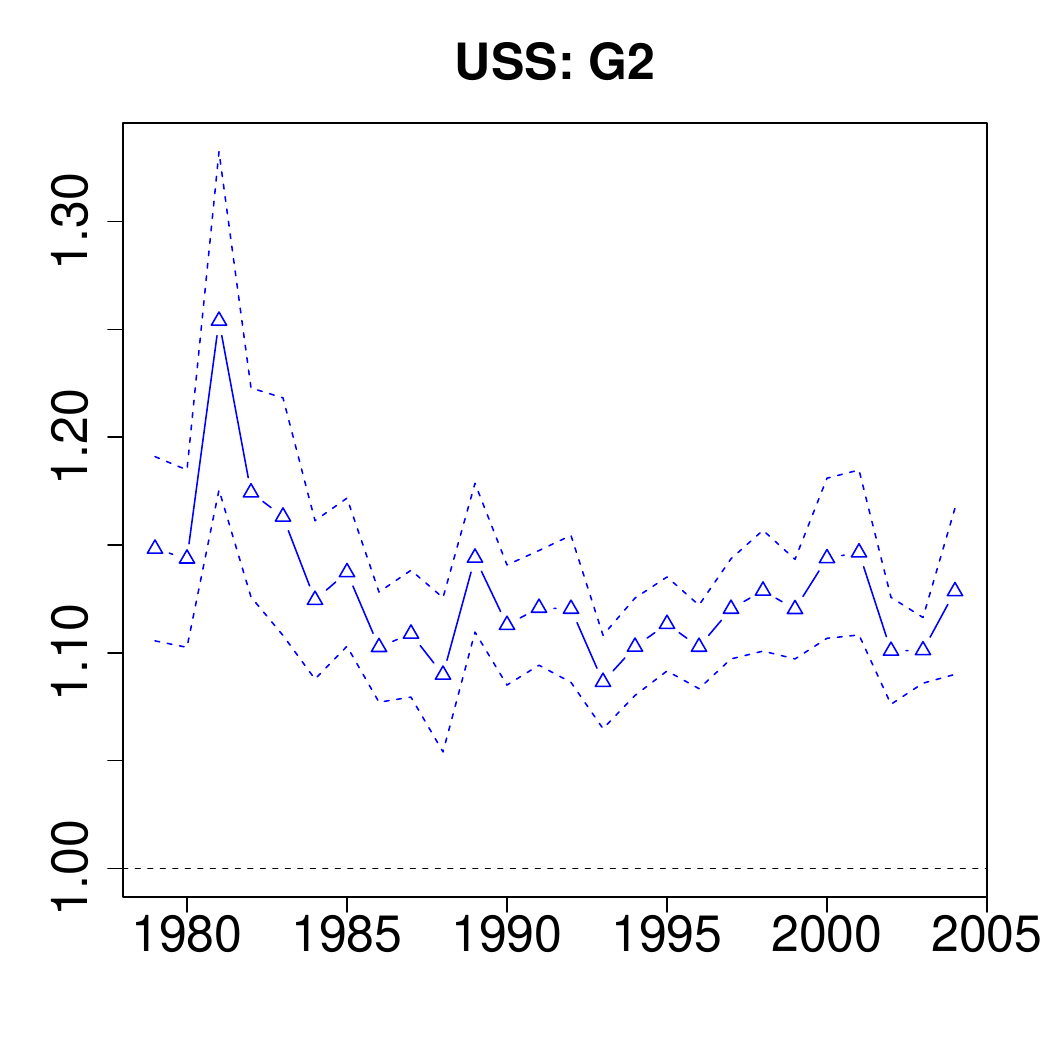} 
  \end{tabular}
  \end{center}
  \caption{Evolution of degree imbalance in each assembly over the years. Top: $F_2$ $U$-statistics, bottom: $G_2$. From left to right: European Parliament, General Assembly of the United Nations, US House of Representatives, and US Senate. Solid line: $U$-statistic as an estimate of $F_2$ (resp. $G_2$). Dotted line: 95\%-confidence interval for $F_2$ (resp. $G_2$).
  \label{fig:legStatIC}}
\end{figure}

\paragraph{Network comparison.}
For each available year, we then compared the networks of the four assemblies in terms of degree imbalance ($F_2$ and $G_2$) and frequency of topological motifs 6 (as given in Figure \ref{fig:motif_6_10}). We chose this motif as it constitutes a clique, characterizing a group behavior, in which close members are in favor of the same laws. For each of these parameters, we use the comparison test procedure described in Section \ref{sec:netComp}. \\
Figure \ref{fig:legCompStat} displays the results. We observe no significant difference between the two US assemblies, which are also the smallest ones: the absence of significant differences can therefore result from a weak power of the tests when considering small networks. We also observe a higher heterogeneity among the members of the European Parliament with respect to all other assemblies, as well as a higher heterogeneity among the members of the United Nations Assembly, with respect to the two US chambers. A different picture is obtained for the heterogeneity among the laws, which is significantly higher in the UN assembly and significantly lower in the EP assembly. \\
The frequency of motif 6 is interesting, as it may reveal a specific socio-political behavior. In this respect, the group structure turns out to be much stronger in the UN than in the EP, the members of which represent both different political orientations and different nations.

\begin{figure}[tb]
  \begin{center}
  \includegraphics[width=.32\textwidth]{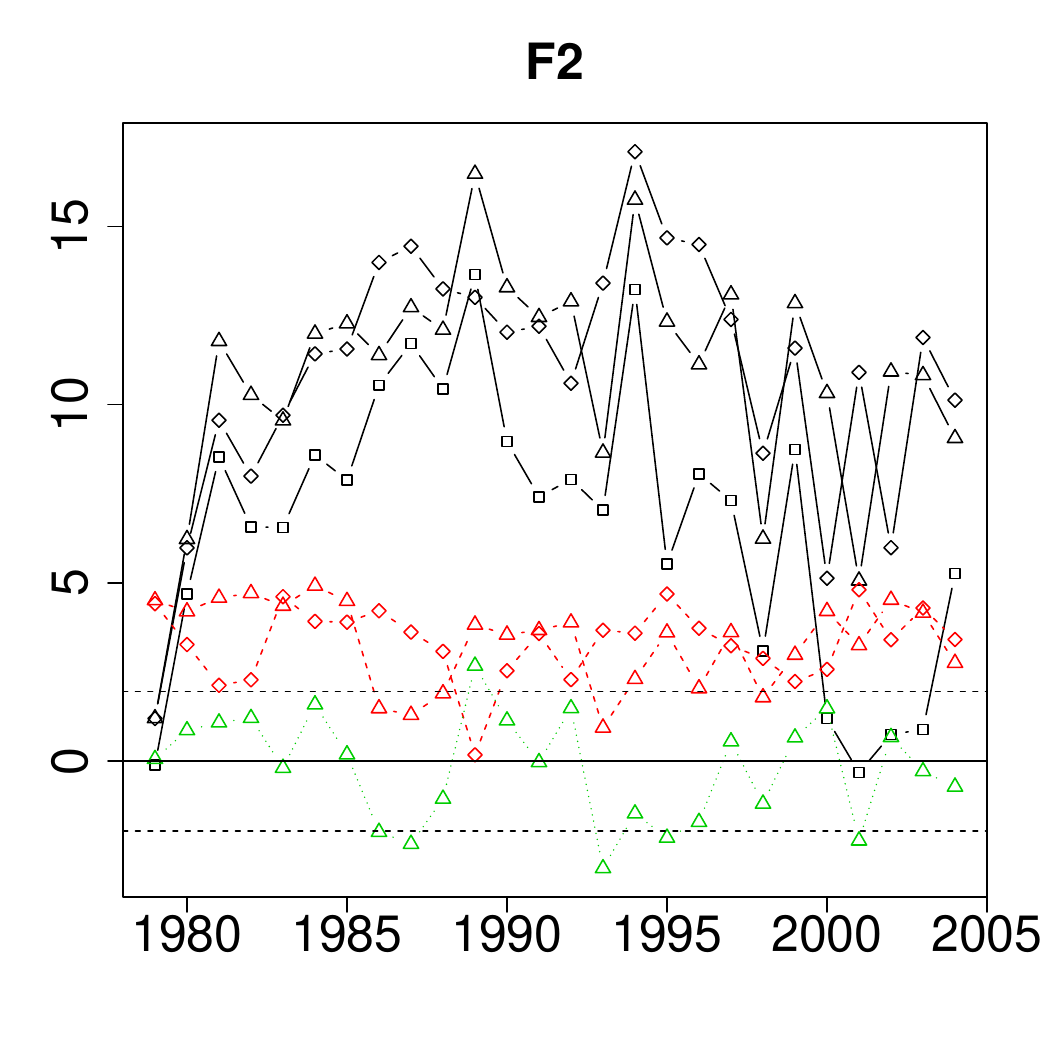} 
  \includegraphics[width=.32\textwidth]{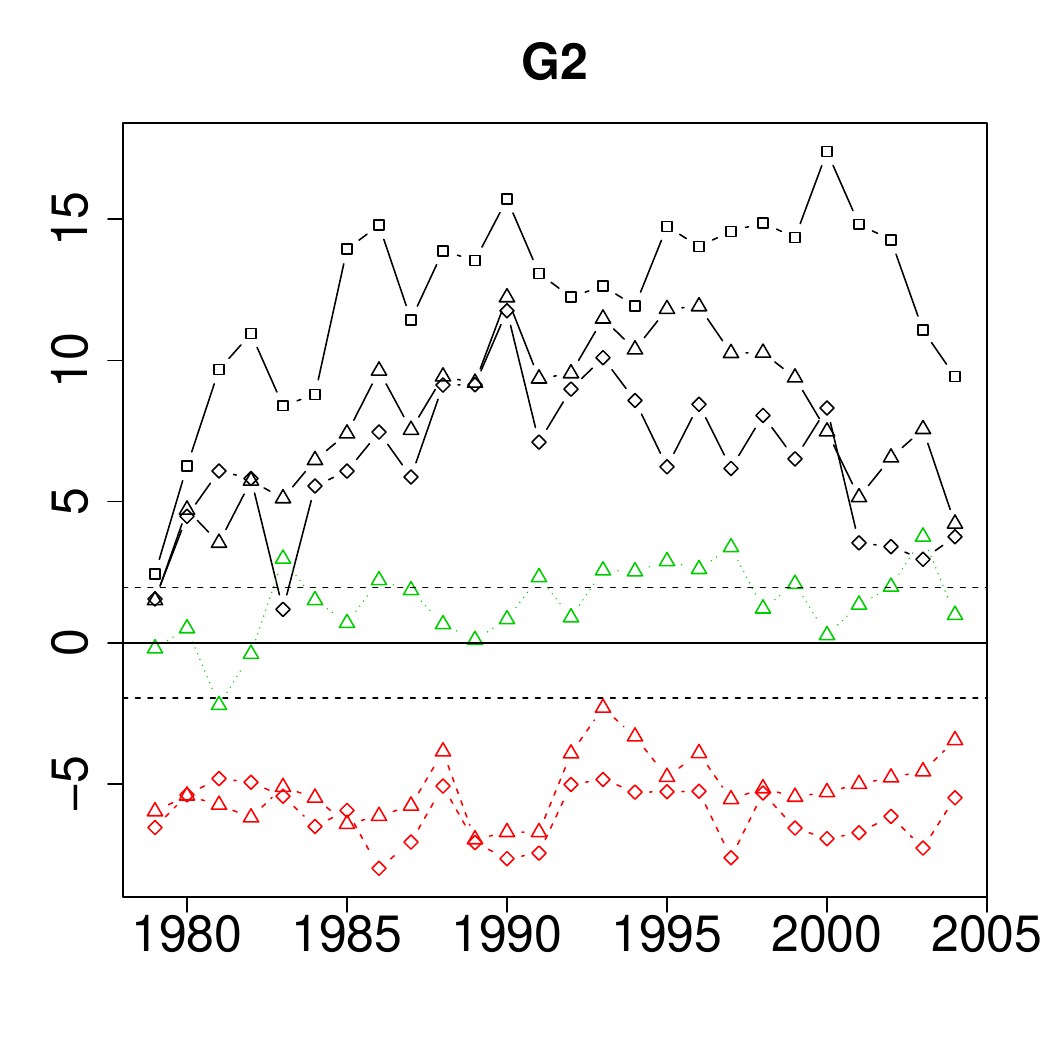}
  \includegraphics[width=.32\textwidth]{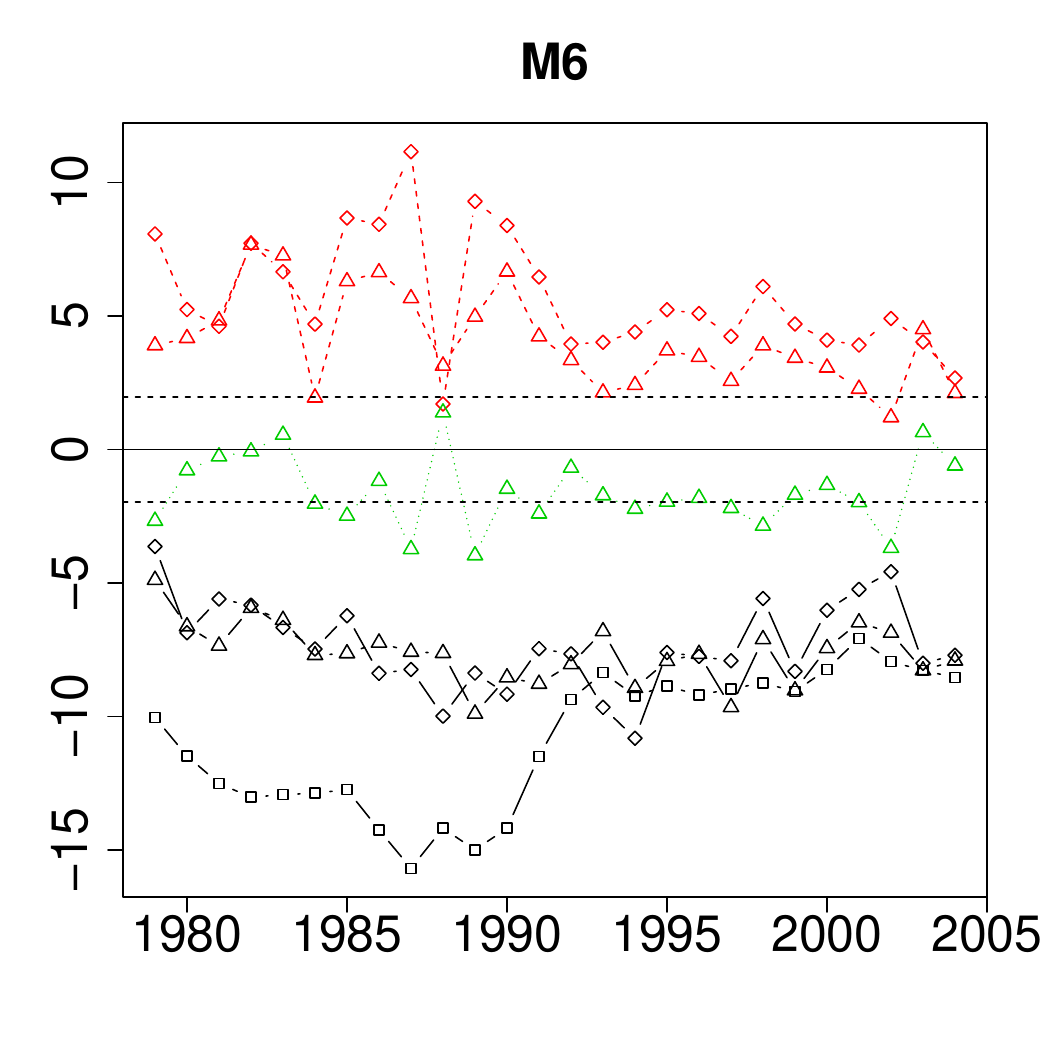}
  \end{center}
  \caption{Network comparison for the $U$-statistics, $F_2$, $G_2$ and for the count the motif 6 (M6). 
  EP-UN= ---$\Box$---, 
  EP-USS= ---$\Diamond$---, 
  EP-USH= ---$\triangle$---, 
  UN-USS= \textcolor{red}{- -$\Diamond$- -}, 
  UN-USH= \textcolor{red}{- -$\triangle$- -}, 
  USS-USH= \textcolor{green}{$\cdots\triangle\cdots$}.
  Horizontal lines = standard normal quantiles with levels .025 and .975.
  \label{fig:legCompStat}}
\end{figure}

\section{Illustrations}
\label{sec:illustrations}

To illustrate our methodology and the interpretation of some of the $U$-statistics introduced in this paper, we considered the set of law-makers networks compiled by \cite{MSA19}. The database contains networks arising from different fields (ecology, social sciences, life sciences). We focused on the subset of so-called ``legislature'' networks both because of their sizes and because network comparison is of interest for this dataset.

\paragraph{Data description.}
Four law-maker assemblies were considered: the European Parliament (`EP'), the General Assembly of the United Nations (`UN'), the US House of Representatives (`USH') and the US Senate (`USS'). One network has been recorded each year for each parliament; we considered the 26 years from 1979 to 2004, for which the data are available for all four assemblies. The network recorded for a given assembly in a given year consists of the votes (yes or no) of the different members (rows of the adjacency matrix) for the different proposed laws (columns of the adjacency matrix). \\
Figure \ref{fig:legDims} gives the dimensions and densities of the 26 networks collected in each assembly: the main difference is that the European Parliament is both larger (both in terms of members and laws) and sparser than the three others.

\begin{figure}[tb]
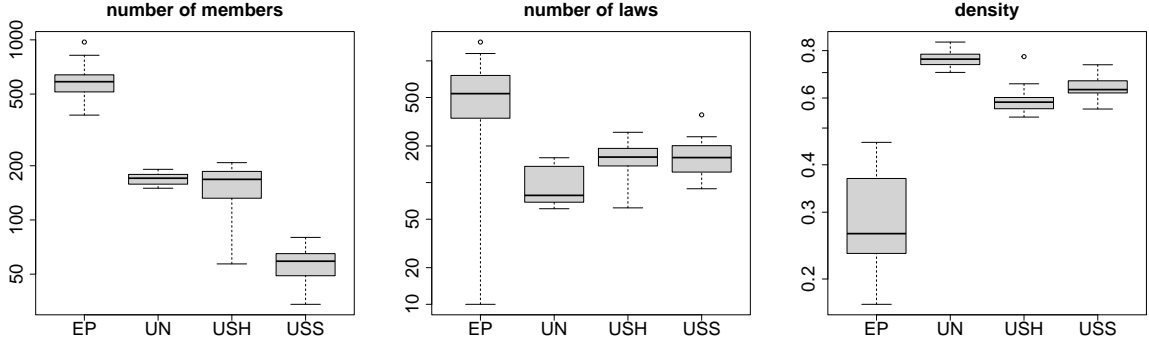

  \begin{center}
    \includegraphics[width=.32\textwidth]{legislature-members}
    \includegraphics[width=.32\textwidth]{legislature-laws}
    \includegraphics[width=.32\textwidth]{legislature-density}
  \end{center}
  \caption{Distribution of the number of members (left), number of laws (center), and density (right) of the four lawmakers networks across the 26 years (in log-scale). \label{fig:legDims}}
\end{figure}

\paragraph{Degree imbalance.}
We then focused on the degree of imbalance among the rows (resp. columns), which, under the weighted BEDD model defined in Equation \eqref{eq:wbedd}, can be measured by the $U$-statistic $F_2$ (resp. $G_2$). 
Figure \ref{fig:legStatIC} gives the evolution of each of the two indicators over the years for each parliament. We observe that, for each of them, both $F_2$ and $G_2$ remain above 1, all along the period: as expected no uniformity exists, neither among the members ($F_2 > 1$), nor among the laws ($G_2 > 1$). Regarding the US networks (USH and USS), the imbalance is more marked among the laws than between the members. As expected also, the confidence intervals are narrower for the largest networks (EP). No systematic pattern is observed, except the shift in the imbalance among resolutions voted at the General Assembly of the United Nations (UN) that is observed in 1992 (and which happens to coincide with the UN membership of former soviet republics).

\begin{figure}[tb]
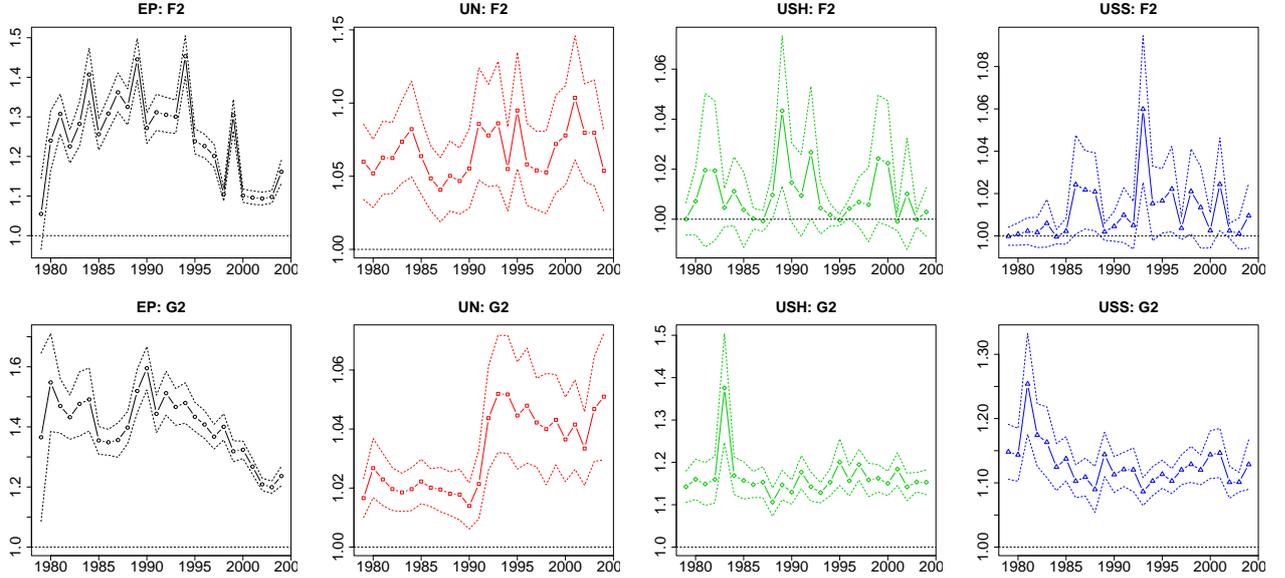

  \begin{center}
  \begin{tabular}{cccc}
  \includegraphics[width=.22\textwidth, trim=20 20 20 20, clip=]{legislature-EP-F2-statIC} & 
  \includegraphics[width=.22\textwidth, trim=20 20 20 20, clip=]{legislature-UN-F2-statIC} & 
  \includegraphics[width=.22\textwidth, trim=20 20 20 20, clip=]{legislature-USH-F2-statIC} & 
  \includegraphics[width=.22\textwidth, trim=20 20 20 20, clip=]{legislature-USS-F2-statIC} \\
  \includegraphics[width=.22\textwidth, trim=20 20 20 20, clip=]{legislature-EP-G2-statIC} & 
  \includegraphics[width=.22\textwidth, trim=20 20 20 20, clip=]{legislature-UN-G2-statIC} & 
  \includegraphics[width=.22\textwidth, trim=20 20 20 20, clip=]{legislature-USH-G2-statIC} & 
  \includegraphics[width=.22\textwidth, trim=20 20 20 20, clip=]{legislature-USS-G2-statIC} 
  \end{tabular}
  \end{center}
  \caption{Evolution of degree imbalance in each assembly over the years. Top: $F_2$ $U$-statistics, bottom: $G_2$. From left to right: European Parliament, General Assembly of the United Nations, US House of Representatives, and US Senate. Solid line: $U$-statistic as an estimate of $F_2$ (resp. $G_2$). Dotted line: 95\%-confidence interval for $F_2$ (resp. $G_2$).
  \label{fig:legStatIC}}
\end{figure}

\paragraph{Network comparison.}
For each available year, we then compared the networks of the four assemblies in terms of degree imbalance ($F_2$ and $G_2$) and frequency of topological motifs 6 (as given in Figure \ref{fig:motif_6_10}). We chose this motif as it constitutes a clique, characterizing a group behavior, in which close members are in favor of the same laws. For each of these parameters, we use the comparison test procedure described in Section \ref{sec:netComp}. \\
Figure \ref{fig:legCompStat} displays the results. We observe no significant difference between the two US assemblies, which are also the smallest ones: the absence of significant differences can therefore result from a weak power of the tests when considering small networks. We also observe a higher heterogeneity among the members of the European Parliament with respect to all other assemblies, as well as a higher heterogeneity among the members of the United Nations Assembly, with respect to the two US chambers. A different picture is obtained for the heterogeneity among the laws, which is significantly higher in the UN assembly and significantly lower in the EP assembly. \\
The frequency of motif 6 is interesting, as it may reveal a specific socio-political behavior. In this respect, the group structure turns out to be much stronger in the UN than in the EP, the members of which represent both different political orientations and different nations.

\begin{figure}[tb]
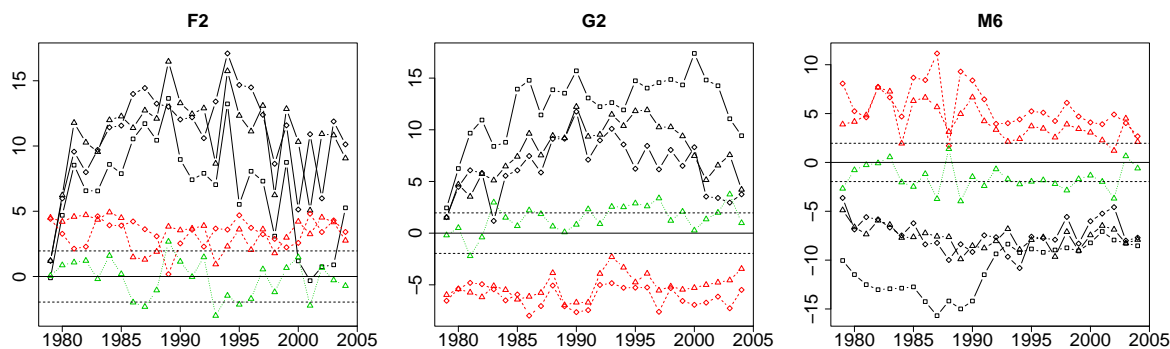

  \begin{center}
  \includegraphics[width=.32\textwidth]{legislature-F2-statComp} 
  \includegraphics[width=.32\textwidth]{legislature-G2-statComp}
  \includegraphics[width=.32\textwidth]{legislature-M6-statComp}
  \end{center}
  \caption{Network comparison for the $U$-statistics, $F_2$, $G_2$ and for the count the motif 6 (M6). 
  EP-UN= ---$\Box$---, 
  EP-USS= ---$\Diamond$---, 
  EP-USH= ---$\triangle$---, 
  UN-USS= \textcolor{red}{- -$\Diamond$- -}, 
  UN-USH= \textcolor{red}{- -$\triangle$- -}, 
  USS-USH= \textcolor{green}{$\cdots\triangle\cdots$}.
  Horizontal lines = standard normal quantiles with levels .025 and .975.
  \label{fig:legCompStat}}
\end{figure}

\section*{Acknowledgements}

This work was funded by the grant ANR-18-CE02-0010-01 of the French National Research Agency ANR (project EcoNet) and a grant from R\'egion \^Ile-de-France.

\newpage

\bibliographystyle{plain}
\bibliography{biblio}

\newpage

\begin{appendix}

\section{Backward martingales}
\label{app:martingales}

Here, we present the backward martingales and their convergence theorem, which is used to prove the convergence of some estimators. The proof of Theorem~\ref{th:martingale_convergence} can be found in~\cite{doob1953stochastic}, Section 7, Theorem 4.2. We recall beforehand the definition of a decreasing filtration.

\begin{definition}
    A decreasing filtration is a decreasing sequence of $\sigma$-fields $\mathcal{F} = (\mathcal{F}_n)_{n \ge 1}$, i.e. such that for all $n \ge 1$, $\mathcal{F}_{n+1} \subset \mathcal{F}_n$.
\end{definition}

\begin{definition}
    Let $\mathcal{F} = (\mathcal{F}_n)_{n \ge 1}$ be a decreasing filtration and $M = (M_n)_{n \ge 1}$ a sequence of integrable random variables adapted to $\mathcal{F}$. $(M_n, \mathcal{F}_n)_{n \ge 1}$ is a backward martingale if and only if for all $n \ge 1$, $\mathbb{E}[M_n \mid \mathcal{F}_{n+1}] = M_{n+1}$.
\end{definition}

\begin{theorem}
    Let $(M_n, \mathcal{F}_n)_{n \ge 1}$ be a backward martingale. Then, $(M_n)_{n \ge 1}$ is uniformly integrable, and,  denoting $M_\infty = \mathbb{E}[M_1 \mid \mathcal{F}_\infty]$ where $\mathcal{F}_\infty = \bigcap_{n = 1}^{\infty} \mathcal{F}_n$, we have
    \begin{equation*}
        M_n \xrightarrow[n \rightarrow \infty]{a.s., L_1} M_\infty.
    \end{equation*}
    \label{th:martingale_convergence}
\end{theorem}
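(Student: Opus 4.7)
The plan is to follow the classical three-step route: first represent $M_n$ as a conditional expectation of a single integrable variable, then establish uniform integrability and almost sure convergence separately, and finally combine them to obtain the $L_1$ limit and identify it.

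\textbf{Step 1: A useful representation.} First I would show by induction on $n$ that
\begin{equation*}
    M_n = \mathbb{E}[M_1 \mid \mathcal{F}_n] \qquad \text{a.s.}
\end{equation*}
The case $n=1$ is trivial. Assuming it holds for $n$, the backward martingale property gives $M_{n+1} = \mathbb{E}[M_n \mid \mathcal{F}_{n+1}]$, and since $\mathcal{F}_{n+1} \subset \mathcal{F}_n$, the tower property applied to $\mathbb{E}[\mathbb{E}[M_1 \mid \mathcal{F}_n] \mid \mathcal{F}_{n+1}] = \mathbb{E}[M_1 \mid \mathcal{F}_{n+1}]$ closes the induction. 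From this representation, uniform integrability of $(M_n)_{n \ge 1}$ is immediate, since the family of conditional expectations $\{\mathbb{E}[X \mid \mathcal{G}] : \mathcal{G} \text{ sub-$\sigma$-field}\}$ of any fixed integrable $X$ is uniformly integrable (a standard consequence of absolute continuity of the integral together with the contraction $\|\mathbb{E}[X\mid\mathcal{G}]\|_1 \le \|X\|_1$).

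\textbf{Step 2: Almost sure convergence via upcrossings.} For fixed $n$, consider the finite reversed sequence $\widetilde M_k := M_{n-k+1}$, $1 \le k \le n$, adapted to the increasing filtration $\widetilde{\mathcal F}_k := \mathcal F_{n-k+1}$. The backward martingale identity reads precisely $\mathbb{E}[\widetilde M_{k+1} \mid \widetilde{\mathcal F}_k] = \widetilde M_k$, so $(\widetilde M_k,\widetilde{\mathcal F}_k)$ is a (forward) martingale. Applying Doob's upcrossing inequality to $\widetilde M$ on the interval $[a,b]$ gives a bound on the number $U_n[a,b]$ of upcrossings of $[a,b]$ by $M_n, M_{n-1}, \dots, M_1$:
\begin{equation*}
    \mathbb{E}[U_n[a,b]] \le \frac{\mathbb{E}[(M_1 - a)_+]}{b-a}.
\end{equation*}
The right-hand side is independent of $n$, so letting $U_\infty[a,b] := \lim_n U_n[a,b]$ denote the total number of upcrossings of $[a,b]$ by the whole sequence $(M_n)_{n \ge 1}$, monotone convergence gives $\mathbb{E}[U_\infty[a,b]] < \infty$, hence $U_\infty[a,b] < \infty$ a.s. Taking a countable dense family of pairs $(a,b)$ yields $M_n \to M_\infty$ almost surely for some $[-\infty,+\infty]$-valued limit $M_\infty$. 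Fatou's lemma together with $\sup_n \mathbb{E}|M_n| \le \mathbb{E}|M_1|$ ensures $\mathbb{E}|M_\infty| < \infty$, so $M_\infty$ is finite a.s.

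\textbf{Step 3: $L_1$ convergence and identification of the limit.} Combining almost sure convergence with the uniform integrability obtained in Step 1 upgrades the convergence to $L_1$: $M_n \to M_\infty$ in $L_1$. It remains to identify $M_\infty$ with $\mathbb{E}[M_1 \mid \mathcal{F}_\infty]$. Since $M_n$ is $\mathcal{F}_n$-measurable and $M_k \to M_\infty$ a.s. along $k \ge n$, $M_\infty$ is $\mathcal{F}_n$-measurable for every $n$, hence $\mathcal{F}_\infty$-measurable. Finally, for any $A \in \mathcal{F}_\infty \subset \mathcal{F}_n$, the $L_1$ convergence and the representation of Step 1 give
\begin{equation*}
    \mathbb{E}[M_\infty \mathbf{1}_A] = \lim_{n\to\infty} \mathbb{E}[M_n \mathbf{1}_A] = \lim_{n\to\infty} \mathbb{E}[\mathbb{E}[M_1 \mid \mathcal F_n]\mathbf{1}_A] = \mathbb{E}[M_1 \mathbf{1}_A],
\end{equation*}
which characterizes $M_\infty = \mathbb{E}[M_1 \mid \mathcal{F}_\infty]$ a.s. and concludes the proof.

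\textbf{Main obstacle.} The only subtle point is the almost sure convergence step, which requires applying Doob's upcrossing inequality in the right direction; once one notices that reversing the index order turns the backward martingale into an ordinary finite martingale, the bound on upcrossings is uniform in $n$ and the rest is standard. Uniform integrability, $L_1$ convergence and limit identification are essentially bookkeeping given the representation $M_n = \mathbb{E}[M_1 \mid \mathcal F_n]$.
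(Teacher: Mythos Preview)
Your argument is correct and is precisely the classical proof (representation $M_n=\mathbb{E}[M_1\mid\mathcal{F}_n]$, uniform integrability of conditional expectations, upcrossing inequality applied to the time-reversed finite martingale, then identification of the limit). The paper does not give its own proof of this theorem: it simply cites Doob~\cite{doob1953stochastic}, Section~7, Theorem~4.2, whose proof follows exactly the route you have written out.
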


\section{Additional results presented in Section~\ref{sec:hoeffding} and proofs}
\label{app:hoeffding}

\subsection{Orthogonality of projection spaces}

First, we prove Proposition~\ref{prop:ortho_proj}. This proposition relies on the fact that the projections are \textit{conditionally centered}. This property is given by the following lemma, the proof of which is provided in Appendix~\ref{app:hoeffding}.

\begin{lemma} Let $h$ be a kernel function of size $p \times q$. Let $(\mathbf{i},\mathbf{j}) \in \mathcal{P}_r(\mathbb{N}) \times \mathcal{P}_c(\mathbb{N})$, where $(0,0) < (r,c) \le (p,q)$. For all $\underline{\mathbf{i}} \subset \mathbf{i}$ and $\underline{\mathbf{j}} \subset \mathbf{j}$, we have
\begin{equation*}
    \E[p^{r,c}h(Y_{\mathbf{i}, \mathbf{j}}) \mid \mathcal{A}_{\underline{\mathbf{i}}, \underline{\mathbf{j}}}] = 0.
\end{equation*}
\label{lem:proj_smaller_cond_exp}
\end{lemma}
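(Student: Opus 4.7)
My plan is to prove the lemma by strong induction on $(r,c)$ with respect to the product order on $\mathbb{N}^2$. Note first that the statement can only be non-trivial when $(\underline{r},\underline{c}) := (\Card(\underline{\mathbf{i}}),\Card(\underline{\mathbf{j}})) < (r,c)$, since otherwise $\underline{\mathbf{i}} = \mathbf{i}$ and $\underline{\mathbf{j}} = \mathbf{j}$ and $p^{r,c}h(Y_{\mathbf{i},\mathbf{j}})$ is $\mathcal{A}_{\underline{\mathbf{i}},\underline{\mathbf{j}}}$-measurable. So I treat only that case.

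\textbf{Base case.} For $(r,c) \in \{(1,0),(0,1)\}$, the only admissible $(\underline{\mathbf{i}},\underline{\mathbf{j}})$ is $(\emptyset,\emptyset)$. Using $p^{1,0}h(Y_{\{i\},\emptyset}) = \psi^{1,0}h(Y_{\{i\},\emptyset}) - \E[h(Y_{\llbracket p \rrbracket,\llbracket q \rrbracket})]$, the tower property immediately yields $\E[p^{1,0}h(Y_{\{i\},\emptyset})] = 0$, and symmetrically for $(0,1)$.

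\textbf{Induction step.} Assume the statement holds for all $(r'',c'') < (r,c)$. Starting from the defining recursion~\eqref{eq:def_projection}, take conditional expectation with respect to $\mathcal{A}_{\underline{\mathbf{i}},\underline{\mathbf{j}}}$. Since $\mathcal{A}_{\underline{\mathbf{i}},\underline{\mathbf{j}}} \subset \mathcal{A}_{\mathbf{i},\mathbf{j}}$, the tower property gives
\begin{equation*}
\E[\psi^{r,c}h(Y_{\mathbf{i},\mathbf{j}}) \mid \mathcal{A}_{\underline{\mathbf{i}},\underline{\mathbf{j}}}] = \E[h(Y_{\mathbf{i},\mathbf{j}}) \mid \mathcal{A}_{\underline{\mathbf{i}},\underline{\mathbf{j}}}] = \psi^{\underline{r},\underline{c}}h(Y_{\underline{\mathbf{i}},\underline{\mathbf{j}}}).
\end{equation*}
For each term $p^{r',c'}h(Y_{\mathbf{i}',\mathbf{j}'})$ in the sum on the right-hand side of~\eqref{eq:def_projection}, I use that this quantity is a function of the AHK variables in $A(\mathbf{i}',\mathbf{j}')$, while $\mathcal{A}_{\underline{\mathbf{i}},\underline{\mathbf{j}}}$ is generated by $A(\underline{\mathbf{i}},\underline{\mathbf{j}})$. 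Since the underlying AHK variables are mutually independent, the variables of $A(\mathbf{i}',\mathbf{j}') \setminus A(\mathbf{i}' \cap \underline{\mathbf{i}}, \mathbf{j}' \cap \underline{\mathbf{j}})$ are independent of $\mathcal{A}_{\underline{\mathbf{i}},\underline{\mathbf{j}}}$, so
\begin{equation*}
\E[p^{r',c'}h(Y_{\mathbf{i}',\mathbf{j}'}) \mid \mathcal{A}_{\underline{\mathbf{i}},\underline{\mathbf{j}}}] = \E[p^{r',c'}h(Y_{\mathbf{i}',\mathbf{j}'}) \mid \mathcal{A}_{\mathbf{i}' \cap \underline{\mathbf{i}},\, \mathbf{j}' \cap \underline{\mathbf{j}}}].
\end{equation*}
Two cases arise. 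If $(\mathbf{i}' \cap \underline{\mathbf{i}}, \mathbf{j}' \cap \underline{\mathbf{j}}) \subsetneq (\mathbf{i}',\mathbf{j}')$, the induction hypothesis (applicable since $(r',c') < (r,c)$) gives $0$. Otherwise $\mathbf{i}' \subset \underline{\mathbf{i}}$ and $\mathbf{j}' \subset \underline{\mathbf{j}}$, so $p^{r',c'}h(Y_{\mathbf{i}',\mathbf{j}'})$ is $\mathcal{A}_{\underline{\mathbf{i}},\underline{\mathbf{j}}}$-measurable and survives unchanged.

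\textbf{Cancellation.} The surviving terms are indexed by $(r',c') < (r,c)$, $\mathbf{i}' \in \mathcal{P}_{r'}(\underline{\mathbf{i}})$, $\mathbf{j}' \in \mathcal{P}_{c'}(\underline{\mathbf{j}})$. Since $\mathbf{i}' \subset \underline{\mathbf{i}}$ and $\mathbf{j}' \subset \underline{\mathbf{j}}$ force $(r',c') \le (\underline{r},\underline{c})$, and since $(\underline{r},\underline{c}) < (r,c)$, the condition $(r',c')<(r,c)$ is automatic, so the surviving index set is exactly $\{(r',c') : (0,0) \le (r',c') \le (\underline{r},\underline{c})\}$ with the corresponding subsets of $\underline{\mathbf{i}},\underline{\mathbf{j}}$. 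Rearranging the recursion~\eqref{eq:def_projection} yields
\begin{equation*}
\psi^{\underline{r},\underline{c}}h(Y_{\underline{\mathbf{i}},\underline{\mathbf{j}}}) = \sum_{(0,0) \le (r',c') \le (\underline{r},\underline{c})} \sum_{\substack{\mathbf{i}' \in \mathcal{P}_{r'}(\underline{\mathbf{i}}) \\ \mathbf{j}' \in \mathcal{P}_{c'}(\underline{\mathbf{j}})}} p^{r',c'}h(Y_{\mathbf{i}',\mathbf{j}'}),
\end{equation*}
so the surviving terms are exactly $\psi^{\underline{r},\underline{c}}h(Y_{\underline{\mathbf{i}},\underline{\mathbf{j}}})$, which cancels the first contribution and gives $0$.

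\textbf{Main obstacle.} The technically delicate point is the reduction $\E[p^{r',c'}h(Y_{\mathbf{i}',\mathbf{j}'}) \mid \mathcal{A}_{\underline{\mathbf{i}},\underline{\mathbf{j}}}] = \E[p^{r',c'}h(Y_{\mathbf{i}',\mathbf{j}'}) \mid \mathcal{A}_{\mathbf{i}' \cap \underline{\mathbf{i}},\, \mathbf{j}' \cap \underline{\mathbf{j}}}]$; it relies crucially on the fact that the AHK variables are jointly independent (not merely the rows $\xi_i$, columns $\eta_j$, and entries $\zeta_{ij}$ taken separately), so that any subset of them is independent of its complement. Once this reduction is accepted, identifying the surviving terms and recognizing their sum as the decomposition of $\psi^{\underline{r},\underline{c}}h(Y_{\underline{\mathbf{i}},\underline{\mathbf{j}}})$ is a careful but mechanical bookkeeping step.
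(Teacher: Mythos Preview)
Your proof is correct and follows essentially the same route as the paper's: strong induction on $(r,c)$, the tower property to reduce $\E[\psi^{r,c}h\mid\mathcal{A}_{\underline{\mathbf{i}},\underline{\mathbf{j}}}]$ to $\psi^{\underline{r},\underline{c}}h(Y_{\underline{\mathbf{i}},\underline{\mathbf{j}}})$, the independence of the AHK variables to reduce conditioning on $\mathcal{A}_{\underline{\mathbf{i}},\underline{\mathbf{j}}}$ to conditioning on $\mathcal{A}_{\mathbf{i}'\cap\underline{\mathbf{i}},\,\mathbf{j}'\cap\underline{\mathbf{j}}}$, and finally the recognition that the surviving terms reconstitute $\psi^{\underline{r},\underline{c}}h(Y_{\underline{\mathbf{i}},\underline{\mathbf{j}}})$ via the recursion. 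Your ``survives vs.\ vanishes'' bookkeeping is slightly cleaner than the paper's, which instead singles out the term $(\mathbf{i}',\mathbf{j}')=(\underline{\mathbf{i}},\underline{\mathbf{j}})$ first, but the argument is the same.
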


\begin{proof}
We prove this lemma by induction on $(r,c)$ the sizes of $\mathbf{i}$ and $\mathbf{j}$. For $(r,c) = (1,0)$ and $(r,c) = (0,1)$, we have 
\begin{equation*}
\E[p^{1,0} h(Y_{\mathbf{i}, \emptyset}) \mid \mathcal{A}_{\emptyset, \emptyset}] = \E[\psi^{1,0}h(Y_{\mathbf{i}, \emptyset})] - \E[h(Y_{\llbracket p \rrbracket, \llbracket q \rrbracket})] = 0
\end{equation*}
and 
\begin{equation*}
\E[p^{0,1} h(Y_{\emptyset, \mathbf{j}}) \mid \mathcal{A}_{\emptyset, \emptyset}] = \E[\psi^{0,1}h(Y_{\emptyset, \mathbf{i}})] - \E[h(Y_{\llbracket p \rrbracket, \llbracket q \rrbracket})] = 0.
\end{equation*}

Suppose that the lemma is true for all $(0,0) < (r',c') < (r,c)$. Let $(\mathbf{i},\mathbf{j}) \in \mathcal{P}_r(\mathbb{N}) \times \mathcal{P}_c(\mathbb{N})$, $\underline{\mathbf{i}} \subset \mathbf{i}$ and $\underline{\mathbf{j}} \subset \mathbf{j}$. Denote $\underline{r} = \Card(\underline{\mathbf{i}})$ and $\underline{c} = \Card(\underline{\mathbf{j}})$. We can write
    \begin{equation*}
    \begin{split}
        \E[p^{r,c}h(Y_{\mathbf{i}, \mathbf{j}}) \mid \mathcal{A}_{\underline{\mathbf{i}}, \underline{\mathbf{j}}}] &= 
        \E[\psi^{r,c} h(Y_{\mathbf{i}, \mathbf{j}}) \mid \mathcal{A}_{\underline{\mathbf{i}}, \underline{\mathbf{j}}}] - \E[p^{\underline{r},\underline{c}}h(Y_{\underline{\mathbf{i}}, \underline{\mathbf{j}}}) \mid \mathcal{A}_{\underline{\mathbf{i}}, \underline{\mathbf{j}}}] \\
        &\quad - \sum_{(0,0) < (r',c') < (r,c)} \sum_{\substack{\mathbf{i}' \in \mathcal{P}_{r'}(\mathbf{i}), \mathbf{j}' \in \mathcal{P}_{c'}(\mathbf{j}) \\ (\mathbf{i}',\mathbf{j}') \neq (\underline{\mathbf{i}}, \underline{\mathbf{j}})}} \E[p^{r',c'}h(Y_{\mathbf{i}', \mathbf{j}'}) \mid \mathcal{A}_{\underline{\mathbf{i}}, \underline{\mathbf{j}}}] \\
        &= \sum_{(0,0) \le (r',c') < (\underline{r},\underline{c})} \sum_{\substack{\mathbf{i}' \in \mathcal{P}_{r'}(\underline{\mathbf{i}}), \mathbf{j}' \in \mathcal{P}_{c'}(\underline{\mathbf{j}})}} \E[p^{r',c'}h(Y_{\mathbf{i}', \mathbf{j}'}) \mid \mathcal{A}_{\underline{\mathbf{i}}, \underline{\mathbf{j}}}]\\
        &\quad - \sum_{(0,0) < (r',c') < (r,c)} \sum_{\substack{\mathbf{i}' \in \mathcal{P}_{r'}(\mathbf{i}), \mathbf{j}' \in \mathcal{P}_{c'}(\mathbf{j}) \\ (\mathbf{i}',\mathbf{j}') \neq (\underline{\mathbf{i}}, \underline{\mathbf{j}})}} \E[p^{r',c'}h(Y_{\mathbf{i}', \mathbf{j}'}) \mid \mathcal{A}_{\underline{\mathbf{i}}, \underline{\mathbf{j}}}] \\
        &= - \sum_{(0,0) < (r',c') < (r,c)} \sum_{\substack{\mathbf{i}' \in \mathcal{P}_{r'}(\mathbf{i}), \mathbf{j}' \in \mathcal{P}_{c'}(\mathbf{j}) \\ \mathbf{i}' \not\subset \underline{\mathbf{i}}, \mathbf{j}' \not\subset \underline{\mathbf{j}}}} \E[p^{r',c'}h(Y_{\mathbf{i}', \mathbf{j}'}) \mid \mathcal{A}_{\mathbf{i}' \cap \underline{\mathbf{i}}, \mathbf{j}' \cap \underline{\mathbf{j}}}].
    \end{split}
    \end{equation*}
    where we have used the fact that the $p^{r',c'}h(Y_{\mathbf{i}', \mathbf{j}'})$ are measurable by their respective $\mathcal{A}_{\mathbf{i}', \mathbf{j}'}$ so that $\E[p^{r',c'}h(Y_{\mathbf{i}', \mathbf{j}'}) \mid \mathcal{A}_{\underline{\mathbf{i}}, \underline{\mathbf{j}}}] = \E[p^{r',c'}h(Y_{\mathbf{i}', \mathbf{j}'}) \mid \mathcal{A}_{\mathbf{i}' \cap \underline{\mathbf{i}}, \mathbf{j}' \cap \underline{\mathbf{j}}}]$. 

    Since the last sum excludes the case $\mathbf{i}' = \underline{\mathbf{i}}$ and $\mathbf{j}' = \underline{\mathbf{j}}$, then the induction hypothesis ensures that all the terms are equal to $0$, so $\E[p^{r,c}h(Y_{\mathbf{i}, \mathbf{j}}) \mid \mathcal{A}_{\underline{\mathbf{i}}, \underline{\mathbf{j}}}] = 0$, which concludes the proof by induction.
\end{proof}

\begin{proof}[Proof of Proposition~\ref{prop:ortho_proj}]
    The two properties are proven similarly and derive from the fact that $(\mathbf{i}_1, \mathbf{j}_1) \neq (\mathbf{i}_2, \mathbf{j}_2)$. This is true for both properties.
    
    Consider any (possibly equal) $(r_1,c_1)$ and $(r_2,c_2)$ and associated $(\mathbf{i}_1, \mathbf{j}_1) \neq (\mathbf{i}_2, \mathbf{j}_2)$. Then $\mathbf{i}_1 \neq \mathbf{i}_2$ or $\mathbf{j}_1 \neq \mathbf{j}_2$. Without loss of generality, assume that $\mathbf{i}_1 \neq \mathbf{i}_2$ so there is an element $i_2 \in \mathbf{i}_2$ which is not included in $\mathbf{i}_1$. Then 
    \begin{equation*}
    \begin{split}
        \E[p^{r_1,c_1}h_1(Y_{\mathbf{i}_1,\mathbf{j}_1}) p^{r_2,c_2}h_2(Y_{\mathbf{i}_2,\mathbf{j}_2})] &= \E[\E[p^{r_1,c_1}h_1(Y_{\mathbf{i}_1,\mathbf{j}_1}) p^{r_2,c_2}h_2(Y_{\mathbf{i}_2,\mathbf{j}_2}) \mid \mathcal{A}_{\{i_2\}, \emptyset}]] \\
        &= \E[p^{r_1,c_1}h_1(Y_{\mathbf{i}_1,\mathbf{j}_1}) \E[p^{r_2,c_2}h_2(Y_{\mathbf{i}_2,\mathbf{j}_2}) \mid \mathcal{A}_{\{i_2\}, \emptyset}]] \\
        &= 0,
    \end{split}
    \end{equation*}
    since $\E[p^{r_2,c_2}h_2(Y_{\mathbf{i}_2,\mathbf{j}_2}) \mid \mathcal{A}_{\{i_2\}, \emptyset}] = 0$ from Lemma~\ref{lem:proj_smaller_cond_exp}.

    We have proven both properties, since the case $(r_1,c_1) \neq (r_2,c_2)$ corresponds to the first property and the case $(r_1,c_1) = (r_2,c_2)$ corresponds to the second property.
\end{proof}

\subsection{Variance decomposition}

Here, we state and prove Corollary~\ref{cor:ortho_ustats}, of which Corollary~\ref{cor:cov_ustats}, providing a variance decomposition for $U$-statistics, is a direct consequence. 

\begin{corollary}
    Let $h_1$ and $h_2$ two kernel functions of respective sizes $p_1 \times q_1$ and $p_2 \times q_2$. Let $(0,0) \le (r_1,c_1) < (p_1,q_1) $ and $ (0,0) \le (r_2,c_2) < (p_2,q_2)$.
    \begin{enumerate}
        \item If $(r_1,c_1) \neq (r_2,c_2)$, then 
        \begin{equation*}
            \Cov(P^{r_1,c_1}_{m,n}h_1(Y), P^{r_2,c_2}_{m,n}h_2(Y)) = 0.
        \end{equation*}
        \item If $(r_1,c_1) = (r_2,c_2) = (r,c)$, then 
        \begin{equation*}
        \begin{split}
            \Cov&(P^{r,c}_{m,n}h_1(Y), P^{r,c}_{m,n}h_2(Y)) \\
            &= \binom{m}{r}^{-1} \binom{n}{c}^{-1} \Cov(p^{r,c}h_1(Y_{\llbracket r \rrbracket,\llbracket c \rrbracket}), p^{r,c}h_2(Y_{\llbracket r \rrbracket,\llbracket c \rrbracket})).
        \end{split}
        \end{equation*}
    \end{enumerate}
    \label{cor:ortho_ustats}
\end{corollary}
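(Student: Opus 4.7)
The plan is to reduce both assertions to Proposition~\ref{prop:ortho_proj} by expanding the two $U$-statistics according to their definition
\[
P^{r,c}_{m,n}h_k(Y) = \binom{m}{r}^{-1}\binom{n}{c}^{-1} \sum_{\substack{\mathbf{i} \in \mathcal{P}_r(\llbracket m \rrbracket)\\ \mathbf{j} \in \mathcal{P}_c(\llbracket n \rrbracket)}} p^{r,c}h_k(Y_{\mathbf{i}, \mathbf{j}}),
\]
and using the bilinearity of the covariance. This rewrites $\Cov(P^{r_1,c_1}_{m,n}h_1, P^{r_2,c_2}_{m,n}h_2)$ as a normalized double sum of elementary covariances of the form $\Cov(p^{r_1,c_1}h_1(Y_{\mathbf{i}_1,\mathbf{j}_1}), p^{r_2,c_2}h_2(Y_{\mathbf{i}_2,\mathbf{j}_2}))$ indexed by pairs of subsets.

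For part (1), where $(r_1,c_1)\neq (r_2,c_2)$, every such elementary covariance vanishes by Proposition~\ref{prop:ortho_proj}(1) irrespective of $(\mathbf{i}_1,\mathbf{j}_1)$ and $(\mathbf{i}_2,\mathbf{j}_2)$, so the double sum is identically zero.

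For part (2), where $(r_1,c_1) = (r_2,c_2) = (r,c)$, Proposition~\ref{prop:ortho_proj}(2) shows that the elementary covariance is zero whenever $(\mathbf{i}_1,\mathbf{j}_1) \neq (\mathbf{i}_2,\mathbf{j}_2)$, so only the $\binom{m}{r}\binom{n}{c}$ diagonal terms survive. I then invoke row-column exchangeability to argue that each diagonal covariance equals the reference value $\Cov(p^{r,c}h_1(Y_{\llbracket r \rrbracket,\llbracket c \rrbracket}), p^{r,c}h_2(Y_{\llbracket r \rrbracket,\llbracket c \rrbracket}))$. Combining the count $\binom{m}{r}\binom{n}{c}$ with the global normalization $\binom{m}{r}^{-2}\binom{n}{c}^{-2}$ yields the announced factor $\binom{m}{r}^{-1}\binom{n}{c}^{-1}$.

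The main obstacle lies in the exchangeability step of part (2), namely in rigorously justifying that $\Cov(p^{r,c}h_1(Y_{\mathbf{i},\mathbf{j}}), p^{r,c}h_2(Y_{\mathbf{i},\mathbf{j}}))$ does not depend on the particular pair $(\mathbf{i},\mathbf{j}) \in \mathcal{P}_r(\llbracket m \rrbracket) \times \mathcal{P}_c(\llbracket n \rrbracket)$. Working from the AHK representation, I would verify by an induction on $(r,c)$ along the recursion~\eqref{eq:def_projection} that $p^{r,c}h_k(Y_{\mathbf{i},\mathbf{j}})$ is a measurable functional of the AHK variables $A(\mathbf{i},\mathbf{j})$ alone, with a form that is symmetric in $\mathbf{i}$ and in $\mathbf{j}$. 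Since the arrays $(\xi_i)$, $(\eta_j)$, $(\zeta_{ij})$ are i.i.d., the joint law of the pair $(p^{r,c}h_1(Y_{\mathbf{i},\mathbf{j}}), p^{r,c}h_2(Y_{\mathbf{i},\mathbf{j}}))$ then depends only on the cardinalities $r = \Card(\mathbf{i})$ and $c = \Card(\mathbf{j})$, which delivers the required reduction.
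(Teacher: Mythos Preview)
Your proposal is correct and follows essentially the same approach as the paper: expand both $U$-statistics into their defining sums, apply bilinearity of the covariance, and invoke Proposition~\ref{prop:ortho_proj} to kill all off-diagonal terms, leaving only the diagonal ones in part~(2). In fact, your treatment of the exchangeability step is more explicit than the paper's, which simply appeals to ``the exchangeability of $Y$'' without spelling out the AHK-based induction you sketch.
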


\begin{proof}
First, we see that for some $(r_1, c_1)$ and $(r_2, c_2)$,
    \begin{equation*}
    \begin{split}
        \Cov(&P^{r_1,c_1}_{m,n}h_1(Y), P^{r_2,c_2}_{m,n}h_2(Y)) = \left[\binom{m}{r_1}\binom{m}{r_2}\binom{n}{c_1}\binom{n}{c_2}\right]^{-1} \\
        &\sum_{\substack{\mathbf{i}_1 \in \mathcal{P}_{r_1}(\llbracket m \rrbracket) \\ \mathbf{j}_1 \in \mathcal{P}_{c_1}(\llbracket n \rrbracket)}} \sum_{\substack{\mathbf{i}_2 \in \mathcal{P}_{r_2}(\llbracket m \rrbracket) \\ \mathbf{j}_2 \in \mathcal{P}_{c_2}(\llbracket n \rrbracket)}} \Cov(p^{r_1,c_1} h_1(Y_{\mathbf{i}_1, \mathbf{j}_1}), p^{r_2,c_2} h_2(Y_{\mathbf{i}_2, \mathbf{j}_2})).
    \end{split}
    \end{equation*}
    If $(r_1,c_1) \neq (r_2,c_2)$, then from Proposition~\ref{prop:ortho_proj}, all the covariance terms $$\Cov(p^{r_1,c_1} h_1(Y_{\mathbf{i}_1, \mathbf{j}_1}), p^{r_2,c_2} h_2(Y_{\mathbf{i}_2, \mathbf{j}_2})) = 0,$$ so $\Cov(P^{r_1,c_1}_{m,n}h_1(Y), P^{r_2,c_2}_{m,n}h_2(Y)) = 0$ and that is the first part of the corollary.

    If $(r_1,c_1) = (r_2,c_2) = (r,c)$, then from Proposition~\ref{prop:ortho_proj}, the terms $$\Cov(p^{r,c} h_1(Y_{\mathbf{i}_1, \mathbf{j}_1}), p^{r,c} h_2(Y_{\mathbf{i}_2, \mathbf{j}_2})) = 0$$ if $(\mathbf{i}_1,\mathbf{j}_1) \neq (\mathbf{i}_2,\mathbf{j}_2)$. Using this fact and the exchangeability of $Y$, we have
    \begin{equation*}
    \begin{split}
            \Cov(&P^{r,c}_{m,n}h_1(Y), P^{r,c}_{m,n}h_2(Y)) \\
            &= \left[\binom{m}{r}\binom{n}{c}\right]^{-2} \sum_{\substack{\mathbf{i}_1 \in \mathcal{P}_{r}(\llbracket m \rrbracket) \\ \mathbf{j}_1 \in \mathcal{P}_{c}(\llbracket n \rrbracket)}} \sum_{\substack{\mathbf{i}_2 \in \mathcal{P}_{r}(\llbracket m \rrbracket) \\ \mathbf{j}_2 \in \mathcal{P}_{c}(\llbracket n \rrbracket)}} \Cov(p^{r_1,c_1} h_1(Y_{\mathbf{i}_1, \mathbf{j}_1}), p^{r_2,c_2} h_2(Y_{\mathbf{i}_2, \mathbf{j}_2})) \\
            &= \left[\binom{m}{r}\binom{n}{c}\right]^{-2} \sum_{\substack{\mathbf{i}_1 \in \mathcal{P}_{r}(\llbracket m \rrbracket) \\ \mathbf{j}_1 \in \mathcal{P}_{c}(\llbracket n \rrbracket)}} \Cov(p^{r_1,c_1} h_1(Y_{\mathbf{i}_1, \mathbf{j}_1}), p^{r_2,c_2} h_2(Y_{\mathbf{i}_1, \mathbf{j}_1})) \\
            &= \left[\binom{m}{r}\binom{n}{c}\right]^{-1} \Cov(p^{r,c}h_1(Y_{\llbracket r \rrbracket,\llbracket c \rrbracket}), p^{r,c}h_2(Y_{\llbracket r \rrbracket,\llbracket c \rrbracket})),
    \end{split}
    \end{equation*}
    which proves the second part of the corollary.
\end{proof}

\section{Additional results presented in Section~\ref{sec:asymptotic_normality} and proofs}
\label{app:asymptotic_normality}

\subsection{Proofs for the asymptotic normality of $U$-statistics}

\begin{proof}[Proof of Lemma~\ref{lem:asymptotic_normality_of_main_terms}]
    We only prove the first convergence result, since the second can be deduced by analogy.

    By definition,
    \begin{equation*}
        \hproj{1,0}{\{i\},\emptyset}h = \psi^{1,0}_{(\{i\},\emptyset)}h - \mathbb{E}[h(Y_{\llbracket p \rrbracket, \llbracket q \rrbracket)})].
    \end{equation*}

    First, we see that the $\hproj{1,0}{\{i\},\emptyset}h(Y)$ only depends on $\xi_i$, so all the $\hproj{1,0}{\{i\},\emptyset}h(Y)$ are independent. Because $h$ is symmetric and $Y$ is RCE, they are also identically distributed.

    By the tower rule, we also have $\E[\psi^{1,0}_{(\{i\},\emptyset)}h] = \mathbb{E}[h(Y_{\llbracket p \rrbracket, \llbracket q \rrbracket})]$ so $$\E[\hproj{1,0}{\{i\},\emptyset}h] = 0.$$
    
    Finally, we have, from the Aldous-Hoover representation theorem, then Cauchy-Schwarz's inequality, and the fact that $\E[h(Y_{\llbracket p \rrbracket, \llbracket q \rrbracket})^2] < \infty$,
    \begin{equation*}
    \begin{split}
        \V[\hproj{1,0}{\{i\},\emptyset}h] &= v^{1,0}_h \\
        &= \V[\E[h(Y_{\llbracket p \rrbracket, \llbracket q \rrbracket}) \mid \xi_i]] \\
        &= \Cov(h(Y_{(1,...,p;1,...,q)}), h(Y_{(1,p+1,...,2p-1;1,...,q)})) \\
        &< \V[h(Y_{\llbracket p \rrbracket, \llbracket q \rrbracket})] \\
        &< \infty.
    \end{split}
    \end{equation*}

    The classical CLT gives the desired result.
\end{proof}

\begin{proof}[Proof of Lemma~\ref{lem:clt_residue}]
For all $\mathbf{i} \in \mathcal{P}_{r}(\llbracket m \rrbracket)$ and $\mathbf{j} \in \mathcal{P}_{c}(\llbracket n \rrbracket)$ and $(r,c) \in \mathbb{N}^2$, we have $\E[\psi^{r,c}_{(\mathbf{i}, \mathbf{j})} h] = \E[h(Y_{\llbracket p \rrbracket, \llbracket q \rrbracket})]$.

By recursion, we have $\E[A_N] = \E[P^{r,c}_{N}h] = 0$ for all $(r,c) > (0,0)$ since
    \begin{equation*}
        \begin{split}
            \E[P^{r,c}_{N}h] &= \E[\hproj{r,c}{\mathbf{i}, \mathbf{j}} h] \\
            &= \E[\psi^{r,c}_{(\mathbf{i}, \mathbf{j})} h] - \sum_{(0,0) \le (r',c') < (r,c)} \sum_{\substack{\mathbf{i}' \in \mathcal{P}_{r'}(\mathbf{i}) \\ \mathbf{j}' \in \mathcal{P}_{c'}(\mathbf{j})}} \E[\hproj{r',c'}{\mathbf{i}', \mathbf{j}'}h].
        \end{split}
    \end{equation*}

    Now, Corollary~\ref{cor:cov_ustats} imply that $\Cov(P^{r,c}_{N}h, P^{r',c'}_{N}h) = 0$ unless $(r,c) = (r',c')$. So
    \begin{equation}
    \begin{split}
        \V[A_N] &= N \sum_{\substack{(0,0) < (r,c) \le (p,q) \\ (r,c) \neq (1,0) \neq (0,1)}} \sum_{\substack{(0,0) < (r',c') \le (p,q) \\ (r',c') \neq (1,0) \neq (0,1)}} \binom{p}{r} \binom{p}{r'} \binom{q}{c} \binom{q}{c'} \Cov(P^{r,c}_{N}h, P^{r',c'}_{N}h) \\ 
        &= N \sum_{\substack{(0,0) < (r,c) \le (p,q) \\ (r,c) \neq (1,0) \neq (0,1)}} \binom{p}{r}^2 \binom{q}{c}^2 \V[P^{r,c}_{N}h] \\
        &= N \sum_{\substack{(0,0) < (r,c) \le (p,q) \\ (r,c) \neq (1,0) \neq (0,1)}} \binom{p}{r}^2 \binom{q}{c}^2 \binom{m_N}{r}^{-1} \binom{n_N}{c}^{-1} \V[p^{r,c}_{(\llbracket r \rrbracket,\llbracket c \rrbracket)}h].
    \end{split}
    \end{equation}
    From equation~\ref{eq:def_projection_U}, we see that $h(Y_{\llbracket p \rrbracket, \llbracket q \rrbracket})$ is a linear combination of all the $p^{r,c}_{(\mathbf{i}, \mathbf{j})}h$, for $0 \le r \le p, 0 \le c \le q, \mathbf{i} \in \mathcal{P}_{r}(\llbracket p \rrbracket), \mathbf{j} \in \mathcal{P}_{c}(\llbracket q \rrbracket)$. So $\E[h(Y_{\llbracket p \rrbracket, \llbracket q \rrbracket})^2] < \infty$ ensures that $\V[p^{r,c}_{(\llbracket r \rrbracket,\llbracket c \rrbracket)}h] < \infty$. Therefore $\V[A_N] = O(N m_N^{-r} n_N^{-c}) = O(N^{1-r-c})$.

    Thus, we deduce from Markov's inequality that $A_N \xrightarrow[N \rightarrow \infty]{\mathbb{P}} 0$.
\end{proof}

\subsection{The asymptotic normality result for functions of $U$-statistics}

In this section, we prove that functions of $U$-statistics are also asymptotically normal under non-degeneracy conditions. First, we show that the limiting distribution of a vector of $U$-statistics is a multivariate normal distribution under the condition that all the kernel functions are linearly independent. However, if the kernel functions are of different sizes, the notion of linear independence is unclear. We need to define the concept of kernel extension to enunciate the corollary.

\begin{definition}
    Let $h$ be a kernel function of size $p \times q$. Let $p' \ge p$ and $q' \ge q$. We define the extension of $h$ to the size $p' \times q'$ by $\tilde{h}$ such that for all $\mathbf{i}' \in \mathcal{P}_{p'}(\mathbb{N})$ and $\mathbf{j}' \in \mathcal{P}_{q'}(\mathbb{N})$,
    \begin{equation*}
        \tilde{h}(Y_{\mathbf{i}',\mathbf{j}'}) = \left[\binom{p'}{p} \binom{q'}{q}\right]^{-1} \sum_{\substack{\mathbf{i} \subset \mathcal{P}_p(\mathbf{i}') \\ \mathbf{j} \subset \mathcal{P}_q(\mathbf{j}')}} h(Y_{\mathbf{i},\mathbf{j}}).
    \end{equation*}
\end{definition}

The extension of a kernel shares some properties with its kernel, as shown in the following lemma. 
\begin{lemma}
    \begin{enumerate}
        \item $U^{\tilde{h}}_N = U^{h}_N$,
        \item $\E[\tilde{h}(Y_{\llbracket p' \rrbracket, \llbracket q' \rrbracket})] = \E[h(Y_{\llbracket p \rrbracket, \llbracket q \rrbracket})]$,
        \item $\E[\tilde{h}(Y_{\llbracket p' \rrbracket, \llbracket q' \rrbracket})^2] \le \E[h(Y_{\llbracket p \rrbracket, \llbracket q \rrbracket})^2]$.
    \end{enumerate}
    \label{lem:kernel_extension}
\end{lemma}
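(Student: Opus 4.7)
The plan is to establish the three properties in order, relying only on the definition of the extension $\tilde h$, the row–column exchangeability of $Y$, and elementary binomial identities together with Jensen's inequality.

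For the first claim, I would substitute the definition of $\tilde h$ into the formula for $U^{\tilde h}_N$ and swap the order of summation. This yields
\begin{equation*}
U^{\tilde h}_N = \left[\binom{m_N}{p'}\binom{n_N}{q'}\binom{p'}{p}\binom{q'}{q}\right]^{-1} \sum_{\substack{\mathbf{i}\in\mathcal{P}_p(\llbracket m_N\rrbracket)\\ \mathbf{j}\in\mathcal{P}_q(\llbracket n_N\rrbracket)}} N(\mathbf{i},\mathbf{j})\, h(Y_{\mathbf{i},\mathbf{j}}),
\end{equation*}
where $N(\mathbf{i},\mathbf{j})$ counts the pairs $(\mathbf{i}',\mathbf{j}')\in\mathcal{P}_{p'}(\llbracket m_N\rrbracket)\times\mathcal{P}_{q'}(\llbracket n_N\rrbracket)$ with $\mathbf{i}\subset\mathbf{i}'$ and $\mathbf{j}\subset\mathbf{j}'$. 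That count equals $\binom{m_N-p}{p'-p}\binom{n_N-q}{q'-q}$, and the standard identity $\binom{m_N}{p'}\binom{p'}{p}=\binom{m_N}{p}\binom{m_N-p}{p'-p}$ (and its column analogue) collapses all binomial factors to $\binom{m_N}{p}^{-1}\binom{n_N}{q}^{-1}$, yielding exactly $U^h_N$.

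For the second claim, apply linearity of expectation in the definition of $\tilde h(Y_{\llbracket p'\rrbracket,\llbracket q'\rrbracket})$: each term is of the form $\E[h(Y_{\mathbf{i},\mathbf{j}})]$ for some $(\mathbf{i},\mathbf{j})\in\mathcal{P}_p(\llbracket p'\rrbracket)\times\mathcal{P}_q(\llbracket q'\rrbracket)$. By the RCE property, this equals $\E[h(Y_{\llbracket p\rrbracket,\llbracket q\rrbracket})]$, and the $\binom{p'}{p}\binom{q'}{q}$ identical terms cancel the normalizing prefactor.

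For the third claim, the key input is the convexity of $x\mapsto x^2$. Writing $\tilde h(Y_{\llbracket p'\rrbracket,\llbracket q'\rrbracket})$ as the average of the $\binom{p'}{p}\binom{q'}{q}$ values $h(Y_{\mathbf{i},\mathbf{j}})$ and applying Jensen's inequality gives
\begin{equation*}
\tilde h(Y_{\llbracket p'\rrbracket,\llbracket q'\rrbracket})^2 \;\le\; \left[\binom{p'}{p}\binom{q'}{q}\right]^{-1} \sum_{\substack{\mathbf{i}\in\mathcal{P}_p(\llbracket p'\rrbracket)\\ \mathbf{j}\in\mathcal{P}_q(\llbracket q'\rrbracket)}} h(Y_{\mathbf{i},\mathbf{j}})^2.
\end{equation*}
Taking expectations and using exchangeability to evaluate each term as $\E[h(Y_{\llbracket p\rrbracket,\llbracket q\rrbracket})^2]$ gives the stated bound. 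None of the three steps is really an obstacle; the only mildly delicate point is the counting identity needed in the first claim, but it is classical. The lemma is essentially a bookkeeping exercise built on the fact that $\tilde h$ is, by construction, the conditional average of $h$ over enlarged index sets, which automatically preserves means, $U$-statistics, and controls second moments by Jensen.
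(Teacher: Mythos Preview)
Your proof is correct and follows essentially the same approach as the paper. The paper dismisses parts 1 and 2 as ``straightforward'' (your explicit counting argument is exactly what underlies this), and for part 3 the paper expands the square as a double sum and uses the polarization identity $\E[h(Y_{\mathbf{i}_1,\mathbf{j}_1})h(Y_{\mathbf{i}_2,\mathbf{j}_2})] = \E[h(Y_{\llbracket p\rrbracket,\llbracket q\rrbracket})^2] - \tfrac{1}{2}\E[(h(Y_{\mathbf{i}_1,\mathbf{j}_1})-h(Y_{\mathbf{i}_2,\mathbf{j}_2}))^2]$ before dropping the nonnegative term, which is just an unpacked version of the Jensen step you use.
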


\begin{proof}
    The two first properties are straightforward. The third property stems from
    \begin{equation*}
        \begin{split}
            \E[&\tilde{h}(Y_{\llbracket p' \rrbracket, \llbracket q' \rrbracket})^2] \\
            &= \left[\binom{p'}{p} \binom{q'}{q}\right]^{-2} \sum_{\substack{\mathbf{i}_1 \subset \llbracket p' \rrbracket \\ \mathbf{j}_1 \subset \llbracket q' \rrbracket }} \sum_{\substack{\mathbf{i}_2 \subset \llbracket p' \rrbracket  \\ \mathbf{j}_2 \subset \llbracket q' \rrbracket}} \E[h(Y_{\mathbf{i}_1,\mathbf{j}_1})h(Y_{\mathbf{i}_2,\mathbf{j}_2})] \\
            &= \left[\binom{p'}{p} \binom{q'}{q}\right]^{-2} \sum_{\substack{\mathbf{i}_1 \subset \llbracket p' \rrbracket \\ \mathbf{j}_1 \subset \llbracket q' \rrbracket}} \sum_{\substack{\mathbf{i}_2 \subset \llbracket p' \rrbracket \\ \mathbf{j}_2 \subset \llbracket q' \rrbracket}} \left(\E[h(Y_{\llbracket p \rrbracket, \llbracket q \rrbracket})^2] \right. \\
            &\quad \left. - \frac{1}{2} \E[(h(Y_{\mathbf{i}_1,\mathbf{j}_1}) - h(Y_{\mathbf{i}_2,\mathbf{j}_2}))^2]\right) \\
            &\le \left[\binom{p'}{p} \binom{q'}{q}\right]^{-2} \sum_{\substack{\mathbf{i}_1 \subset \llbracket p' \rrbracket \\ \mathbf{j}_1 \subset \llbracket q' \rrbracket}} \sum_{\substack{\mathbf{i}_2 \subset \llbracket p' \rrbracket \\ \mathbf{j}_2 \subset \llbracket q' \rrbracket}} \E[h(Y_{\llbracket p \rrbracket, \llbracket q \rrbracket})^2] \\
            &\le \E[h(Y_{\llbracket p \rrbracket, \llbracket q \rrbracket})^2].
        \end{split}
    \end{equation*}
\end{proof}

With this definition, we can define the linear independence of kernel functions needed for the following corollary as the linear independence of their kernel extensions. Denote $c^{r,c}_{h_k,h_\ell} := \Cov\left(\psi^{r,c}_{(\llbracket r \rrbracket, \llbracket c \rrbracket)} h_k, \psi^{r,c}_{(\llbracket r \rrbracket, \llbracket c \rrbracket)} h_\ell \right)$.

\begin{corollary}
    Let $Y$ be a dissociated RCE matrix. Let $(h_1, h_2, ..., h_D)$ be a vector of kernel functions of respective sizes $p_1 \times q_1, p_2 \times q_2, ..., p_D \times q_D$ such that 
  \begin{enumerate}
      \item Theorem~\ref{th:asymptotic_normality} applies for each kernel function, i.e. $\mathbb{E}[h_k(Y_{\llbracket p_k \rrbracket, \llbracket q_k \rrbracket})^2] < \infty$ and $U^{h_k}_\infty$ and $V^{h_k}$ are as defined in Theorem~\ref{th:asymptotic_normality} for each kernel $h_k$, $1 \le k \le D$,
      \item for $t \in \mathbb{R}^D$, $\sum_{k=1}^D t_k \tilde{h}_k \equiv 0$ if and only if $t = (0, ..., 0)$, where for $1 \le k \le D$, $\tilde{h}_k$ is the extension of $h_k$ to   size $\max_k(p_k) \times \max_k(q_k)$.
  \end{enumerate} 
  Then
  \begin{equation*}
      \sqrt{N}\left(\begin{pmatrix} U^{h_1}_N  \\ U^{h_2}_N \\ ... \\ U^{h_D}_N 
    \end{pmatrix}-\begin{pmatrix} U^{h_1}_\infty  \\ U^{h_2}_\infty \\ ... \\ U^{h_D}_\infty 
    \end{pmatrix}\right) \xrightarrow[N \rightarrow \infty]{\mathcal{D}} \mathcal{N}(0, \Sigma^{h_1,...,h_D}),
  \end{equation*}
  with
  \begin{equation*}
      \Sigma^{h_1,...,h_D} = \left(C^{h_k, h_\ell} \right)_{1 \le k,\ell \le D},
  \end{equation*}
  where $C^{h_k,h_\ell} = \frac{p^2}{\rho} c^{1,0}_{h_k,h_\ell} + \frac{q^2}{1-\rho} c^{0,1}_{h_k,h_\ell}$ for all $1 \le k,\ell \le D$ (and $C^{h_k,h_k} = V^{h_k}$).
  \label{cor:joint_asymptotic_normality}
\end{corollary}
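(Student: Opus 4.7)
The plan is to derive the joint convergence from the marginal CLT of Theorem~\ref{th:asymptotic_normality} via the Cramér-Wold device, using the kernel-extension operation of Lemma~\ref{lem:kernel_extension} to reduce a linear combination of $U$-statistics with different kernel sizes to a single $U$-statistic. Fix $t = (t_1, \dots, t_D) \in \mathbb{R}^D$ and set $P = \max_k p_k$, $Q = \max_k q_k$. For each $k$, let $\tilde{h}_k$ denote the extension of $h_k$ to size $P \times Q$, and set $h_t := \sum_{k=1}^D t_k \tilde{h}_k$. By parts (1) and (2) of Lemma~\ref{lem:kernel_extension}, $U^{\tilde{h}_k}_N = U^{h_k}_N$ and $\mathbb{E}[\tilde{h}_k(Y_{\llbracket P \rrbracket, \llbracket Q \rrbracket})] = U^{h_k}_\infty$, so
\begin{equation*}
\sum_{k=1}^D t_k \sqrt{N}\bigl(U^{h_k}_N - U^{h_k}_\infty\bigr) = \sqrt{N}\bigl(U^{h_t}_N - U^{h_t}_\infty\bigr).
\end{equation*}

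By part (3) of the same lemma, $\mathbb{E}[\tilde{h}_k(Y_{\llbracket P \rrbracket, \llbracket Q \rrbracket})^2] < \infty$ for each $k$, so $\mathbb{E}[h_t(Y_{\llbracket P \rrbracket, \llbracket Q \rrbracket})^2] < \infty$ by the triangle inequality in $L^2$. Theorem~\ref{th:asymptotic_normality} applied to $h_t$ (together with the subsequent Remark handling the degenerate case $V^{h_t} = 0$) then yields $\sqrt{N}(U^{h_t}_N - U^{h_t}_\infty) \xrightarrow[N \rightarrow \infty]{\mathcal{D}} \mathcal{N}(0, V^{h_t})$. It remains to identify $V^{h_t} = t^T \Sigma^{h_1,\dots,h_D} t$. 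The key combinatorial step is that, by conditioning on $\xi_1$ inside the definition of $\tilde{h}_k$ and counting how many row-subsets in $\mathcal{P}_{p_k}(\llbracket P \rrbracket)$ contain the index $1$, one obtains
\begin{equation*}
\psi^{1,0}_{(\{1\}, \emptyset)} \tilde{h}_k = \frac{p_k}{P}\, \psi^{1,0}_{(\{1\}, \emptyset)} h_k + \frac{P-p_k}{P}\, U^{h_k}_\infty,
\end{equation*}
and symmetrically $\psi^{0,1}_{(\emptyset, \{1\})} \tilde{h}_k = \frac{q_k}{Q}\, \psi^{0,1}_{(\emptyset, \{1\})} h_k + \frac{Q-q_k}{Q}\, U^{h_k}_\infty$.

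Bilinearity of the covariance then gives $v^{1,0}_{h_t} = \sum_{k,\ell} t_k t_\ell \frac{p_k p_\ell}{P^2} c^{1,0}_{h_k, h_\ell}$ and $v^{0,1}_{h_t} = \sum_{k,\ell} t_k t_\ell \frac{q_k q_\ell}{Q^2} c^{0,1}_{h_k, h_\ell}$. Substituting into $V^{h_t} = \frac{P^2}{\rho} v^{1,0}_{h_t} + \frac{Q^2}{1-\rho} v^{0,1}_{h_t}$ cancels the factors $P^2$ and $Q^2$ and produces the quadratic form $t^T \Sigma^{h_1,\dots,h_D} t$, with the entries $C^{h_k, h_\ell} = \frac{p_k p_\ell}{\rho} c^{1,0}_{h_k, h_\ell} + \frac{q_k q_\ell}{1-\rho} c^{0,1}_{h_k, h_\ell}$ (which reduces to the stated $C^{h_k, h_k} = V^{h_k}$ on the diagonal). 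Since $t$ was arbitrary, the Cramér-Wold device yields the claimed joint convergence; the linear-independence hypothesis on the $\tilde{h}_k$ is what ensures that the limiting covariance matrix is not artificially degenerate through redundancy of the kernels, but the argument above does not require it for the weak convergence itself.

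The main obstacle is the variance identification, i.e.\ showing cleanly how the conditional expectations of the extension $\tilde{h}_k$ relate to those of the original kernel $h_k$ through the ratios $p_k/P$ and $q_k/Q$. Once this identity is in hand, the cancellation with the $P^2$ and $Q^2$ factors in the formula for $V^{h_t}$ is automatic, and the rest is a routine Cramér-Wold argument combined with Lemma~\ref{lem:kernel_extension}.
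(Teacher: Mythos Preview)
Your argument is correct and follows the same overall architecture as the paper's proof: both build the combined kernel $h_t = \sum_k t_k \tilde{h}_k$ via Lemma~\ref{lem:kernel_extension}, check $\E[h_t^2] < \infty$, apply Theorem~\ref{th:asymptotic_normality} to $h_t$, and conclude by Cram\'er--Wold. The one genuine difference is how you identify $V^{h_t}$. The paper takes the route through Corollary~\ref{cor:cov_ustats}, writing $V^{h_t} = \lim_N N \sum_{k,\ell} t_k t_\ell \Cov(U^{h_k}_N, U^{h_\ell}_N)$ and reading off each limit from the variance decomposition. You instead compute the first-order conditional expectations of the extension directly: your identity $\psi^{1,0}_{(\{1\},\emptyset)}\tilde{h}_k = \tfrac{p_k}{P}\psi^{1,0}_{(\{1\},\emptyset)}h_k + \tfrac{P-p_k}{P}U^{h_k}_\infty$ is exactly right (the coefficient is $\binom{P-1}{p_k-1}/\binom{P}{p_k} = p_k/P$), and the cancellation of $P^2$, $Q^2$ follows as you say. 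Your route is more self-contained since it bypasses Corollary~\ref{cor:cov_ustats}; the paper's route is shorter given that corollary is already available. Your handling of the degenerate case $V^{h_t}=0$ via the Remark after Theorem~\ref{th:asymptotic_normality} is actually cleaner than the paper's, which splits on $t=0$ versus $t\neq 0$ but does not explicitly address $t\neq 0$ with $V^{h_t}=0$. Your closing remark that the linear-independence hypothesis plays no role in the weak-convergence argument itself is also correct.
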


\begin{proof}
First, we show that  $\lim_{N \rightarrow +\infty} N \Cov(U^{h_k}_N, U^{h_\ell}_N)$

Let $(Z^{h_k})_{1 \le k \le D}$ be a vector of random variables following a centered multivariate Gaussian distribution with covariance matrix $\Sigma^{h_1,...,h_D}$ defined in the theorem. Then $Z^{h_k} \sim \mathcal{N}(0, V^{h_k})$ for all $1 \le k \le D$ and $\Cov(Z^{h_k}, Z^{h_\ell}) = C^{h_k, h_\ell}$ for all $1 \le k \le D$ and $1 \le \ell \le D$.
  
  Denote $p := \max_k(p_k)$ and $q := \max_k(q_k)$. For some $t = (t_1, t_2, ..., t_n) \in \mathbb{R}^n$, we set $\tilde{h}_t := t_1 \tilde{h}_1 + t_2 \tilde{h}_2 + ... + t_n \tilde{h}_D$. $\tilde{h}_t$ is a kernel function of size $p \times q$. 
  
  First, assume that $t \neq (0,...,0)$. Then by hypothesis, $\tilde{h}_t \not\equiv 0$, therefore Lemma~\ref{lem:kernel_extension} implies that $\sum_{k=1}^D t_k U^{h_k}_N = \sum_{k=1}^D t_k U^{\tilde{h}_k}_N = U^{\tilde{h}_t}_N$, the $U$-statistic with quadruplet kernel $\tilde{h}_t$ (of size $p \times q$). Using Cauchy-Schwarz inequality and the fact that from Lemma~\ref{lem:kernel_extension}, $\mathbb{E}[\tilde{h}_k(Y_{\llbracket p \rrbracket, \llbracket q \rrbracket})^2] \le \mathbb{E}[h_k(Y_{\llbracket p_k \rrbracket, \llbracket q_k \rrbracket})^2] < \infty$ for all $1 \le k \le D$, we have furthermore 
  \begin{equation*}
  \begin{split}
      \mathbb{E}[&\tilde{h}_t(Y_{\llbracket p \rrbracket, \llbracket q \rrbracket})^2] \\
      =&~ \sum_{k=1}^n t_k^2 \mathbb{E}[\tilde{h}_k(Y_{\llbracket p \rrbracket, \llbracket q \rrbracket})^2] + 2 \sum_{1\le k\neq\ell \le D} t_k t_\ell \mathbb{E}[\tilde{h}_k(Y_{\llbracket p \rrbracket, \llbracket q \rrbracket})\tilde{h}_\ell(Y_{\llbracket p \rrbracket, \llbracket q \rrbracket})], \\
      \le&~ \sum_{k=1}^n t_k^2 \mathbb{E}[\tilde{h}_k(Y_{\llbracket p \rrbracket, \llbracket q \rrbracket})^2] + 2 \sum_{1\le k\neq\ell \le D} t_k t_\ell \sqrt{\mathbb{E}[\tilde{h}_k(Y_{\llbracket p \rrbracket, \llbracket q \rrbracket})^2]\mathbb{E}[\tilde{h}_\ell(Y_{\llbracket p \rrbracket, \llbracket q \rrbracket})^2]}, \\
      <&~ \infty.
  \end{split}
  \end{equation*}
  Therefore, Theorem~\ref{th:asymptotic_normality} also applies for $U^{\tilde{h}_t}_N$ and $\sqrt{N} (U^{\tilde{h}_t}_N - U^{\tilde{h}_t}_\infty) \xrightarrow[N \rightarrow \infty]{\mathcal{D}} \mathcal{N}(0, V^{\tilde{h}_t})$, where
  \begin{itemize}
      \item $U^{\tilde{h}_t}_\infty = \sum_{k=1}^D t_k U^{h_k}_\infty$, 
      \item $V^{\tilde{h}_t} = t^T \Sigma^{h_1,...,h_D} t$.
  \end{itemize} 
  The second point comes from the fact that $$V^{\tilde{h}_t} = \lim_{N \rightarrow +\infty} N \sum_{k=1}^D \sum_{\ell=1}^D t_k t_\ell \Cov(U^{h_k}_N, U^{h_\ell}_N)$$ and by Corollary~\ref{cor:cov_ustats},  $\lim_{N \rightarrow +\infty} N \Cov(U^{h_k}_N, U^{h_\ell}_N) = \frac{p^2}{c} c^{1,0}_{h_k, h_\ell} + \frac{q^2}{1-c} c^{0,1}_{h_k, h_\ell} = \Sigma^{h_1,...,h_D}_{k\ell}$. Therefore, we can conclude that $\sqrt{N} \sum_{k=1}^D t_k (U^{h_k}_N - U^{h_k}_\infty) = \sqrt{N} (U^{\tilde{h}_t}_N - U^{\tilde{h}_t}_\infty) \xrightarrow[N \rightarrow \infty]{\mathcal{D}} \sum_{k=1}^D t_k Z^{h_k}$.
  
  Now assume that $t = (0,...,0)$. Then $\tilde{h}_t \equiv 0$ so $U^{\tilde{h}_t}_N = 0 = \sum_{k=1}^D t_k Z^{h_k}$. Therefore, $\sqrt{N} \sum_{k=1}^D t_k (U^{h_k}_N - U^{h_k}_\infty) = \sqrt{N} (U^{\tilde{h}_t}_N - U^{\tilde{h}_t}_\infty) \xrightarrow[N \rightarrow \infty]{\mathcal{D}} \sum_{k=1}^D t_k Z^{h_k}$ is still true. 
  
  We have proven that $\sqrt{N} \sum_{k=1}^D t_k (U^{h_k}_N - U^{h_k}_\infty) \xrightarrow[N \rightarrow \infty]{\mathcal{D}} \sum_{k=1}^D t_k Z^{h_k}$ for all $t \in \mathbb{R}^n$, so we can finally apply the Cramér-Wold theorem (Theorem 29.4 of~\cite{billingsley1995probability}) which states that $\sqrt{N}\left(U^{h_k}_N - U^{h_k}_\infty\right)_{1\le k \le D}$ converges jointly in distribution to $(Z^{h_k})_{1 \le k \le D}$, which is a centered multivariate Gaussian with covariance matrix $\Sigma^{h_1,...,h_D}$.
\end{proof}

Although, this corollary requires the kernel functions of $(h_1, h_2, ..., h_D)$ to be linearly independent, the corresponding $U$-statistics are not independent random variables, even asymptotically, because $\Sigma^{h_1,...,h_D}$ is not a diagonal matrix. One consequence of this corollary is that Theorem~\ref{th:asymptotic_normality} can be extended to differentiable functions of $U$-statistics.

\begin{corollary}
    Let $h_1, ..., h_D$ be $D$ kernel functions such that Corollary~\ref{cor:joint_asymptotic_normality} applies and $\Sigma^{h_1,...,h_D} = \left(C^{h_k, h_\ell} \right)_{1 \le k,\ell \le D}$. Denote $\theta = (U^{h_1}_\infty, ..., U^{h_D}_\infty)$. Let $g : \mathbb{R}^d \rightarrow \mathbb{R}$ be a differentiable function at $\theta$. Denote $\nabla g$ the gradient of g and $V^\delta := \nabla g(\theta)^T \Sigma^{h_1,...,h_D} \nabla g(\theta)$. If $V^\delta > 0$, then
    \begin{equation*}
        \sqrt{N} (g(U^{h_1}_N, ..., U^{h_D}_N) - g(\theta)) \xrightarrow[N \rightarrow \infty]{\mathcal{D}} \mathcal{N}(0, V^\delta).
    \end{equation*}
  \label{cor:delta_method}
\end{corollary}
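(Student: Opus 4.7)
The plan is to invoke the multivariate delta method, using Corollary~\ref{cor:joint_asymptotic_normality} as the starting point to produce the joint Gaussian limit and then applying a first-order Taylor expansion of $g$ around $\theta$.

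First I would observe that Corollary~\ref{cor:joint_asymptotic_normality} gives $\sqrt{N}\,(\mathbf{U}_N - \theta) \xrightarrow[N \to \infty]{\mathcal{D}} Z$, where $\mathbf{U}_N := (U^{h_1}_N, \ldots, U^{h_D}_N)$ and $Z \sim \mathcal{N}(0, \Sigma^{h_1,\ldots,h_D})$. This in particular yields the consistency $\mathbf{U}_N \xrightarrow[N \to \infty]{\mathbb{P}} \theta$, by the continuous mapping theorem (or directly from the fact that $(\mathbf{U}_N - \theta) = N^{-1/2} \cdot \sqrt{N}(\mathbf{U}_N - \theta)$ with the second factor tight).

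Next, since $g$ is differentiable at $\theta$, there exists a function $\epsilon : \mathbb{R}^D \to \mathbb{R}$ with $\epsilon(u) \to 0$ as $u \to \theta$ such that
\begin{equation*}
g(u) - g(\theta) = \nabla g(\theta)^T (u - \theta) + \epsilon(u) \, \lVert u - \theta \rVert.
\end{equation*}
Substituting $u = \mathbf{U}_N$ and multiplying by $\sqrt{N}$ gives
\begin{equation*}
\sqrt{N}\,(g(\mathbf{U}_N) - g(\theta)) = \nabla g(\theta)^T \sqrt{N}\,(\mathbf{U}_N - \theta) + \epsilon(\mathbf{U}_N)\,\lVert \sqrt{N}\,(\mathbf{U}_N - \theta) \rVert.
\end{equation*}
The main term is a continuous linear functional of $\sqrt{N}(\mathbf{U}_N - \theta)$, so by the continuous mapping theorem it converges in distribution to $\nabla g(\theta)^T Z$, which is a centered Gaussian with variance $\nabla g(\theta)^T \Sigma^{h_1,\ldots,h_D} \nabla g(\theta) = V^\delta$.

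For the remainder, consistency of $\mathbf{U}_N$ together with continuity of $\epsilon$ at $\theta$ with $\epsilon(\theta) = 0$ gives $\epsilon(\mathbf{U}_N) \xrightarrow[N \to \infty]{\mathbb{P}} 0$, while $\lVert \sqrt{N}(\mathbf{U}_N - \theta) \rVert$ is tight as it converges in distribution to $\lVert Z \rVert$. Slutsky's theorem then implies the product converges to $0$ in probability, and a final application of Slutsky's theorem combines this with the main term to yield
\begin{equation*}
\sqrt{N}\,(g(\mathbf{U}_N) - g(\theta)) \xrightarrow[N \to \infty]{\mathcal{D}} \mathcal{N}(0, V^\delta).
\end{equation*}
There is no real obstacle here beyond the standard care needed for the remainder — the hypothesis $V^\delta > 0$ only serves to make the limit non-degenerate, and the hypotheses of Corollary~\ref{cor:joint_asymptotic_normality} (in particular the linear independence of the extended kernels) have already done the heavy lifting by producing the joint CLT.
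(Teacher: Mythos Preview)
Your proof is correct and follows the same approach as the paper: both invoke Corollary~\ref{cor:joint_asymptotic_normality} for the joint CLT and then apply the delta method via a first-order Taylor expansion. The paper simply cites the delta method (Theorem~3.1 of van der Vaart) rather than spelling out the remainder argument as you have done.
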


\begin{proof}
     The first-order Taylor expansion of $g$ at $\theta$ is written
     \begin{equation*}
         g(U^{h_1}_N, ..., U^{h_D}_N) - g(\theta) = \nabla g(\theta)^T \left((U^{h_1}_N, ..., U^{h_D}_N)-\theta\right) + o_P\left( \lvert \lvert (U^{h_1}_N, ..., U^{h_D}_N)-\theta \rvert \rvert \right).
     \end{equation*}
     From Corollary~\ref{cor:joint_asymptotic_normality}, $\sqrt{N} \left((U^{h_1}_N, ..., U^{h_D}_N)-\theta\right)$ converges to a multivariate normal distribution with asymptotic covariance matrix $\Sigma^{h_1,...,h_D}$, so the delta method (see Theorem 3.1 of~\cite{van2000asymptotic}) can be applied to prove this proposition.
\end{proof}

\section{Additional results presented in Section~\ref{sec:variance_estimator} and proofs}
\label{app:variance_estimator}

\subsection{Some useful notations and results}

Consider the sets $\mathcal{S}^{p,q}_{N}$ and $\mathcal{S}^{p,q}_{N , (\mathbf{\underline{i}}, \mathbf{\underline{j}}) }$ defined in Section~\ref{subsec:notations}. It is obvious that
\begin{equation*}
    \Card(\mathcal{S}^{p,q}_{N}) = \binom{m_N}{p} \binom{n_N}{q}
 \quad \quad \mbox{and} \quad \quad
    \Card(\mathcal{S}^{p,q}_{N , (\mathbf{\underline{i}}, \mathbf{\underline{j}}) }) = \binom{m_N - \underline{p}}{p-\underline{p}} \binom{n_N - \underline{q}}{q - \underline{q}}.
\end{equation*}

Let $I_K = (\mathbf{\underline{i}}_1, ..., \mathbf{\underline{i}}_K)$ be a $K$-uplet of sets of row indices and $J_K = (\mathbf{\underline{j}}_1, ..., \mathbf{\underline{j}}_K)$ a $K$-uplet of subsets of column indices. Denote
\begin{equation*}
    \mathcal{T}^{p,q}_{N,(I_K,J_K)} := \mathcal{S}^{p,q}_{N , (\mathbf{\underline{i}}_1, \mathbf{\underline{j}}_1) } \times \mathcal{S}^{p,q}_{N , (\mathbf{\underline{i}}_2, \mathbf{\underline{j}}_2) } \times ... \times \mathcal{S}^{p,q}_{N , (\mathbf{\underline{i}}_K, \mathbf{\underline{j}}_K) }.
\end{equation*}
In the rest of the paper, we will often see averages of the type 
\begin{equation}
    T^{p,q}_N(I_K,J_K) := \frac{1}{\prod_{k=1}^K \Card( \mathcal{T}^{p,q}_{N,(I_K,J_K)})} \sum_{\mathcal{T}^{p,q}_{N,(I_K,J_K)}} \E[X_{\mathbf{i}_1, \mathbf{j}_1}X_{\mathbf{i}_2, \mathbf{j}_2}...X_{\mathbf{i}_K, \mathbf{j}_K}].
    \label{eq:def_t}
\end{equation}
As a remark, $T^{p,q}_N((\emptyset),(\emptyset)) = U^h_N$.

By exchangeability, the quantities $\E[X_{(\mathbf{i}_1, \mathbf{j}_1)}X_{(\mathbf{i}_2, \mathbf{j}_2)}...X_{(\mathbf{i}_K, \mathbf{j}_K)}]$ do not depend on the row indices that do not belong to any pairwise intersection of the $(\mathbf{i}_k)_{1 \le k \le K}$. The same holds for column indices that do not belong to any pairwise intersection of the $(\mathbf{j}_k)_{1 \le k \le K}$. Therefore, assuming $m_N \ge \Card(\cup_{k=1}^{K} \mathbf{\underline{i}}_k)$ and $n_N \ge \Card(\cup_{k=1}^{K} \mathbf{\underline{j}}_k)$, we can define
\begin{equation}
    \alpha(I_K,J_K) := \E[X_{\mathbf{\bar{\underline{i}}}_1, \mathbf{\bar{\underline{j}}}_1}X_{\mathbf{\bar{\underline{i}}}_2,\mathbf{\bar{\underline{j}}}_2}...X_{\mathbf{\bar{\underline{i}}}_K, \mathbf{\bar{\underline{j}}}_K}]
    \label{eq:alpha}
\end{equation}
where for $1 \le k \le K$, the $p$-uplet $\mathbf{\bar{\underline{i}}}_k$ only consists of elements of $\mathbf{\underline{i}}_k$ and elements that are not in any of the other $\mathbf{\underline{i}}_{k'}$, i.e. the $\mathbf{\bar{\underline{i}}}_k$ are of the form $\mathbf{\bar{\underline{i}}}_k = \mathbf{\underline{i}}_k \cup \tilde{\mathbf{i}}_k$ where $\cap_{k=1}^K \tilde{\mathbf{i}}_k = \emptyset$ and $(\cup_{k=1}^K \tilde{\mathbf{i}}_k) \cap (\cup_{k=1}^K \underline{\mathbf{i}}_k) = \emptyset$. 

The following lemma will be helpful in later proofs as it provides the asymptotic behavior of $T^{p,q}_N(I_K,J_K)$. It shows that these averages can be reduced to one dominant expectation term given by $\alpha(I_K,J_K)$ and a remainder vanishing as $N$ grows.

\begin{lemma}
Let $I_K = (\mathbf{\underline{i}}_1, ..., \mathbf{\underline{i}}_K)$ and $J_K = (\mathbf{\underline{j}}_1, ..., \mathbf{\underline{j}}_K)$ be $K$-uplets of respectively row and column indices. Let $\alpha(I_K, J_K)$ defined by~\eqref{eq:alpha}. We have
    \begin{equation*}
        T^{p,q}_N(I_K,J_K) = \alpha(I_K,J_K) + O\left(m_N^{-1} + n_N^{-1} \right).
    \end{equation*}
    \label{lem:expectation_combi}
\end{lemma}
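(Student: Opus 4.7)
The plan is to partition the summation defining $T^{p,q}_N(I_K,J_K)$ in~\eqref{eq:def_t} according to the overlap structure of the $K$ index pairs, isolating a dominant ``generic'' contribution that yields $\alpha(I_K, J_K)$ exactly, and controlling the remainder by combinatorial counting plus a uniform moment bound.

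First, I would call a configuration $((\mathbf{i}_1, \mathbf{j}_1), \ldots, (\mathbf{i}_K, \mathbf{j}_K)) \in \mathcal{T}^{p,q}_{N,(I_K,J_K)}$ \emph{generic} if, for every pair $k \neq k'$, one has $\mathbf{i}_k \cap \mathbf{i}_{k'} = \underline{\mathbf{i}}_k \cap \underline{\mathbf{i}}_{k'}$ and $\mathbf{j}_k \cap \mathbf{j}_{k'} = \underline{\mathbf{j}}_k \cap \underline{\mathbf{j}}_{k'}$, i.e.\ the free parts $\mathbf{i}_k \setminus \underline{\mathbf{i}}_k$ and $\mathbf{j}_k \setminus \underline{\mathbf{j}}_k$ do not create any accidental overlap beyond the one already prescribed by $I_K$ and $J_K$. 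Let $\mathcal{G}_N$ denote the set of generic configurations. By row--column exchangeability of $Y$ (applied to a permutation that maps each generic configuration to the canonical one defining $\alpha(I_K,J_K)$ in~\eqref{eq:alpha}), every generic configuration satisfies $\E[X_{\mathbf{i}_1, \mathbf{j}_1} \cdots X_{\mathbf{i}_K, \mathbf{j}_K}] = \alpha(I_K, J_K)$, so that
$$T^{p,q}_N(I_K,J_K) - \alpha(I_K,J_K) = \bigl(\tfrac{\Card(\mathcal{G}_N)}{\Card(\mathcal{T}^{p,q}_{N,(I_K,J_K)})}-1\bigr)\alpha(I_K,J_K) + \tfrac{1}{\Card(\mathcal{T}^{p,q}_{N,(I_K,J_K)})}\sum_{\text{non-generic}} \E[X_{\mathbf{i}_1, \mathbf{j}_1} \cdots X_{\mathbf{i}_K, \mathbf{j}_K}].$$

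Second, I would establish the counting estimate $\Card(\mathcal{G}_N)/\Card(\mathcal{T}^{p,q}_{N,(I_K,J_K)}) = 1 - O(m_N^{-1} + n_N^{-1})$. Sequentially choosing $\mathbf{i}_1, \ldots, \mathbf{i}_K$ (resp.\ $\mathbf{j}_1, \ldots, \mathbf{j}_K$), the free part of $\mathbf{i}_k$ must be picked among $\llbracket m_N\rrbracket \setminus \underline{\mathbf{i}}_k$; for $\mathcal{G}_N$, it must additionally avoid at most $(K-1)p$ row indices (coming from the other index sets already chosen), which removes only a $O(m_N^{-1})$ fraction of the available choices at each step. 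A straightforward expansion of the ratio of the resulting products of binomial coefficients yields the claim, and symmetrically for columns.

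Third, I would bound the non-generic contribution. By generalized Hölder's inequality followed by exchangeability, $\bigl|\E[X_{\mathbf{i}_1, \mathbf{j}_1} \cdots X_{\mathbf{i}_K, \mathbf{j}_K}]\bigr| \le \prod_{k=1}^K \E\bigl[|X_{\mathbf{i}_k, \mathbf{j}_k}|^K\bigr]^{1/K} = \E[|h(Y_{\llbracket p\rrbracket, \llbracket q\rrbracket})|^K]$, which is finite under the implicit moment assumption on $h$. Combined with the counting estimate $(\Card(\mathcal{T}^{p,q}_{N,(I_K,J_K)}) - \Card(\mathcal{G}_N))/\Card(\mathcal{T}^{p,q}_{N,(I_K,J_K)}) = O(m_N^{-1} + n_N^{-1})$, the non-generic part contributes at most $O(m_N^{-1} + n_N^{-1})$. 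The first summand in the display above is also $O(m_N^{-1} + n_N^{-1})$ since $|\alpha(I_K, J_K)|$ is finite, which concludes the proof.

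The main obstacle I anticipate is the bookkeeping in the counting step: when the forced sets $\underline{\mathbf{i}}_k$ themselves have overlapping structure, writing down $\Card(\mathcal{G}_N)$ exactly is cumbersome, but the $O(m_N^{-1} + n_N^{-1})$ expansion only requires a uniform upper bound on the number of forbidden indices at each selection step, which keeps the argument clean.
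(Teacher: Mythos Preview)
Your proposal is correct and follows essentially the same argument as the paper: split $\mathcal{T}^{p,q}_{N,(I_K,J_K)}$ into configurations whose pairwise overlap pattern matches the canonical one defining $\alpha(I_K,J_K)$ (your ``generic'' set $\mathcal{G}_N$, the paper's $A_N$ terms) versus the rest, show $\Card(\mathcal{G}_N)/\Card(\mathcal{T}^{p,q}_{N,(I_K,J_K)}) = 1 - O(m_N^{-1}+n_N^{-1})$ by a sequential selection count, and control the non-generic remainder uniformly via the generalized H\"older bound $|\E[X_{\mathbf{i}_1,\mathbf{j}_1}\cdots X_{\mathbf{i}_K,\mathbf{j}_K}]| \le \E[|X_{\llbracket p\rrbracket,\llbracket q\rrbracket}|^K]$. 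The only cosmetic difference is that the paper writes out explicit asymptotic expansions for both $T_N$ and $A_N$ before taking their ratio, whereas you argue directly on the ratio; your anticipated bookkeeping obstacle is exactly the one the paper works through, and your observation that a uniform $O(1)$ bound on the number of forbidden indices per step suffices is precisely what makes it go through.
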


\begin{proof}
    Let:
\begin{itemize}
    \item $\underline{p}_k := \Card(\mathbf{\underline{i}}_k)$ and $\underline{q}_k := \Card(\mathbf{\underline{j}}_k)$, for $1 \le k \le K$,
    \item $\underline{P} := \sum_{k=1}^K \underline{p}_k$ and $\underline{Q} := \sum_{k=1}^K \underline{q}_k$,
    \item $\bar{p} := \Card(\cup_{k=1}^K \mathbf{\underline{i}}_k)$ and $\bar{q} := \Card(\cup_{k=1}^K \mathbf{\underline{j}}_k)$.
\end{itemize}
The sum $T^{p,q}_N(I_K,J_K)$ defined by equation~\eqref{eq:def_t} is a sum over  $ T_N$ expectation terms where
    \begin{equation}
        T_N := \Card(\mathcal{T}^{p,q}_{N,(I_K,J_K)}) = \prod_{k=1}^K \Card( \mathcal{S}^{p,q}_{N , (\mathbf{\underline{i}}_k, \mathbf{\underline{j}}_k) }) = \prod_{k=1}^K \binom{m_N - \underline{p}_k}{p-\underline{p}_k} \binom{n_N - \underline{q}_k}{q - \underline{q}_k}
            \label{eq:lem_tn}.
    \end{equation}
These expectation terms $\E[X_{\mathbf{i}_1, \mathbf{j}_1}...X_{\mathbf{i}_K, \mathbf{j}_K}]$ only depend on the number of times elements appear in pairwise intersections between the $(\mathbf{i}_k, \mathbf{j}_k)$, $1 \le k \le K$. In particular, we have 
\begin{equation*}
    \E[X_{\mathbf{i}_1, \mathbf{j}_1}...X_{\mathbf{i}_K, \mathbf{j}_K}] = \alpha(I_K,J_K)
\end{equation*}
when all the $\mathbf{i}_k$ contains the elements of $\mathbf{\underline{i}}_k$ and all the other elements do not appear in any other $\mathbf{\underline{i}}_{k'}$ (see equation~\ref{eq:alpha}). Denote $A_N$ the number of terms of $T^{p,q}_N(I_K,J_K)$ where $\E[X_{\mathbf{i}_1, \mathbf{j}_1}...X_{\mathbf{i}_K, \mathbf{j}_K}] = \alpha(I_K,J_K)$, then
    
\begin{equation}
\begin{split}
    T^{p,q}_N(&I_K,J_K) \\
    &= \frac{A_N}{T_N} \alpha(I_K,J_K) + \frac{1}{T_N} \sum_{\substack{\mathcal{T}^{p,q}_{N,(I_K,J_K)} \\\E[X_{\mathbf{i}_1, \mathbf{j}_1}...X_{\mathbf{i}_K, \mathbf{j}_K}] \neq\alpha(I_K,J_K) }} \E[X_{\mathbf{i}_1, \mathbf{j}_1}X_{\mathbf{i}_2, \mathbf{j}_2}...X_{\mathbf{i}_K, \mathbf{j}_K}].
\end{split}
    \label{eq:lem_tnpq}
\end{equation}

    Using Jensen's inequality, H{\"o}lder's inequality, and the exchangeability of the submatrices, we have for all the expectation terms,
    \begin{equation*}
    \begin{split}
        0 \le \lvert \E[X_{\mathbf{i}_1, \mathbf{j}_1}...X_{\mathbf{i}_K, \mathbf{j}_K}]\rvert \le \prod_{k=1}^K \E[\lvert X_{\mathbf{i}_k, \mathbf{j}_k}\rvert^K]^{\frac{1}{K}} = \E[\lvert X_{\llbracket p \rrbracket, \llbracket q \rrbracket}\rvert^K].
    \end{split}
    \end{equation*}
    In particular, this holds for all $(T_N -A_N)$ terms of the remaining sum in Equation~\eqref{eq:lem_tnpq} when $\E[X_{\mathbf{i}_1, \mathbf{j}_1}...X_{\mathbf{i}_K, \mathbf{j}_K}] \neq \alpha(I_K,J_K)$, so 
    \begin{equation}
        0 \le \lvert T^{p,q}_N(I_K,J_K) - \frac{A_N}{T_N} \alpha(I_K,J_K) \rvert \le \left(1-\frac{A_N}{T_N}\right) \E[\lvert X_{\llbracket p \rrbracket, \llbracket q \rrbracket}\rvert^K].
        \label{eq:lem_tnpq_inequality}
    \end{equation}
    We need to calculate $T_N$ and $A_N$ to use this inequality and conclude the proof. We organize the rest of the proof in 3 parts.
    \begin{enumerate}
        \item We find an expression for $T_N$. Since for all $1 \le k \le K$, we have
        \begin{equation*}
            \binom{m_N - \underline{p}_k}{p-\underline{p}_k} \binom{n_N - \underline{q}_k}{q - \underline{q}_k} =  \frac{m_N^{p-\underline{p}_k} n_N^{q-\underline{q}_k}}{(p-\underline{p}_k)! (q - \underline{q}_k)!}  \left(1 +  O\left( m_N^{-1} +n_N^{-1}\right) \right),
        \end{equation*}
        then we have from Equation~\eqref{eq:lem_tn}
        \begin{equation}
            T_N = \left( \prod_{k=1}^K(p-\underline{p}_k)! (q-\underline{q}_k)!  \right)^{-1} m_N^{Kp - \underline{P}} n_N^{Kq - \underline{Q}} \left(1 +  O\left( m_N^{-1} +n_N^{-1}\right) \right).
            \label{eq:lem_exptn}
        \end{equation}
    \item We find an expression for $A_N$. The summation over $(\mathbf{i}_k, \mathbf{j}_k) \in \mathcal{S}^{p,q}_{N , (\mathbf{\underline{i}}_k, \mathbf{\underline{j}}_k) }$ is in fact a sum over the $p - \underline{p}_k$ elements of $\mathbf{i}_k$ and $q - \underline{q}_k$ elements of $\mathbf{j}_k$ that are not restricted by $\mathbf{\underline{i}}_k$ and $\mathbf{\underline{j}}_k$. 
    \begin{itemize}
        \item We can pick the first $\mathbf{i}_1$ choosing the $p - \underline{p}_1$ unrestricted indices among the $m_N - \bar{p}$ values of $\{1,...,m_N\}$ excluding $\cup_{k=1}^K \mathbf{\underline{i}}_k$. The same follows for the pick of $\mathbf{j}_1$, so there are $\binom{m_N-\bar{p}}{p-\underline{p}_1} \binom{n_N-\bar{q}}{q-\underline{q}_1}$ possible picks for $(\mathbf{i}_1, \mathbf{j}_1)$. 
        \item The pick of $\mathbf{i}_2$ consists in choosing the $p - \underline{p}_2$ unrestricted indices among the $m_N - \bar{p} - (p - \underline{p}_1)$ values of $\{1,...,m_N\}$ excluding $\cup_{k=1}^K \mathbf{\underline{i}}_k$ and the elements already taken by $\mathbf{i}_1$. We deduce that there are $\binom{m_N-\bar{p} - (p - \underline{p}_1)}{p-\underline{p}_2} \binom{n_N-\bar{q} - (q - \underline{q}_1)}{q-\underline{q}_2}$ possible picks for $(\mathbf{i}_2, \mathbf{j}_2)$.
        \item Iteratively, for all $1 \le k \le K$, we find that there are $$\binom{m_N-\bar{p} - \sum_{k'= 1}^{k-1} (p - \underline{p}_{k'})}{p-\underline{p}_k} \binom{n_N-\bar{q} - \sum_{k'= 1}^{k-1} (q - \underline{q}_{k'}) }{q-\underline{q}_k}$$ possible picks for $(\mathbf{i}_k, \mathbf{j}_k)$.
    \end{itemize} 
    We deduce that the number of possible picks for all the $(\mathbf{i}_k, \mathbf{j}_k)$, $1 \le k \le K$ so that $\E[X_{\mathbf{i}_1, \mathbf{j}_1}...X_{\mathbf{i}_K, \mathbf{j}_K}] = \alpha(I_K,J_K)$ is 
    \begin{equation*}
        A_N = \prod_{k=1}^K \binom{m_N-\bar{p}- \sum_{k'=1}^{k-1} (p-\underline{p}_{k'})}{p-\underline{p}_k} \binom{n_N-\bar{q}- \sum_{k'=1}^{k-1} (q-\underline{q}_{k'})}{q-\underline{q}_k}.
    \end{equation*}

    But we see that, for $1 \le k \le K$, we have
    \begin{equation*}
    \begin{split}
        \binom{m_N-\bar{p}- \sum_{k'=1}^{k-1} (p-\underline{p}_{k'})}{p-\underline{p}_k}&\binom{n_N-\bar{q}- \sum_{k'=1}^{k-1} (q-\underline{q}_{k'})}{q-\underline{q}_k} \\
        &= \frac{m_N^{p-\underline{p}_k} n_N^{q-\underline{q}_k}}{(p-\underline{p}_k)!(q-\underline{q}_k)!}  \left(1 + O\left(m_N^{-1} + n_N^{-1}\right)\right).
    \end{split}
    \end{equation*}
    So we have
    \begin{equation*}
        A_N = \left( \prod_{k=1}^K(p-\underline{p}_k)! (q-\underline{q}_k)!  \right)^{-1} m_N^{Kp - \underline{P}} n_N^{Kq - \underline{Q}} \left(1 +  O\left( m_N^{-1} +n_N^{-1}\right) \right).
    \end{equation*}
    \item Now with the expressions of $T_N$ and $A_N$, we can deduce that 
    \begin{equation*}
        T_N - A_N =  O\left( m_N^{Kp - \underline{P}} n_N^{Kq - \underline{Q}} (m_N^{-1} +n_N^{-1})\right),
    \end{equation*}
    so 
    \begin{equation*}
    \begin{split}
        1 - \frac{A_N}{T_N} &=  \frac{\left( \prod_{k=1}^K(p-\underline{p}_k)! (q-\underline{q}_k)!  \right) O\left( m_N^{Kp - \underline{P}} n_N^{Kq - \underline{Q}} (m_N^{-1} +n_N^{-1})\right)}{ m_N^{Kp - \underline{P}} n_N^{Kq - \underline{Q}} \left(1 +  O\left( m_N^{-1} +n_N^{-1}\right) \right)} \\
        &=  O\left( m_N^{-1} +n_N^{-1}\right).
    \end{split}
    \end{equation*}
    Therefore, we can finally conclude using Equation~\eqref{eq:lem_tnpq_inequality},
    \begin{equation*}
        \begin{split}
            T^{p,q}_N(I_K,J_K) &= \left(1+O\left( m_N^{-1} +n_N^{-1}\right)\right) \alpha(I_K,J_K) + O\left( m_N^{-1} +n_N^{-1}\right) \\
            &= \alpha(I_K,J_K) + O\left(m_N^{-1} + n_N^{-1} \right).
        \end{split}
    \end{equation*}
    \end{enumerate}
    
\end{proof}

\subsection{Proofs for the convergence of the conditional expectation estimators}

Here, we provide the proofs for Propositions~\ref{prop:cond_exp_unbiased} and~\ref{prop:cond_exp_consistent}, establishing the unbiasedness and asymptotic consistency of the conditional expectation estimators defined by~\eqref{eq:mu} and~\eqref{eq:nu}.

\begin{proof}[Proof of Proposition~\ref{prop:cond_exp_unbiased}]
The first result can be found directly by the definition of $\psi^{1,0}_{(\{i\}, \emptyset)} h$, since
    \begin{equation*}
    \begin{split}
        \E[\widehat{\mu}^{h,(i)}_N \mid \xi_i] &= \binom{m_N-1}{p-1}^{-1} \binom{n_N}{q}^{-1} \sum_{(\mathbf{i}, \mathbf{j}) \in \mathcal{S}^{p,q}_{N , (\{i\}, \emptyset) }} \E[h(Y_{\mathbf{i}, \mathbf{j}}) \mid \xi_i] \\
        &= \binom{m_N-1}{p-1}^{-1} \binom{n_N}{q}^{-1} \sum_{(\mathbf{i}, \mathbf{j}) \in \mathcal{S}^{p,q}_{N , (\{i\}, \emptyset) }} \psi^{1,0}_{(\{i\}, \emptyset)} h \\
        &= \psi^{1,0}_{(\{i\}, \emptyset)} h.
    \end{split}
    \end{equation*}
    The second result can be obtained analogously.
\end{proof}

\begin{proof}[Proof of Proposition~\ref{prop:cond_exp_consistent}]
    Let $N \in \mathbb{N}$ and $\mathcal{F}_N(Y) = \sigma((\widehat{\mu}^{h,(i)}_{K}(Y))_{K \ge N})$. Let $\Phi_N \in \mathbb{S}^{(i)}_{m_N} \times \mathbb{S}_{n_N}$ where $\mathbb{S}^{(i)}_{m_N}$ is the group of permutations $\sigma_{i}$ of $\llbracket m_N \rrbracket$ such that $\sigma_i(i) = i$. If $\Phi_N = (\sigma_{i}, \tau)$, denote $\Phi_N Y = (Y_{\sigma_{i}(k)\tau(j)})_{\substack{1 \le k \le m_N\\1 \le j \le n_N}}$. 
    
    First, we observe that $\widehat{\mu}^{h,(i)}_N(Y) = \widehat{\mu}^{h,(i)}_N(\Phi_N Y)$, so $\mathcal{F}_{N}(\Phi_N Y) = \mathcal{F}_{N}(Y)$. Therefore, by the exchangeability of $Y$, we have 
    \begin{equation*}
        Y \mid \mathcal{F}_N(Y) \overset{\mathcal{D}}{=} \Phi_N Y \mid \mathcal{F}_N(Y).
    \end{equation*}
    This assertion is true for all $N \in \mathbb{N}$ and $\Phi_N \in \mathbb{S}^{(i)}_{m_N} \times \mathbb{S}_{n_N}$. Now, note that for all $(\mathbf{i}, \mathbf{j})$ and $(\mathbf{i}', \mathbf{j}')$ elements of $\mathcal{S}^{p,q}_{N , (\{i\}, \emptyset) }$, we can always find a permutation $\Phi_N \in \mathbb{S}^{(i)}_{m_N} \times \mathbb{S}_{n_N}$ such that $h(\Phi_N Y_{\mathbf{i}, \mathbf{j}}) = h(Y_{\mathbf{i}', \mathbf{j}'})$. Thus, we have $\mathbb{E}[h(Y_{\mathbf{i}, \mathbf{j}}) \mid \mathcal{F}_{N}(Y)] = \mathbb{E}[h(Y_{\mathbf{i}', \mathbf{j}'}) \mid \mathcal{F}_{N}(Y)]$. Hence, for any $(\mathbf{i}, \mathbf{j}) \in \mathcal{S}^{p,q}_{N+1, (\{i\}, \emptyset) }$, we deduce that
    \begin{equation*}
    \begin{split}
        \mathbb{E}[\widehat{\mu}^{h,(i)}_N(Y) \mid \mathcal{F}_{N+1}(Y)] &= \mathbb{E}[h(Y_{(\mathbf{i}, \mathbf{j})}) \mid \mathcal{F}_{N+1}(Y)] \\ 
        &= \mathbb{E}[\widehat{\mu}^{h,(i)}_{N+1}(Y) \mid \mathcal{F}_{N+1}(Y)] \\
        &= \widehat{\mu}^{h,(i)}_{N+1}(Y).
    \end{split}
    \end{equation*}
    Therefore, $\widehat{\mu}^{h,(i)}_N(Y)$ is a backward martingale with respect to $\mathcal{F}_{N}(Y)$ (see Appendix~\ref{app:martingales}). By Theorem~\ref{th:martingale_convergence}, we have that $\widehat{\mu}^{h,(i)}_{N}(Y) \xrightarrow[N \rightarrow \infty]{a.s., L_1} \E[\widehat{\mu}^{h,(i)}_{1}(Y) \mid \mathcal{F}_{\infty}(Y)]$, where $\mathcal{F}_{\infty}(Y) = \bigcap_{N=1}^\infty \mathcal{F}_{N}(Y)$.

    Finally, $\mathcal{F}_{\infty}(Y) = \sigma(\xi_i)$ so Proposition~\ref{prop:cond_exp_unbiased} implies $\E[\widehat{\mu}^{h,(i)}_{1}(Y) \mid \mathcal{F}_{\infty}(Y)] = \psi^{1,0}_{(\{i\}, \emptyset)} h(Y)$ and thus, $\widehat{\mu}^{h,(i)}_{N}(Y) \xrightarrow[N \rightarrow \infty]{a.s., L_1} \psi^{1,0}_{(\{i\}, \emptyset)} h(Y)$.
\end{proof}

\subsection{Results for the convergence of the variance estimators and proofs}

In this section, we state and prove Propositions~\ref{prop:est_var_unbias} and~\ref{prop:est_var_variance} ensuring that the variance estimators defined by~\eqref{eq:est_v10} and~\eqref{eq:est_v01} are asymptotically unbiased and have vanishing variances.

\begin{proposition}
    We have $\E[\widehat{v}^{h;1,0}_N] = v^{1,0}_h + O\left(N^{-1}\right)$ and $\E[\widehat{v}^{h;0,1}_N] = v^{0,1}_h + O\left(N^{-1}\right)$.
    As a consequence, $\widehat{v}^{h;1,0}_N$ and $\widehat{v}^{h;0,1}_N$ are asymptotically unbiased estimators for $v^{1,0}_h$ and $v^{0,1}_h$.
    \label{prop:est_var_unbias}
\end{proposition}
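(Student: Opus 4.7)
The plan is to compute $\E[\widehat{v}^{h;1,0}_N]$ by expanding the square in the definition~\eqref{eq:est_v10} and then invoking Lemma~\ref{lem:expectation_combi}. First, by row-column exchangeability of $Y$, the distribution of $\widehat{\mu}^{h,(i)}_N$ depends on $i$ only through the choice of a single distinguished row, and jointly $(\widehat{\mu}^{h,(i_1)}_N,\widehat{\mu}^{h,(i_2)}_N)$ has the same distribution for every unordered pair $i_1\neq i_2$. Consequently,
\begin{equation*}
   \E[\widehat{v}^{h;1,0}_N] = \tfrac{1}{2}\E[(\widehat{\mu}^{h,(1)}_N - \widehat{\mu}^{h,(2)}_N)^2] = \E[(\widehat{\mu}^{h,(1)}_N)^2] - \E[\widehat{\mu}^{h,(1)}_N \widehat{\mu}^{h,(2)}_N],
\end{equation*}
since $\widehat{\mu}^{h,(1)}_N$ and $\widehat{\mu}^{h,(2)}_N$ have the same second moment.

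The second step is to recognize each of the two expectations above as an average of the form $T^{p,q}_N(I_2, J_2)$ introduced in~\eqref{eq:def_t}. Unfolding the definition~\eqref{eq:mu} of $\widehat{\mu}^{h,(i)}_N$ and moving the expectation inside the double sum gives
\begin{equation*}
   \E[(\widehat{\mu}^{h,(1)}_N)^2] = T^{p,q}_N\!\bigl((\{1\},\{1\}),(\emptyset,\emptyset)\bigr),
   \qquad
   \E[\widehat{\mu}^{h,(1)}_N\widehat{\mu}^{h,(2)}_N] = T^{p,q}_N\!\bigl((\{1\},\{2\}),(\emptyset,\emptyset)\bigr).
\end{equation*}
Applying Lemma~\ref{lem:expectation_combi} rewrites each of these as $\alpha(I_2,J_2) + O(m_N^{-1}+n_N^{-1}) = \alpha(I_2,J_2) + O(N^{-1})$, so the remaining task is to identify the two limiting values $\alpha$.

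This is the step where the AHK representation does the work. For $I_2=(\{1\},\{1\})$, $J_2=(\emptyset,\emptyset)$, the two submatrices $Y_{\mathbf{\bar{\underline{i}}}_1, \mathbf{\bar{\underline{j}}}_1}$ and $Y_{\mathbf{\bar{\underline{i}}}_2, \mathbf{\bar{\underline{j}}}_2}$ share only the row index $1$ and hence, via~\eqref{eq:ahk_function}, only the latent variable $\xi_1$; all other $\xi$, $\eta$, and $\zeta$ variables appearing in the two kernels are disjoint and mutually independent. Conditioning on $\xi_1$ and using this conditional independence together with the definition of $\psi^{1,0}_{(\{1\},\emptyset)} h$ gives $\alpha = \E\bigl[(\psi^{1,0}_{(\{1\},\emptyset)} h)^2\bigr]$. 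For $I_2=(\{1\},\{2\})$, $J_2=(\emptyset,\emptyset)$, the two submatrices share no row or column, so by dissociatedness of $Y$ they are independent and $\alpha = \bigl(\E[h(Y_{\llbracket p\rrbracket,\llbracket q\rrbracket})]\bigr)^2 = \bigl(\E[\psi^{1,0}_{(\{1\},\emptyset)} h]\bigr)^2$ by the tower property.

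Combining the two contributions yields
\begin{equation*}
  \E[\widehat{v}^{h;1,0}_N] = \E\bigl[(\psi^{1,0}_{(\{1\},\emptyset)} h)^2\bigr] - \bigl(\E[\psi^{1,0}_{(\{1\},\emptyset)} h]\bigr)^2 + O(N^{-1}) = v^{1,0}_h + O(N^{-1}).
\end{equation*}
The statement for $\widehat{v}^{h;0,1}_N$ is obtained by the same argument after swapping the roles of rows and columns. No step is really difficult once Lemma~\ref{lem:expectation_combi} is in hand; the one point requiring care is the identification of the two $\alpha$ values, which relies on coupling the conditional-independence structure given $\xi_1$ in the first case with dissociatedness in the second.
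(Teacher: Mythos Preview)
Your proof is correct and follows essentially the same approach as the paper: reduce to $\E[(\widehat{\mu}^{h,(1)}_N)^2]-\E[\widehat{\mu}^{h,(1)}_N\widehat{\mu}^{h,(2)}_N]$, identify each term as a $T^{p,q}_N(I_2,J_2)$, apply Lemma~\ref{lem:expectation_combi}, and compute the two $\alpha$ values. Your justification of the $\alpha$ values via conditional independence given $\xi_1$ and dissociatedness is slightly more explicit than the paper's direct computation, but the argument is the same.
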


\begin{proof}
    In this proof, for the estimator defined by equation~\eqref{eq:mu}, we write $\widehat{\mu}^{(i)}_N$ instead of $\widehat{\mu}^{h,(i)}_N$ to simplify the notation without ambiguity. Notice that
    \begin{equation}
            \E[\widehat{v}^{h;1,0}_N] = \frac{1}{2} \E[(\widehat{\mu}^{(1)}_N - \widehat{\mu}^{(2)}_N)^2] = \E[(\widehat{\mu}^{(1)}_N)^2] - \E[\widehat{\mu}^{(1)}_N\widehat{\mu}^{(2)}_N].
        \label{eq:expect_variance}
    \end{equation}

    \begin{itemize}
        \item First, we calculate:
    \begin{equation*}
        \mathbb{E}[\widehat{\mu}^{(1)}_N(Y)^2] = T^{p,q}_N(I_2,J_2) 
    \end{equation*}
  where $I_2 = (\{1\}, \{1\})$ and $J_2 = (\emptyset, \emptyset)$. Applying Lemma~\ref{lem:expectation_combi}, with $\underline{P} = \Card(\{1\}) + \Card(\{1\}) = 2$, $\underline{Q} = \Card(\emptyset) + \Card(\emptyset) = 0$ and
  \begin{equation*}
  \begin{split}
      \alpha(I_2, J_2) &= \E[h(Y_{(1,...,p;1,...,q)})h(Y_{(1,p+1,...,2p-1;q+1,...,2q)})] \\
      &= \E[\E[h(Y_{(1,...,p;1,...,q)})h(Y_{(1,p+1,...,2p-1;q+1,...,2q)}) \mid \xi_1]] \\
      &= \E[\E[h(Y_{\llbracket p \rrbracket, \llbracket q \rrbracket}) \mid \xi_1]^2] \\
      &= \E[\psi^{1,0}_{(\{1\}, \emptyset)} h^2],
  \end{split}
  \end{equation*}
 we find
    \begin{equation}
    \begin{split}
        \mathbb{E}[(\widehat{\mu}^{(1)}_N)^2] =  \E[\psi^{1,0}_{(\{1\}, \emptyset)} h^2] + O\left(N^{-1}\right).
    \end{split}
    \label{eq:mu_squared}
    \end{equation}

    \item Next, we calculate:
    \begin{equation*}
        \mathbb{E}[\widehat{\mu}^{(1)}_N \widehat{\mu}^{(2)}_N] = T^{p,q}_N(I_2',J_2')
    \end{equation*}
  where $I_2' = (\{1\}, \{2\})$ and $J_2' = (\emptyset, \emptyset)$. Applying Lemma~\ref{lem:expectation_combi} with
  \begin{equation*}
  \begin{split}
      \alpha(I_2', J_2') &= \E[h(Y_{(1,3,...,p+1;1,...,q)})h(Y_{(2,p+2,...,2p;q+1,...,2q)})] \\
      &= \E[h(Y_{\llbracket p \rrbracket, \llbracket q \rrbracket})]^2 \\
      &= \E[\psi^{1,0}_{(\{1\}, \emptyset)} h]^2,
  \end{split}
  \end{equation*}
we find
    \begin{equation}
    \begin{split}
        \mathbb{E}[\widehat{\mu}^{(1)}_N \widehat{\mu}^{(2)}_N] &= \E[\psi^{1,0}_{(\{1\}, \emptyset)} h]^2 + O\left(N^{-1}\right).
    \end{split}
    \label{eq:mu_cross}
    \end{equation}
    \end{itemize}

    Finally, we can combine Equations~\eqref{eq:expect_variance},~\eqref{eq:mu_squared} and~\eqref{eq:mu_cross} to obtain
    \begin{equation*}
        \E[\widehat{v}^{h;1,0}_N] = \E[\psi^{1,0}_{(\{1\}, \emptyset)} h^2] - \E[\psi^{1,0}_{(\{1\}, \emptyset)} h]^2 + O\left(N^{-1}\right) = v^{1,0}_h + O\left(N^{-1}\right),
    \end{equation*}
    which proves the result.
\end{proof}

\begin{proposition}
    We have $\V[\widehat{v}^{h;1,0}_N] = O\left(N^{-1}\right)$ and $\V[\widehat{v}^{h;0,1}_N] = O\left(N^{-1}\right)$.
    \label{prop:est_var_variance}
\end{proposition}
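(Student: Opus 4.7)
The plan is to expand $\V[\widehat{v}^{h;1,0}_N]$ into a double sum of covariances, classify the terms by an overlap parameter, and apply Lemma~\ref{lem:expectation_combi} to each class. Writing $Z_{\{i_1,i_2\}} := (\widehat{\mu}^{h,(i_1)}_N - \widehat{\mu}^{h,(i_2)}_N)^2/2$, we have
\begin{equation*}
  \V[\widehat{v}^{h;1,0}_N] = \binom{m_N}{2}^{-2} \sum_{\{i_1,i_2\},\,\{i_3,i_4\}} \Cov\left(Z_{\{i_1,i_2\}},\, Z_{\{i_3,i_4\}}\right).
\end{equation*}
By exchangeability of $Y$, this covariance depends only on $r := \Card(\{i_1,i_2\} \cap \{i_3,i_4\}) \in \{0,1,2\}$; denote its common value by $\gamma_r$ and the number of unordered pairs of pairs at overlap $r$ by $n_r$. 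One checks that $n_2 = \binom{m_N}{2}$, $n_1 = 2\binom{m_N}{2}(m_N-2)$ and $n_0 = \binom{m_N}{2}\binom{m_N-2}{2}$, of respective orders $\Theta(N^2), \Theta(N^3), \Theta(N^4)$. Since the prefactor $\binom{m_N}{2}^{-2}$ is $\Theta(N^{-4})$, it suffices to establish $\gamma_2 = O(1)$, $\gamma_1 = O(1)$ and $\gamma_0 = O(N^{-1})$.

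The first two bounds follow from Cauchy--Schwarz together with a uniform bound on $\V[Z_{\{i_1,i_2\}}]$, which in turn reduces, via Jensen's inequality applied to the averages defining $\widehat{\mu}^{h,(i)}_N$, to the finiteness of $\E[h(Y_{\llbracket p \rrbracket,\llbracket q \rrbracket})^4]$, a moment assumption we take implicit here. They contribute $O(N^{-2})$ and $O(N^{-1})$ respectively to the variance.

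The main obstacle is the bound $\gamma_0 = O(N^{-1})$. By exchangeability, one may fix $\{i_1,i_2,i_3,i_4\} = \{1,2,3,4\}$. Expanding both squares and using that each $\widehat{\mu}^{h,(i)}_N$ is itself an average of kernel evaluations, both $\E[Z_{\{1,2\}} Z_{\{3,4\}}]$ and $\E[Z_{\{1,2\}}]\E[Z_{\{3,4\}}]$ become linear combinations of quantities $T^{p,q}_N(I_K, J_K)$ with $K \in \{2,4\}$ and all $J_K$ empty, and Lemma~\ref{lem:expectation_combi} gives $T^{p,q}_N(I_K,J_K) = \alpha(I_K,J_K) + O(N^{-1})$. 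The key point is that when the four forced row indices are all distinct, the AHK representation of $Y$ implies that, conditional on $\xi_1, \xi_2, \xi_3, \xi_4$, the four kernel evaluations entering $\alpha(I_4,J_4)$ split into two conditionally independent blocks indexed by $\{1,2\}$ and $\{3,4\}$: indeed the remaining row-specific, column-specific and entry-specific AHK variables in these four kernels are all disjoint in the limit. Consequently the limiting $\alpha(I_4,J_4)$ factorizes over the partition $\{1,2\} \sqcup \{3,4\}$, which is exactly the factorization structure present in $\E[Z_{\{1,2\}}]\E[Z_{\{3,4\}}]$. The $O(1)$ leading contributions therefore cancel, leaving only the $O(N^{-1})$ remainders from Lemma~\ref{lem:expectation_combi}, so $\gamma_0 = O(N^{-1})$. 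Summing the three contributions yields $\V[\widehat{v}^{h;1,0}_N] = O(N^{-1})$, and the bound for $\V[\widehat{v}^{h;0,1}_N]$ follows by an identical argument exchanging the roles of rows and columns.
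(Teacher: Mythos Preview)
Your proof is correct and follows essentially the same route as the paper's: both isolate the overlap-zero contribution (the paper's step $\E[(\widehat v^{h;1,0}_N)^2] = \tfrac14\E[(\widehat\mu^{(1)}_N-\widehat\mu^{(2)}_N)^2(\widehat\mu^{(3)}_N-\widehat\mu^{(4)}_N)^2] + O(N^{-1})$ is exactly your $r=1,2$ bound), then expand into products of four $\widehat\mu$'s and apply Lemma~\ref{lem:expectation_combi} with $K=4$. The only presentational difference is that the paper computes the three relevant $\alpha(I_4,J_4)$ explicitly and recognizes their combination as $(v^{1,0}_h)^2$, whereas you argue the factorization of each $\alpha(I_4,J_4)$ over the partition $\{1,2\}\sqcup\{3,4\}$ directly; both arguments (and both proofs) implicitly require the fourth moment $\E[h^4]<\infty$ that you rightly flag, since Lemma~\ref{lem:expectation_combi} with $K=4$ needs it.
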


\begin{proof}

    In this proof, we write $\widehat{\mu}^{(i)}_N$ instead of $\widehat{\mu}^{h,(i)}_N$ to simplify the notation without ambiguity. Notice that
    \begin{align}
        \E[(\widehat{v}^{h;1,0}_N)^2] &= \binom{m_N}{2}^{-2} \sum_{1 \le i_1 < i_2 \le m_N} \sum_{1 \le i'_1 < i'_2 \le m_N} \E\left[\frac{(\widehat{\mu}^{(i_1)}_N - \widehat{\mu}^{(i_2)}_N)^2(\widehat{\mu}^{(i'_1)}_N - \widehat{\mu}^{(i'_2)}_N)^2}{4}\right] \nonumber \\
        &= \frac{1}{4} \E[(\widehat{\mu}^{(1)}_N - \widehat{\mu}^{(2)}_N)^2(\widehat{\mu}^{(3)}_N - \widehat{\mu}^{(4)}_N)^2] + O\left(N^{-1}\right) \nonumber \\
        &= \mathbb{E}[(\widehat{\mu}^{(1)}_N)^2(\widehat{\mu}^{(2)}_N)^2] - 2 \mathbb{E}[(\widehat{\mu}^{(1)}_N)^2 \widehat{\mu}^{(2)}_N \widehat{\mu}^{(3)}_N] + \mathbb{E}[\widehat{\mu}^{(1)}_N\widehat{\mu}^{(2)}_N \widehat{\mu}^{(3)}_N \widehat{\mu}^{(4)}_N] \nonumber \\
        &\quad + O\left(N^{-1}\right). 
    \label{eq:squared_v_expectation}
    \end{align}
    Now, we calculate each of the three expectation terms in this equation.

    \begin{itemize}
        \item First, we calculate:
    \begin{equation*}
            \mathbb{E}[(\widehat{\mu}^{(1)}_N)^2(\widehat{\mu}^{(2)}_N)^2] = T^{p,q}_N(I_4, J_4),
    \end{equation*}
where $I_4 = (\{1\},\{1\},\{2\},\{2\})$ and $J_4 = (\emptyset,\emptyset,\emptyset,\emptyset)$. Applying Lemma~\ref{lem:expectation_combi} with
\begin{equation*}
\begin{split}
    \alpha&(I_4, J_4)\\
    &= \E[h(Y_{(1,3,...,p+1;1,...,q)}) h(Y_{(1,p+2,...,2p;q+1,...,2q)}) \\
    &\quad h(Y_{(2,2p+1,...,3p-1;2q+1,...,3q)}) h(Y_{(2,3p,...,4p-2;3q+1,...,4q)})] \\
    &= \E[h(Y_{(1,...,p;1,...,q)}) h(Y_{(1,p+1,...,2p-1;q+1,...,2q)})]^2 \\
    &= \E[\E[h(Y_{(1,...,p;1,...,q)})h(Y_{(1,p+1,...,2p-1;q+1,...,2q)}) \mid \xi_1]]^2 \\
    &= \E[\E[h(Y_{\llbracket p \rrbracket, \llbracket q \rrbracket}) \mid \xi_1]^2]^2 \\
    &= \E[\psi^{1,0}_{(\{1\}, \emptyset)} h^2]^2,
\end{split}
\end{equation*}
we find
    \begin{equation}
        \begin{split}
            \mathbb{E}[(\widehat{\mu}^{(1)}_N)^2(\widehat{\mu}^{(2)}_N)^2] &= \E[\psi^{1,0}_{(\{1\}, \emptyset)} h^2]^2 + O\left(N^{-1}\right).
        \end{split}
        \label{eq:mu_4}
    \end{equation}

\item Next, we calculate:
\begin{equation*}
    \mathbb{E}[(\widehat{\mu}^{(1)}_N)^2 \widehat{\mu}^{(2)}_N \widehat{\mu}^{(3)}_N] = T^{p,q}_N(I_4', J_4'),
\end{equation*}
where $I_4' = (\{1\},\{1\},\{2\},\{3\})$ and $J_4' = (\emptyset,\emptyset,\emptyset,\emptyset)$. Applying Lemma~\ref{lem:expectation_combi} with
\begin{equation*}
\begin{split}
    \alpha&(I_4', J_4') \\
    &= \E[h(Y_{(1,4,...,p+2;1,...,q)}) h(Y_{(1,p+3,...,2p+1;q+1,...,2q)}) \\
    &\quad h(Y_{(2,2p+2,...,3p;2q+1,...,3q)}) h(Y_{(3,3p+1,...,4p-1;3q+1,...,4q)})] \\
    &= \E[h(Y_{(1,...,p;1,...,q)}) h(Y_{(1,p+1,...,2p-1;q+1,...,2q)})]^2 \E[h(Y_{(1,...,p;1,...,q)})]^2 \\
    &= \E[\E[h(Y_{(1,...,p;1,...,q)})h(Y_{(1,p+1,...,2p-1;q+1,...,2q)}) \mid \xi_1]] \E[\psi^{1,0}_{(\{1\}, \emptyset)} h(Y)]^2\\
    &= \E[\E[h(Y_{\llbracket p \rrbracket, \llbracket q \rrbracket}) \mid \xi_1]^2] \E[\psi^{1,0}_{(\{1\}, \emptyset)} h]^2\\
    &= \E[\psi^{1,0}_{(\{1\}, \emptyset)} h(Y)^2] \E[\psi^{1,0}_{(\{1\}, \emptyset)} h]^2,
\end{split}
\end{equation*}
we find
    \begin{equation}
        \begin{split}
            \mathbb{E}[(\widehat{\mu}^{(1)}_N(Y))^2 \widehat{\mu}^{(2)}_N(Y) \widehat{\mu}^{(3)}_N(Y)] &= \E[\psi^{1,0}_{(\{1\}, \emptyset)} h(Y)^2] \E[\psi^{1,0}_{(\{1\}, \emptyset)} h(Y)]^2 \\
            &\quad + O\left(N^{-1}\right).
        \end{split}
        \label{eq:mu_211}
    \end{equation}

\item Now, we calculate:
\begin{equation*}
    \mathbb{E}[\widehat{\mu}^{(1)}_N(Y) \widehat{\mu}^{(2)}_N(Y) \widehat{\mu}^{(3)}_N(Y) \widehat{\mu}^{(4)}_N(Y)] = T^{p,q}_N(I_4'', J_4''),
\end{equation*}
where $I_4' = (\{1\},\{2\},\{3\},\{4\})$ and $J_4' = (\emptyset,\emptyset,\emptyset,\emptyset)$. Applying Lemma~\ref{lem:expectation_combi} with
\begin{equation*}
\begin{split}
    \alpha&(I_4'', J_4'') \\
    &= \E[h(Y_{(1,5,...,p+3;1,...,q)}) h(Y_{(2,p+4,...,2p+2;q+1,...,2q)}) \\
    &\quad h(Y_{(3,2p+3,...,3p+1;2q+1,...,3q)}) h(Y_{(4,3p+2,...,4p;3q+1,...,4q)})] \\
    &= \E[h(Y_{\llbracket p \rrbracket, \llbracket q \rrbracket})]^4 \\
    &= \E[\psi^{1,0}_{(\{1\}, \emptyset)} h(Y)]^4,
\end{split}
\end{equation*}
we find
    \begin{equation}
        \begin{split}
            \mathbb{E}[\widehat{\mu}^{(1)}_N(Y) \widehat{\mu}^{(2)}_N(Y) \widehat{\mu}^{(3)}_N(Y) \widehat{\mu}^{(4)}_N(Y)] &= \E[\psi^{1,0}_{(\{1\}, \emptyset)} h(Y)]^4 + O\left(N^{-1}\right).
        \end{split}
        \label{eq:mu_1111}
    \end{equation}
    
    \end{itemize}

    Finally, injecting the calculated expressions~\eqref{eq:mu_4},~\eqref{eq:mu_211} and~\eqref{eq:mu_1111} in~\eqref{eq:squared_v_expectation}, we obtain
    \begin{equation*}
    \begin{split}
        \E[(\widehat{v}^{h;1,0}_N)^2] &= \E[\psi^{1,0}_{(\{1\}, \emptyset)} h(Y)^2]^2 - 2\E[\psi^{1,0}_{(\{1\}, \emptyset)} h(Y)^2] \E[\psi^{1,0}_{(\{1\}, \emptyset)} h(Y)]^2 \\
        &+ \E[\psi^{1,0}_{(\{1\}, \emptyset)} h(Y)]^4 + O\left(N^{-1}\right) \\
        &= \V[\psi^{1,0}_{(\{1\}, \emptyset)} h(Y)^2]^2 + O\left(N^{-1}\right) \\
        &= (v^{1,0}_h)^2 + O\left(N^{-1}\right) \\
        &= \E[\widehat{v}^{h;1,0}_N]^2 + O\left(N^{-1}\right),
    \end{split}
    \end{equation*}
    where we have applied Proposition~\ref{prop:est_var_unbias} in the last step. This proves that $\V[\widehat{v}^{h;1,0}_N] = O\left(N^{-1}\right)$, concluding the proof.
\end{proof}

\section{Proofs of the results presented in Section~\ref{sec:examples}}
\label{app:examples}

\begin{lemma}
    Let $Y$ be a matrix sampled from a Poisson $W$-graph model. Let $\bar{w}$, $f$ and $g$ be defined as in Section~\ref{sub:example_graphon}. For the kernel functions defined in Table~\ref{tab:kernels_1}, we have
    \begin{itemize}
        \item $\E[h_{A,1}(Y_{(i_1, i_2; j_1, j_2)})] = \lambda^3 \iint \bar{w}(\xi,\eta)^2 d\xi d\eta$,
        \item $\E[h_{A,2}(Y_{(i_1, i_2; j_1, j_2)})] = \lambda^3 \iint \bar{w}(\xi,\eta) f(\xi)g(\eta) d\xi d\eta$,
        \item $\E[h_B(Y_{(i_1; j_1, j_2)})] = \lambda^2 \int f(\xi)^2 d\xi$,
        \item $\E[h_C(Y_{(i_1, i_2; j_1)})] = \lambda^2 \int g(\eta)^2 d\eta$,
        \item $\E[h_D(Y_{(i_1; j_1)})] = \lambda$.
    \end{itemize}
    \label{lem:graphon_kernels}
\end{lemma}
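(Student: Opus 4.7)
The plan is to compute each expectation by conditioning on the row and column latent variables $(\xi_i)$ and $(\eta_j)$, exploiting the conditional independence of the entries $Y_{ij}$ under the AHK representation. Once the conditional expectations are expressed as integrals against $\bar{w}$, the claims follow from Fubini's theorem together with the definitions $f(\xi) = \int \bar{w}(\xi,\eta)\,d\eta$, $g(\eta) = \int \bar{w}(\xi,\eta)\,d\xi$, and the normalization $\iint \bar{w} = 1$.

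First I would record the two Poisson moment identities that are used throughout: for any $(i,j)$,
\begin{equation*}
    \E[Y_{ij} \mid \xi_i, \eta_j] = \lambda \bar{w}(\xi_i,\eta_j), \qquad \E[Y_{ij}(Y_{ij}-1) \mid \xi_i, \eta_j] = \lambda^2 \bar{w}(\xi_i,\eta_j)^2,
\end{equation*}
and that given all row and column latents, distinct entries $Y_{ij}$ are conditionally independent. This is the only probabilistic input; everything else is integration.

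Next, for each kernel I would apply the tower rule conditionally on the relevant latents and factor the conditional expectation into a product. For instance, for $h_{A,2}$ the indices $i_1,i_2$ and $j_1,j_2$ are distinct, so
\begin{equation*}
    \E[Y_{i_1 j_1} Y_{i_1 j_2} Y_{i_2 j_2} \mid \xi_{i_1},\xi_{i_2},\eta_{j_1},\eta_{j_2}] = \lambda^3 \bar{w}(\xi_{i_1},\eta_{j_1})\bar{w}(\xi_{i_1},\eta_{j_2})\bar{w}(\xi_{i_2},\eta_{j_2}),
\end{equation*}
and integrating out $\eta_{j_1}$ first, then $\xi_{i_2}$ (using $\int \bar{w}(\xi_{i_1},\eta_{j_1})\,d\eta_{j_1} = f(\xi_{i_1})$ and $\int \bar{w}(\xi_{i_2},\eta_{j_2})\,d\xi_{i_2} = g(\eta_{j_2})$) yields $\lambda^3 \iint \bar{w}(\xi,\eta) f(\xi) g(\eta)\,d\xi\,d\eta$. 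The identity $\E[h_{A,1}] = \lambda^3 \iint \bar{w}^2$ uses the second Poisson moment for the $(i_1,j_1)$ entry and then the factorization $\iint \bar{w}^2 \cdot \iint \bar{w} = \iint \bar{w}^2$. The formulas for $h_B$, $h_C$ and $h_D$ follow by exactly the same recipe, using the definitions of $f$, $g$ and $\iint \bar{w} = 1$.

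There is no genuine obstacle here: the argument is purely a bookkeeping computation once conditional independence and the Poisson moments are in hand. The only point requiring a little care is making sure the distinct-index assumption allows conditional independence of the three (or two) Poisson factors before taking expectations, which is immediate from the $\zeta_{ij}$ being i.i.d.\ across $(i,j)$ in the AHK representation of the Poisson $W$-graph.
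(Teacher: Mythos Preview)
Your proposal is correct and follows essentially the same approach as the paper: condition on the latent row and column variables, use conditional independence of the entries together with the Poisson moment identities $\E[Y_{ij}\mid\xi_i,\eta_j]=\lambda\bar{w}(\xi_i,\eta_j)$ and $\E[Y_{ij}(Y_{ij}-1)\mid\xi_i,\eta_j]=\lambda^2\bar{w}(\xi_i,\eta_j)^2$, and then integrate using the definitions of $f$, $g$ and the normalization of $\bar{w}$. The paper's proof is the same computation written out kernel by kernel.
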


\begin{proof}
    The result for $h_D$ is straightforward. For the other kernel functions:
    \begin{equation*}
        \begin{split}
            \E[h_{A,1}(Y_{(i_1, i_2; j_1, j_2)})] &= \E[\E[Y_{i_1j_1} (Y_{i_1j_1} - 1)  Y_{i_2j_2} \mid \boldsymbol{\xi}, \boldsymbol{\eta}]] \\
            &= \E[\E[Y_{i_1j_1} (Y_{i_1j_1} - 1) \mid \boldsymbol{\xi}, \boldsymbol{\eta}]\E[Y_{i_2j_2} \mid \boldsymbol{\xi}, \boldsymbol{\eta}]] \\
            &= \E[\lambda^2 \bar{w}(\xi_{i_1}, \eta_{j_1})^2 \times \lambda] \\
            &= \lambda^3 \iint \bar{w}(\xi,\eta)^2 d\xi d\eta.
        \end{split}
    \end{equation*}
    
    \begin{equation*}
        \begin{split}
            \E[&h_{A,2}(Y_{(i_1, i_2; j_1, j_2)})] \\
            &= \E[\E[Y_{i_1j_1} Y_{i_1j_2} Y_{i_2j_2} \mid \boldsymbol{\xi}, \boldsymbol{\eta}]] \\
            &= \E[\E[Y_{i_1j_1} \mid \boldsymbol{\xi}, \boldsymbol{\eta}]\E[ Y_{i_1j_2} \mid \boldsymbol{\xi}, \boldsymbol{\eta}]\E[Y_{i_2j_2} \mid \boldsymbol{\xi}, \boldsymbol{\eta}]] \\
            &= \E[\lambda \bar{w}(\xi_{i_1}, \eta_{j_1}) \times \lambda \bar{w}(\xi_{i_1}, \eta_{j_2}) \times \lambda \bar{w}(\xi_{i_2}, \eta_{j_2})] \\
            &= \lambda^3 \iint \left[\bar{w}(\xi_{i_1}, \eta_{j_2}) \left(\int \bar{w}(\xi_{i_1}, \eta_{j_1}) d\eta_{j_1} \right) \left(\int \bar{w}(\xi_{i_2}, \eta_{j_2}) d\xi_{i_2} \right)\right] d\xi_{i_1} d\eta_{j_2} \\
            &= \lambda^3 \iint \bar{w}(\xi,\eta) f(\xi)g(\eta) d\xi d\eta.
        \end{split}
    \end{equation*}
    
    \begin{equation*}
        \begin{split}
            \E[h_B(Y_{(i_1; j_1, j_2)})] &= \E[\E[Y_{i_1j_1} Y_{i_1j_2} \mid \boldsymbol{\xi}, \boldsymbol{\eta}]] \\
            &= \E[\E[Y_{i_1j_1} \mid \boldsymbol{\xi}, \boldsymbol{\eta}]\E[ Y_{i_1j_2} \mid \boldsymbol{\xi}, \boldsymbol{\eta}]] \\
            &= \E[\lambda \bar{w}(\xi_{i_1}, \eta_{j_1}) \times \lambda \bar{w}(\xi_{i_1}, \eta_{j_2})] \\
            &= \lambda^2 \int \left[ \left(\int \bar{w}(\xi_{i_1}, \eta_{j_1}) d\eta_{j_1} \right) \left(\int \bar{w}(\xi_{i_1}, \eta_{j_2}) d\eta_{j_2} \right) \right] d\xi_{i_1}  \\
            &= \lambda^2 \int f(\xi)^2 d\xi.
        \end{split}
    \end{equation*}
    
    \begin{equation*}
        \begin{split}
            \E[h_C(Y_{(i_1, i_2; j_1)})] &= \E[\E[Y_{i_1j_1} Y_{i_2j_1} \mid \boldsymbol{\xi}, \boldsymbol{\eta}]] \\
            &= \E[\E[Y_{i_1j_1} \mid \boldsymbol{\xi}, \boldsymbol{\eta}]\E[ Y_{i_2j_1} \mid \boldsymbol{\xi}, \boldsymbol{\eta}]] \\
            &= \E[\lambda \bar{w}(\xi_{i_1}, \eta_{j_1}) \times \lambda \bar{w}(\xi_{i_2}, \eta_{j_1})] \\
            &= \lambda^2 \int \left[ \left(\int \bar{w}(\xi_{i_1}, \eta_{j_1}) d\xi_{i_1} \right) \left(\int \bar{w}(\xi_{i_2}, \eta_{j_1}) d\xi_{i_2} \right) \right]  d\eta_{j_1} \\
            &= \lambda^2 \int g(\eta)^2 d\eta.
        \end{split}
    \end{equation*}
\end{proof}

\begin{lemma}
    Let $Y$ be a matrix sampled from a $W$-graph model. Let $h_1$ and $h_2$ be the kernel functions defined as in Section~\ref{sub:example_f2}. We have
    \begin{itemize}
        \item $\E[h_1(Y_{(i_1; j_1, j_2)})] = \lambda^2 \int f(\xi)^2 d\xi = \lambda^2 F_2$,
        \item $\E[h_2(Y_{(i_1, i_2; j_1, j_2)})] = \lambda^2$.
    \end{itemize}
    \label{lem:f2_kernels}
\end{lemma}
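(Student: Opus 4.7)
The plan is to mirror the computation carried out for the kernels in Table~\ref{tab:kernels_1} in the proof of Lemma~\ref{lem:graphon_kernels}, exploiting the fact that under the (Poisson or Bernoulli) $W$-graph model the entries $Y_{ij}$ are conditionally independent given the AHK latent variables $(\boldsymbol{\xi}, \boldsymbol{\eta})$, with common conditional mean $\E[Y_{ij} \mid \boldsymbol{\xi}, \boldsymbol{\eta}] = \lambda \bar{w}(\xi_i, \eta_j)$. After using this conditional independence, the result will follow from Fubini applied to the marginals $f = \int \bar{w}(\cdot,\eta)\,d\eta$ and $g = \int \bar{w}(\xi,\cdot)\,d\xi$, together with the normalization $\iint \bar{w} = 1$.

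For $h_1$, I would first use the tower rule and then the conditional independence between $Y_{i_1 j_1}$ and $Y_{i_1 j_2}$ (they share only the row index $i_1$, so they are independent given $\xi_{i_1}, \eta_{j_1}, \eta_{j_2}$) to write
\begin{equation*}
\E[h_1(Y_{(i_1;j_1,j_2)})] = \E\!\left[\lambda \bar{w}(\xi_{i_1},\eta_{j_1})\,\lambda \bar{w}(\xi_{i_1},\eta_{j_2})\right].
\end{equation*}
Integrating out $\eta_{j_1}$ and $\eta_{j_2}$ yields $\lambda^2 \int f(\xi_{i_1})^2\,d\xi_{i_1} = \lambda^2 F_2$.

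For $h_2$, the same conditioning argument applied to each of the two disjoint pairs $(i_1,j_1),(i_2,j_2)$ and $(i_1,j_2),(i_2,j_1)$ gives
\begin{equation*}
\E\!\left[Y_{i_1 j_1} Y_{i_2 j_2}\right] = \lambda^2 \iiiint \bar{w}(\xi_1,\eta_1)\,\bar{w}(\xi_2,\eta_2)\,d\xi_1\,d\eta_1\,d\xi_2\,d\eta_2 = \lambda^2,
\end{equation*}
using $\iint \bar{w} = 1$. The symmetric term contributes identically, so averaging the two gives $\E[h_2] = \lambda^2$.

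There is no real obstacle here: the only thing to be careful about is the bookkeeping of which AHK variables are shared between the entries appearing in the product, which dictates which marginal of $\bar{w}$ shows up after integration. The computation is essentially the same pattern as for $h_B$ in Lemma~\ref{lem:graphon_kernels}, with $h_2$ being the even simpler ``fully disjoint'' case.
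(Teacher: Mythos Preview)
Your proposal is correct and follows essentially the same approach as the paper: the paper simply refers back to the $h_B$ computation in Lemma~\ref{lem:graphon_kernels} for $h_1$, and for $h_2$ conditions on $(\boldsymbol{\xi},\boldsymbol{\eta})$, uses conditional independence of the disjoint entries, and the normalization $\iint \bar w = 1$. Your version is slightly more explicit in treating both terms of the symmetrized kernel $h_2$ separately, whereas the paper computes only $\E[Y_{i_1j_1}Y_{i_2j_2}]$ and tacitly uses that the other term has the same expectation by exchangeability.
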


\begin{proof}
    The proof for $h_1$ is identical to that for $h_B$ in the proof of Lemma~\ref{lem:graphon_kernels}. For $h_2$,
    \begin{equation*}
        \begin{split}
            \E[h_2(Y_{(i_1, i_2; j_1, j_2)})] &= \E[\E[Y_{i_1j_1}  Y_{i_2j_2} \mid \boldsymbol{\xi}, \boldsymbol{\eta}]] \\
            &= \E[\E[Y_{i_1j_1} \mid \boldsymbol{\xi}, \boldsymbol{\eta}]\E[Y_{i_2j_2} \mid \boldsymbol{\xi}, \boldsymbol{\eta}]] \\
            &= \E[\lambda \times \lambda] \\
            &= \lambda^2.
        \end{split}
    \end{equation*}
\end{proof}

\section{Computation times for variance estimators}
\label{app:computation_times}

This section contains Tables~\ref{tab:comp_time_h1} and~\ref{tab:comp_time_h2} reporting the computation times for Algorithms A, B and C of~\ref{sub:simu_variance} for estimating the asymptotic variance of $U^{h_1}_N$ and $U^{h_2}_N$. 

As expected, Algorithm A is much slower than the proposed approach of this paper (Algorithm B and C). Notably, Algorithm C outspeeds Algorithm B by efficiently leveraging matrix operations, which are highly optimized in most computing libraries. While Algorithms A and B exhibit similar computational costs between estimating $V^{h_1}$ and $V^{h_2}$, Algorithm C is approximately twice as fast for $V^{h_1}$ compared to $V^{h_2}$. This reflects the fact that the matrix operations required for computing $U^{h_1}_N$ are simpler than for computing $U^{h_2}_N$. 

\begin{table}[tb!]
\centering
\begin{tabular}{ l c c c }
 \toprule
 $N$ & Algorithm A & Algorithm B & Algorithm C \\
 \midrule
 8 & $1.92 \times 10^{-2}$ & $1.24 \times 10^{-3}$ & $3.52 \times 10^{-4}$ \\
 11 & $5.49 \times 10^{-1}$ & $2.75 \times 10^{-3}$ & $3.23 \times 10^{-4}$ \\
 16 &  $7.82$ & $1.08 \times 10^{-2}$ & $4.48 \times 10^{-4}$ \\
 22 & $1.56 \times 10^{2}$ & $4.54 \times 10^{-2}$ & $6.33 \times 10^{-4}$ \\
 32 & - & $1.90 \times 10^{-1}$ & $8.19 \times 10^{-4}$ \\
 45 & - & $8.19 \times 10^{-1}$ & $1.27 \times 10^{-3}$ \\
 64 & - & $3.45$ & $2.46 \times 10^{-3}$ \\
 90 & - & $1.24 \times 10^{1}$ & $5.63 \times 10^{-3}$ \\
 128 & - & $2.52 \times 10^{1}$ & $1.66 \times 10^{-2}$ \\
 \bottomrule
\end{tabular}
\caption{Average time in seconds for estimating $V^{h_1}$ with the three algorithms.}
\label{tab:comp_time_h1}
\end{table}

\begin{table}[tb!]
\centering
\begin{tabular}{ l c c c }
 \toprule
 $N$ & Algorithm A & Algorithm B & Algorithm C \\
 \midrule
 8 & $3.40 \times 10^{-2}$ & $1.09 \times 10^{-3}$ & $ 4.63\times 10^{-4}$ \\
 11 & $6.48 \times 10^{-1}$ & $4.26 \times 10^{-3}$ & $5.26 \times 10^{-4}$ \\
 16 & $8.71$  & $1.15 \times 10^{-2}$ & $7.41 \times 10^{-4}$ \\
 22 & $1.25 \times 10^{2}$ & $4.57 \times 10^{-2}$ & $1.01 \times 10^{-3}$ \\
 32 & - & $1.94 \times 10^{-1}$ & $1.45 \times 10^{-3}$ \\
 45 & - & $8.99 \times 10^{-1}$ & $2.53 \times 10^{-3}$ \\
 64 & - & $3.47$ & $4.38 \times 10^{-3}$ \\
 90 & - & $1.47 \times 10^{1}$ & $1.05 \times 10^{-2}$ \\
 128 & - & $2.81 \times 10^{1}$ & $3.28 \times 10^{-2}$ \\
 \bottomrule
\end{tabular}
\caption{Average time in seconds for estimating $V^{h_2}$ with the three algorithms.}
\label{tab:comp_time_h2}
\end{table}

\end{appendix}

\end{document}